\newtheorem{theorem}{Theorem}
\newtheorem{conjecture}[theorem]{Conjecture}
\newtheorem{corollary}[theorem]{Corollary}
\newtheorem{lemma}[theorem]{Lemma}
\newtheorem{problem}{Problem}
\newenvironment{proof}[1][Proof.]{\begin{trivlist}
\item[\hskip \labelsep {\bfseries #1}]}{\end{trivlist}}
\newenvironment{acknowledgement}[1][Acknowledgement]{\begin{trivlist}
\item[\hskip \labelsep {\bfseries #1}]}{\end{trivlist}}
\newcommand{\AmS}{{\protect\the\textfont2
  A\kern-.1667em\lower.5ex\hbox{M}\kern-.125emS}}
\title{Interval cyclic edge-colorings of graphs}
\author{P.A. Petrosyan\address[MCSD]{Department of Informatics and Applied Mathematics,\\
Yerevan State University, 0025, Armenia}%
\address{Institute for Informatics and Automation Problems,\\
National Academy of Sciences, 0014, Armenia}%
\thanks {email: pet\_petros@ipia.sci.am},
        S.T. Mkhitaryan\addressmark[MCSD]%
\thanks{email: sargismk@ymail.com}}
\begin{document}

\maketitle

\begin{abstract}
A proper edge-coloring of a graph $G$ with colors $1,\ldots,t$ is
called an \emph{interval cyclic $t$-coloring} if all colors are
used, and the edges incident to each vertex $v\in V(G)$ are colored
by $d_{G}(v)$ consecutive colors modulo $t$, where $d_{G}(v)$ is the
degree of a vertex $v$ in $G$. A graph $G$ is \emph{interval
cyclically colorable} if it has an interval cyclic $t$-coloring for
some positive integer $t$. The set of all interval cyclically
colorable graphs is denoted by $\mathfrak{N}_{c}$. For a graph $G\in
\mathfrak{N}_{c}$, the least and the greatest values of $t$ for
which it has an interval cyclic $t$-coloring are denoted by
$w_{c}(G)$ and $W_{c}(G)$, respectively. In this paper we
investigate some properties of interval cyclic colorings. In
particular, we prove that if $G$ is a triangle-free graph with at
least two vertices and $G\in \mathfrak{N}_{c}$, then $W_{c}(G)\leq
\vert V(G)\vert +\Delta(G)-2$. We also obtain bounds on $w_{c}(G)$
and $W_{c}(G)$ for various classes of graphs. Finally, we
give some methods for constructing of interval cyclically non-colorable graphs.\\

Keywords: edge-coloring, interval coloring, interval cyclic
coloring, bipartite graph, complete graph.
\end{abstract}

\section{Introduction}\

All graphs considered in this paper are finite, undirected, and have
no loops or multiple edges. Let $V(G)$ and $E(G)$ denote the sets of
vertices and edges of $G$, respectively. For a graph $G$, the number
of connected components of $G$ is denoted by $c(G)$. A graph $G$ is
Eulerian if it has a closed trail containing every edge of $G$. The
degree of a vertex $v\in V(G)$ is denoted by $d_{G}(v)$ (or $d(v)$),
the maximum degree of $G$ by $\Delta(G)$, the diameter of $G$ by
$\mathrm{diam}(G)$, and the chromatic index of $G$ by
$\chi^{\prime}(G)$. The terms and concepts that we do not define can
be found in \cite{b3,b33}.

A proper edge-coloring of a graph $G$ with colors $1,\ldots ,t$ is
an \emph{interval $t$-coloring} if all colors are used, and the
colors of edges incident to each vertex of $G$ are form an interval
of integers. A graph $G$ is \emph{interval colorable} if it has an
interval $t$-coloring for some positive integer $t$. The concept of
interval edge-coloring of graphs was introduced by Asratian and
Kamalian \cite{b1}. In \cite{b1,b2}, the authors showed that if $G$
is interval colorable, then $\chi^{\prime}(G)=\Delta(G)$. In
\cite{b1,b2}, they also proved that if a triangle-free graph $G$ has
an interval $t$-coloring, then $t\leq \left\vert V(G)\right\vert
-1$. Later, Kamalian \cite{b13} showed that if $G$ admits an
interval $t$-coloring, then $t\leq 2\left\vert V(G)\right\vert -3$.
This upper bound was improved to $2\left\vert V(G)\right\vert -4$
for graphs $G$ with at least three vertices \cite{b8}. For an
$r$-regular graph $G$, Kamalian and Petrosyan \cite{b18} showed that
if $G$ with at least $2r+2$ vertices admits an interval
$t$-coloring, then $t\leq 2\left\vert V(G)\right\vert -5$. For a
planar graph $G$, Axenovich \cite{b4} showed that if $G$ has an
interval $t$-coloring, then $t\leq \frac{11}{6}\left\vert
V(G)\right\vert$. In \cite{b12,b13,b24,b26}, interval colorings of
complete graphs, complete bipartite graphs, trees, and
$n$-dimensional cubes were investigated. The $NP$-completeness of
the problem of the existence of an interval coloring of an arbitrary
bipartite graph was shown in \cite{b29}. In
\cite{b5,b6,b21,b25,b26,b27}, interval colorings of various products
of graphs were investigated. In
\cite{b2,b3,b6,b7,b9,b10,b11,b16,b17,b19}, the problem of the
existence and construction of interval colorings was considered, and
some bounds for the number of colors in such colorings of graphs
were given.

A proper edge-coloring of a graph $G$ with colors $1,\ldots,t$ is
called an \emph{interval cyclic $t$-coloring} if all colors are
used, and the edges incident to each vertex $v\in V(G)$ are colored
by $d_{G}(v)$ consecutive colors modulo $t$. A graph $G$ is
\emph{interval cyclically colorable} if it has an interval cyclic
$t$-coloring for some positive integer $t$. This type of
edge-coloring under the name of
\textquotedblleft$\pi$-coloring\textquotedblright was first
considered by Kotzig in \cite{b20}, where he proved that every cubic
graph has a $\pi$-coloring with 5 colors. However, the concept of
interval cyclic edge-coloring of graphs was explicitly introduced by
de Werra and Solot \cite{b32}. In \cite{b32}, they proved that if
$G$ is an outerplanar bipartite graph, then $G$ has an interval
cyclic $t$-coloring for any $t\geq \Delta(G)$. In \cite{b22}, Kubale
and Nadolski showed that the problem of determining whether a given
bipartite graph is interval cyclically colorable is $NP$-complete.
Later, Nadolski \cite{b23} showed that if $G$ is interval colorable,
then $G$ has an interval cyclic $\Delta(G)$-coloring. He also proved
that if $G$ is a connected graph with $\Delta(G)=3$, then $G$ has an
interval cyclic coloring with at most 4 colors. In \cite{b14,b15},
Kamalian investigated interval cyclic colorings of simple cycles and
trees. For simple cycles and trees, he determined all possible
values of $t$ for which these graphs have an interval cyclic
$t$-coloring.

In this paper we investigate some properties of interval cyclic
colorings. In particular, we prove that if a triangle-free graph $G$
with at least two vertices has an interval cyclic $t$-coloring, then
$t\leq \vert V(G)\vert +\Delta(G)-2$. For various classes of graphs,
we also obtain bounds on the least and the greatest values of $t$
for which these graphs have an interval cyclic $t$-coloring.
Finally, we describe some methods for constructing of interval
cyclically non-colorable graphs.

\section{Notations, definitions and auxiliary results}\

We use standard notations $C_{n},K_{n}$ and $Q_{n}$ for the simple
cycle, complete graph on $n$ vertices and the hypercube,
respectively. We also use standard notations $K_{m,n}$ and
$K_{l,m,n}$ for the complete bipartite and tripartite graph,
respectively, one part of which has $m$ vertices, other part has $n$
vertices and a third part has $l$ vertices.

A \emph{partial edge-coloring} of $G$ is a coloring of some of the
edges of $G$ such that no two adjacent edges receive the same color.
If $\alpha $ is a partial edge-coloring of $G$ and $v\in V(G)$, then
$S\left(v,\alpha \right)$ denotes the set of colors appearing on
colored edges incident to $v$.

A graph $G$ is \emph{interval colorable} if it has an interval
$t$-coloring for some positive integer $t$. The set of all interval
colorable graphs is denoted by $\mathfrak{N}$. For a graph $G\in
\mathfrak{N}$, the least and the greatest values of $t$ for which it
has an interval $t$-coloring are denoted by $w(G)$ and $W(G)$,
respectively.

A graph $G$ is \emph{interval cyclically colorable} if it has an
interval cyclic $t$-coloring for some positive integer $t$. The set
of all interval cyclically colorable graphs is denoted by
$\mathfrak{N}_{c}$. For a graph $G\in \mathfrak{N}_{c}$, the least
and the greatest values of $t$ for which it has an interval cyclic
$t$-coloring are denoted by $w_{c}(G)$ and $W_{c}(G)$, respectively.
The \emph{feasible set} $F(G)$ of a graph $G$ is the set of all
$t$'s such that there exists an interval cyclic $t$-coloring of $G$.
The feasible set of $G$ is \emph{gap-free} if
$F(G)=\left[w_{c}(G),W_{c}(G)\right]$. Clearly, if $G\in
\mathfrak{N}$, then $G\in \mathfrak{N}_{c}$ and
$\chi^{\prime}(G)\leq w_{c}(G)\leq w(G)\leq W(G)\leq W_{c}(G)\leq
\vert E(G)\vert$.

Let $\left\lfloor a\right\rfloor$ denote the largest integer less
than or equal to $a$. For two positive integers $a$ and $b$ with
$a\leq b$, we denote by $\left[a,b\right]$ the interval of integers
$\left\{a,\ldots,b\right\}$. By $\left[a,b\right]_{even}$
($\left[a,b\right]_{odd}$), we denote the set of all even (odd)
numbers from the interval $\left[a,b\right]$.\\

In \cite{b1,b2}, Asratian and Kamalian obtained the following two
results.

\begin{theorem}
\label{mytheorem1} If $G\in \mathfrak{N}$, then
$\chi^{\prime}(G)=\Delta(G)$. Moreover, if $G$ is a regular graph,
then $G\in \mathfrak{N}$ if and only if
$\chi^{\prime}(G)=\Delta(G)$.
\end{theorem}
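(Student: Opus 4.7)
The statement has two parts. For the first part ($G\in\mathfrak{N}\Rightarrow\chi'(G)=\Delta(G)$), the inequality $\chi'(G)\geq\Delta(G)$ is immediate from the definition of chromatic index, so the work lies in establishing $\chi'(G)\leq\Delta(G)$. The plan is to take an interval $t$-coloring $\alpha$ of $G$ and project it onto $\Delta(G)$ colors via reduction modulo $\Delta(G)$: define $\beta(e)=((\alpha(e)-1)\bmod\Delta(G))+1$ for every $e\in E(G)$. The key point is that the colors incident to any vertex $v$ form an interval of $d_G(v)\leq\Delta(G)$ consecutive integers, and any block of at most $\Delta(G)$ consecutive integers is pairwise incongruent modulo $\Delta(G)$; hence $\beta$ remains a proper edge-coloring, now using only the colors $1,\ldots,\Delta(G)$. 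This gives $\chi'(G)\leq\Delta(G)$ and combined with Vizing's lower bound yields equality.

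For the ``moreover'' part, the direction $G\in\mathfrak{N}\Rightarrow\chi'(G)=\Delta(G)$ is already covered. For the converse, assume $G$ is $r$-regular with $\chi'(G)=r$, and fix any proper $r$-edge-coloring $\gamma$. Since each vertex has degree $r$ and must see $r$ distinct colors among a total palette of only $r$ colors, we conclude that $S(v,\gamma)=\{1,2,\ldots,r\}$ for every $v\in V(G)$. In particular the color set at every vertex is the interval $[1,r]$, so $\gamma$ is already an interval $r$-coloring and $G\in\mathfrak{N}$.

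I do not expect a serious obstacle here: the first part rests entirely on the elementary observation that a window of length $\leq\Delta$ injects into $\mathbb{Z}/\Delta\mathbb{Z}$, and the regular case is forced by a counting argument on color classes (each is a perfect matching). The only mild subtlety to check in the first part is that all $\Delta(G)$ colors indeed appear in $\beta$ (so it is a genuine $\Delta(G)$-coloring rather than something with fewer colors), which follows because a maximum-degree vertex $v$ already sees $\Delta(G)$ distinct colors under $\beta$.
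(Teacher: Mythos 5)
Your proof is correct. Note that the paper does not actually prove this theorem --- it is quoted from Asratian and Kamalian \cite{b1,b2} without proof --- but your argument is the standard one for this result: reducing an interval coloring modulo $\Delta(G)$ (using that an interval of at most $\Delta(G)$ consecutive integers injects into $\mathbb{Z}/\Delta(G)\mathbb{Z}$) for the forward direction, and observing that in a proper $r$-edge-coloring of an $r$-regular graph every vertex sees the full palette $[1,r]$ for the converse. Both steps, including your check that all $\Delta(G)$ colors actually appear after the reduction, are sound.
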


\begin{theorem}
\label{mytheorem2} If $G$ is a connected triangle-free graph and
$G\in \mathfrak{N}$, then
\begin{center}
$W(G)\leq \vert V(G)\vert -1$.
\end{center}
\end{theorem}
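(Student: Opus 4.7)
The plan is to fix an interval $t$-coloring $\alpha$ of $G$ with $t=W(G)$, locate two vertices that carry the extreme colours, and then exploit the path between them. Since all of $1,\ldots,t$ are used, there exist $x,y\in V(G)$ with $1\in S(x,\alpha)$ and $t\in S(y,\alpha)$; the interval property forces $S(x,\alpha)=[1,d_{G}(x)]$ and $S(y,\alpha)=[t-d_{G}(y)+1,t]$. The one structural input from triangle-freeness that I would use throughout is that, for any edge $uv$ of $G$, the neighbourhoods $N_{G}(u)$ and $N_{G}(v)$ are disjoint, whence $d_{G}(u)+d_{G}(v)\leq|V(G)|$.

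Next, dispose of the easy cases. If $x=y$, then $t=d_{G}(x)\leq|V(G)|-1$. If $xy\in E(G)$, then $S(x,\alpha)\cup S(y,\alpha)$ is an interval (the two intervals share the colour of $xy$) that contains both $1$ and $t$, hence equals $[1,t]$; its length is at most $d_{G}(x)+d_{G}(y)-1\leq|V(G)|-1$, and $t\leq|V(G)|-1$ follows at once.

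In the remaining case I pass to a shortest $x$--$y$ path $x=u_{0},u_{1},\dots,u_{k}=y$ in $G$ and let $c_{i}$ denote the colour of $u_{i}u_{i+1}$. Because $S(u_{i},\alpha)$ is an interval of length $d_{G}(u_{i})$ containing both $c_{i-1}$ and $c_{i}$, we have $|c_{i}-c_{i-1}|\leq d_{G}(u_{i})-1$ at every inner vertex; combining this with $c_{0}\leq d_{G}(u_{0})$ and $c_{k-1}\geq t-d_{G}(u_{k})+1$ yields the key inequality
$$
t\;\leq\;\sum_{i=0}^{k}d_{G}(u_{i})-k.
$$

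The heart of the proof, and the step I expect to be the main obstacle, is upgrading the right-hand side to $|V(G)|-1$. Two ingredients are available: triangle-freeness forces $N_{G}(u_{i})\cap N_{G}(u_{i+1})=\emptyset$ for every consecutive pair, and the shortness of the path forces $u_{i}\not\sim u_{j}$ whenever $|i-j|\geq 2$, so each $N_{G}(u_{i})$ contains no path-vertex other than $u_{i\pm 1}$. The difficulty is that $N_{G}(u_{i})$ and $N_{G}(u_{i+2})$ can genuinely overlap in a triangle-free graph (for instance in $K_{2,3}$), so a naive "pair consecutive neighbourhoods and add" estimate is too weak. My plan to get around this is to pick $x$ and $y$ realising the distance between the sets $A=\{v\in V(G):1\in S(v,\alpha)\}$ and $B=\{v\in V(G):t\in S(v,\alpha)\}$; that choice forces every intermediate $u_{i}$ to lie outside $A\cup B$, and then a discharging count of $V(G)\setminus\{u_{0},\dots,u_{k}\}$ that charges each off-path neighbour to at most one path-index after the triangle-free pairing should squeeze out $\sum_{i=0}^{k}d_{G}(u_{i})-k\leq|V(G)|-1$, completing the proof.
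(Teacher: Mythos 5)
The paper itself does not prove this theorem --- it is imported from Asratian--Kamalian \cite{b1,b2} --- so there is no in-paper argument to compare against; I therefore assess your proposal on its own terms. Everything up to and including the reduction $t\le\sum_{i=0}^{k}d_{G}(u_{i})-k$ along an $x$--$y$ path is correct (the cases $x=y$ and $xy\in E(G)$ are also fine). But the final step, $\sum_{i=0}^{k}d_{G}(u_{i})-k\le|V(G)|-1$, is exactly where the whole difficulty of the theorem lives, and you have not supplied it: ``a discharging count \ldots\ should squeeze out'' the inequality is a plan, not a proof.

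The gap is genuine, not cosmetic. The purely structural inequality you are aiming for is false for shortest paths in triangle-free graphs: in $K_{2,3}$ with parts $\{a,b\}$ and $\{c,d,e\}$, the shortest path $c,a,d$ gives $2+3+2-2=5>4=|V(G)|-1$. Moreover, the arithmetic shows why no neighbourhood-counting argument can work unaided: the path vertices themselves already absorb $2k$ of the degree sum $\sum_{v\in V(G)}|N(v)\cap V(P)|$, so you would need $\sum_{v\notin V(P)}|N(v)\cap V(P)|\le|V(G)|-k-1=|V(G)\setminus V(P)|$, i.e., every off-path vertex adjacent to two path vertices must be offset by one adjacent to none --- and the vertex $b$ in the $K_{2,3}$ example already violates this. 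So the coloring-dependent hypotheses (that the interior vertices avoid colors $1$ and $t$, and that $x,y$ realize the distance between the sets $A$ and $B$) must enter the degree count in an essential way, and you give no mechanism for that. It is telling that this paper's own Theorems \ref{mytheorem8} and \ref{mytheorem9} use precisely your geodesic/degree-sum reduction and obtain only bounds of the shape $1+2\cdot\max_{P}\sum_{v\in V(P)}(d_{G}(v)-1)$: that technique by itself does not reach $|V(G)|-1$, and the Asratian--Kamalian proof requires an additional idea beyond summing degree increments along a path.
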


For general graphs, Kamalian proved the following

\begin{theorem}
\label{mytheorem3}(\cite{b13}). If $G$ is a connected graph with at
least two vertices and $G\in \mathfrak{N}$, then
\begin{center}
$W(G)\leq 2\vert V(G)\vert -3$.
\end{center}
\end{theorem}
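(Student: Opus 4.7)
The plan is to analyze a shortest path between a vertex witnessing color $1$ and one witnessing color $t$. For an interval $t$-coloring $\alpha$ and each $v\in V(G)$, write $S(v,\alpha)=[a(v),b(v)]$, so that $b(v)-a(v)=d_G(v)-1$. Since every color appears, one can pick $u,w\in V(G)$ with $a(u)=1$ and $b(w)=t$, and I would choose such a pair with $d_G(u,w)$ as small as possible. If $u=w$ then $[1,t]\subseteq S(u,\alpha)$, so $t\leq d_G(u)\leq |V(G)|-1\leq 2|V(G)|-3$ and we are done; otherwise let $u=u_0,u_1,\ldots,u_\ell=w$ be a shortest path, $\ell\geq 1$.

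The key local fact is that for each $i$ the edge $u_iu_{i+1}$ receives a color lying in $S(u_i,\alpha)\cap S(u_{i+1},\alpha)$, which forces $a(u_{i+1})\leq b(u_i)$ and hence $b(u_{i+1})\leq b(u_i)+d_G(u_{i+1})-1$. Telescoping from $b(u_0)=d_G(u)$ up to $b(u_\ell)=t$ yields the fundamental inequality
\[
t\leq \sum_{i=0}^{\ell}d_G(u_i)-\ell.
\]
When $\ell=1$ this immediately gives $t\leq d_G(u)+d_G(w)-1\leq 2(|V(G)|-1)-1=2|V(G)|-3$, settling the case of adjacent witnesses. For $\ell\geq 2$ I would estimate $\sum d_G(u_i)$ by counting edges incident to the path: since the path is shortest, no two nonconsecutive path vertices are adjacent and every vertex outside the path has at most three consecutive path-neighbors (a larger jump would shortcut the path), so $\sum d_G(u_i)=2\ell+e$ with $e\leq 3(|V(G)|-\ell-1)$.

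The main obstacle is that this generic count only delivers $t\leq 3|V(G)|-2\ell-3$, which is still too weak when $\ell$ is small relative to $|V(G)|$. To close the remaining gap I would exploit the minimality of $d_G(u,w)$: no interior $u_i$ can lie in $V_1\cup V_t$, so $a(u_i)\geq 2$ and $b(u_i)\leq t-1$ for $1\leq i\leq \ell-1$, and $u\not\sim w$ forces $d_G(u),d_G(w)\leq |V(G)|-2$. For $\ell=2$, applying the color-gap analysis at the common neighbor $u_1$ (any edge $uu_1$ uses color $\leq d_G(u)$ and any edge $wu_1$ uses color $\geq t-d_G(w)+1$, with both colors inside the interval $S(u_1,\alpha)$) refines the telescoped bound to $t\leq d_G(u)+d_G(u_1)+d_G(w)-2$; combining this with the restriction that every common neighbor $v$ of $u$ and $w$ must absorb the color gap between $S(u,\alpha)$ and $S(w,\alpha)$ inside its own interval should squeeze the total down to $2|V(G)|-3$, and the case $\ell\geq 3$ follows by iterating the same local argument through the interior of the path. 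I expect this last refinement—turning the crude degree count into the sharp $2|V(G)|-3$ estimate by systematically exploiting the interval constraints on vertices adjacent to the path—to be the most technical part of the argument.
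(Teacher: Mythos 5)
The paper itself offers no proof of Theorem~\ref{mytheorem3}; it is quoted from Kamalian's thesis \cite{b13}, so your argument can only be judged on its own terms. Your setup is sound: the identity $b(v)-a(v)=d_G(v)-1$, the local inequality $a(u_{i+1})\leq b(u_i)$ forced by the shared edge colour, and the telescoped bound $t\leq\sum_{i=0}^{\ell}d_G(u_i)-\ell$ along a shortest path are all correct, and the cases $u=w$ and $\ell=1$ do give $t\leq 2|V(G)|-3$. The problem is that everything from $\ell\geq 2$ onward --- which is the substance of the theorem --- is not actually proved. The degree count along a shortest path cannot be pushed to $2|V(G)|-3$: an off-path vertex may legitimately contribute $3$ to $\sum_{i}d_G(u_i)$, so the best purely combinatorial bound of this type is $t\leq 3|V(G)|-2\ell-3$, which you concede is too weak when $\ell<|V(G)|/2$. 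The ``refinement'' you propose for $\ell=2$ only re-derives the telescoped inequality $t\leq d_G(u)+d_G(u_1)+d_G(w)-2$ at a single common neighbour $u_1$; even with $d_G(u),d_G(w)\leq|V(G)|-2$ this is only $t\leq 3|V(G)|-7$, and the phrase ``should squeeze the total down to $2|V(G)|-3$'' is a hope, not an argument --- no mechanism is given for combining the constraints at the different common neighbours, and a graph in which $u$ and $w$ have many common neighbours shows that each such constraint individually is far too weak. The assertion that ``the case $\ell\geq 3$ follows by iterating the same local argument'' is likewise unsupported.

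The missing idea is that the bound $2|V(G)|-3$ is not obtained by charging colours to the degrees of a single path: the known proofs (Kamalian \cite{b13}; Giaro, Kubale and Ma{\l}afiejski \cite{b8} for the sharper $2|V(G)|-4$) instead track, as the colour index $i$ runs from $1$ to $t$, the sets of vertices whose palettes $[a(v),b(v)]$ have already opened, respectively already closed, and show that this pair of counters must advance often enough that each vertex can ``pay'' for at most two colours. Without some global counting of this kind --- in which every vertex of $G$, not just those near one shortest path, is charged --- your local path analysis cannot reach the claimed bound. As it stands the proof is incomplete at its central step.
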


Note that the upper bound in Theorem \ref{mytheorem3} is sharp for
$K_{2}$, but if $G\neq K_{2}$, then this upper bound can be
improved.

\begin{theorem}
\label{mytheorem4}(\cite{b8}). If $G$ is a connected graph with with
at least three vertices and $G\in \mathfrak{N}$, then
\begin{center}
$W(G)\leq 2\vert V(G)\vert -4$.
\end{center}
\end{theorem}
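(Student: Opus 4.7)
Write $n = \vert V(G)\vert$. By Theorem~\ref{mytheorem3} the bound $W(G) \leq 2n - 3$ is already known, so the plan is to prove by contradiction that $W(G) = 2n - 3$ is impossible when $n \geq 3$. Suppose $\alpha$ is an interval $t$-coloring of $G$ with $t = 2n - 3$. For each $v \in V(G)$ I would set $a(v) = \min S(v,\alpha)$ and $b(v) = \max S(v,\alpha) = a(v) + d_G(v) - 1$, and choose $v_1$ with $a(v_1) = 1$ and $v_t$ with $b(v_t) = t$ (such vertices exist because every color is used). The workhorse inequality comes from observing that for any edge $uu'$, its color lies in $S(u,\alpha) \cap S(u',\alpha)$, which gives $a(u') - a(u) \leq d_G(u) - 1$. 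Telescoping this along a shortest $v_1 v_t$-path $u_0 u_1 \cdots u_k$ would produce
\[
 t \;\leq\; \sum_{i=0}^{k} d_G(u_i) \;-\; k. \qquad (\ast)
\]

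I would then split into cases on $k$. For $k = 0$, $(\ast)$ gives $d_G(v_1) \geq 2n - 3 > n - 1$, impossible. For $k = 1$, $(\ast)$ forces $d_G(v_1) + d_G(v_t) \geq 2n - 2$, so both endpoints are universal, $S(v_1,\alpha) = [1,n-1]$, $S(v_t,\alpha) = [n-1,2n-3]$, and the unique common color $n-1$ must be used on $v_1 v_t$. Then each remaining vertex $w$ satisfies $\alpha(v_1 w) \in [1,n-2]$ and $\alpha(v_t w) \in [n,2n-3]$, and as $w$ varies these values realize permutations of $[1,n-2]$ and $[n,2n-3]$. Since $S(w,\alpha)$ is an interval containing both, $d_G(w) \geq \alpha(v_t w) - \alpha(v_1 w) + 1$; summing should yield $\sum_{w} d_G(w) \geq n(n-2)$, which exceeds the upper bound $(n-1)(n-2)$ coming from $d_G(w) \leq n-1$, a contradiction.

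The hard case will be $k \geq 2$. Here equality in $(\ast)$ would force the intervals $S(u_i,\alpha)$ to tile $[1,t]$ with consecutive intervals meeting in exactly one color, and $\alpha(u_i u_{i+1}) = b(u_i) = a(u_{i+1})$ for every $i$. Combining this rigidity with the structural consequences of $P$ being a shortest path---namely $P$ has no chord, and every vertex outside $P$ is adjacent to at most three consecutive $u_i$'s---my plan is to reduce to the already-contradicted case $k = 1$, by producing vertices $v_1'$, $v_t'$ with $a(v_1') = 1$, $b(v_t') = t$ that happen to be adjacent. This would follow, for example, by tracing the color-$1$ edge at $u_0$ and the color-$t$ edge at $u_k$ into the off-path region to locate an adjacent ``extreme'' pair, or by overcounting edges between $V(P)$ and $V(G)\setminus V(P)$ against the degree-sum lower bound $\sum d_G(u_i) \geq 2n - 3 + k$ implied by $(\ast)$. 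I expect this last step to be the main obstacle: the $k \leq 1$ subcases fall out almost directly from the telescoping inequality, whereas ruling out $k \geq 2$ requires a careful structural bookkeeping of how the tight tiling of $[1,t]$ interacts with the shortest-path window constraint on off-path vertices.
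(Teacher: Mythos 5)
The paper does not prove this theorem itself --- it is quoted from \cite{b8} --- so there is no in-paper argument to compare against; I can only assess your proposal on its own terms, and it has a genuine gap. Your telescoping inequality $t \leq \sum_{i=0}^{k} d_G(u_i) - k$ is correct, and your treatments of $k=0$ and $k=1$ are complete and sound (the $k=1$ count $\sum_{w} d_G(w) \geq n(n-2) > (n-1)(n-2)$ checks out). But $k \geq 2$ is the generic case, and what you offer there is not an argument but a wish list. Two specific problems. First, the ``rigidity'' you want to exploit --- that the palettes $S(u_i,\alpha)$ tile $[1,t]$ and $\alpha(u_iu_{i+1}) = b(u_i) = a(u_{i+1})$ --- only follows if $(\ast)$ holds with equality, and nothing forces equality: when $\sum_i d_G(u_i) - k > 2n-3$ the inequality $(\ast)$ yields no contradiction and no structure, so this subcase is simply untouched. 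Second, the proposed reduction to the $k=1$ case by finding an \emph{adjacent} pair $v_1', v_t'$ with $a(v_1')=1$ and $b(v_t')=t$ is unfounded; in an interval coloring the color-$1$ and color-$t$ edges are typically far apart (that distance is exactly what makes $t$ large), and neither of your two suggested mechanisms (tracing extreme-color edges off the path, or an edge count between $V(P)$ and its complement) is carried far enough to produce such a pair. Indeed, the naive count using ``each off-path vertex meets at most three $u_i$'' gives $\sum_i d_G(u_i) \leq 2k + 3(n-k-1)$, which combined with $(\ast)$ only yields $k \leq n/2$, not a contradiction.

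So the proof is incomplete precisely where the real work lies: squeezing the last unit out of Kamalian's $2n-3$ bound is the whole content of the Giaro--Kubale--Ma{\l}afiejski result, and that is the step you have deferred. To salvage the approach you would need either (a) an argument that handles the strict-inequality regime of $(\ast)$ for $k\geq 2$ (e.g.\ a sharper count of edges leaving $V(P)$ that uses the coloring, not just the shortest-path property), or (b) a different global argument in the spirit of \cite{b8}. As written, the proposal proves only the already-known bound $W(G)\leq 2\vert V(G)\vert-3$ together with two special cases of its sharpness analysis.
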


In \cite{b30}, Vizing proved the following well-known result.

\begin{theorem}
\label{mytheorem5} For every graph $G$,
\begin{center}
$\Delta(G)\leq \chi^{\prime}(G)\leq \Delta(G)+1$.
\end{center}
\end{theorem}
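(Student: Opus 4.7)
The lower bound $\Delta(G) \leq \chi'(G)$ is immediate: the $\Delta(G)$ edges incident to a maximum-degree vertex must all receive distinct colors in any proper edge-coloring, so at least $\Delta(G)$ colors are required.

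For the upper bound $\chi'(G) \leq \Delta(G) + 1$, the plan is induction on $|E(G)|$ using Vizing's fan argument. The base case $|E(G)|=0$ is trivial. For the inductive step, pick any edge $uv_0 \in E(G)$; by the inductive hypothesis, $G - uv_0$ admits a proper edge-coloring $\alpha$ with at most $\Delta(G)+1$ colors. The goal is to modify $\alpha$ so that $uv_0$ can be assigned a color without violating properness. Since we use $\Delta(G)+1$ colors and every vertex has degree at most $\Delta(G)$, at each vertex at least one color is missing from $S(v,\alpha)$.

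The key construction is a \emph{fan} at $u$: a maximal sequence $v_0, v_1, \ldots, v_k$ of distinct neighbors of $u$ such that for every $i \geq 1$, the color $\alpha(uv_i)$ is missing at $v_{i-1}$. Two cases arise. If some color is missing at $u$ and simultaneously at some $v_j$, then one can iteratively shift the colors along the fan (reassigning $uv_{i-1}$ the color of $uv_i$ for $i=1,\ldots,j$) and then color $uv_j$ with the common missing color. Otherwise, select a color $a$ missing at $u$ and a color $b$ missing at $v_k$, and consider the maximal path $P$ whose edges alternate between colors $a$ and $b$ starting at $v_k$. Swapping $a$ and $b$ along $P$ produces a new proper edge-coloring; a careful case analysis according to where $P$ ends (it cannot end at $u$, because that would force a contradiction with the fan's maximality) shows that after the swap, there is a vertex $v_j$ in the fan where $a$ becomes missing, and we can then apply the shifting argument of the first case.

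The main obstacle is the second case: one must verify carefully that the $ab$-alternating path does not terminate at $u$, and that after performing the Kempe swap the fan still behaves correctly so the shifting step yields a proper extension covering $uv_0$. This bookkeeping — keeping track of which colors are missing at each $v_i$ after the swap, and checking that every intermediate reassignment remains a proper edge-coloring — is the technical heart of Vizing's argument.
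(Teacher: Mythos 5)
The paper does not prove this statement at all --- it is Vizing's classical theorem, quoted from \cite{b30} and used as a black box --- so there is no internal proof to compare against. Your outline follows the standard fan-recoloring argument, which is the right (and essentially the only elementary) route. The lower bound and the setup of the induction, the existence of a missing color at every vertex, the definition of the maximal fan, and the easy case (a color missing simultaneously at $u$ and at some $v_j$, handled by shifting) are all correct.

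However, there is a genuine gap at exactly the point you flag as ``the technical heart,'' and the one concrete claim you do make there is wrong: the maximal $ab$-alternating path starting at $v_k$ \emph{can} terminate at $u$. Since $a$ is missing at $u$, such a path would enter $u$ along the unique $b$-colored edge at $u$, which (by maximality of the fan) is some fan edge $uv_j$; nothing about the fan's maximality forbids this. The standard resolution is precisely to handle this subcase: note that $b$ is missing at both $v_k$ and $v_{j-1}$ (the latter by the fan property, since $\alpha(uv_j)=b$), that $u$, $v_k$, $v_{j-1}$ are all endpoints of maximal $ab$-paths, and that a path component has only two ends --- hence the $ab$-path through $u$ can reach at most one of $v_k$ and $v_{j-1}$. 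One then performs the Kempe swap on the component containing whichever of the two the path from $u$ does \emph{not} reach, making $a$ missing there while keeping $a$ missing at $u$, and shifts the fan up to that vertex. One must also check that the swap does not disturb the fan property for the prefix being shifted. As written, your argument asserts a false exclusion in place of this case analysis and defers the remaining verification, so the upper bound is not actually established.
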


\begin{corollary}
\label{mycorollary1} If $G$ is a regular graph, then $G\in
\mathfrak{N}_{c}$ and $w_{c}(G)=\chi^{\prime}(G)$.
\end{corollary}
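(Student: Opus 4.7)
Write $r=\Delta(G)$, so that $G$ is $r$-regular. By Theorem~\ref{mytheorem5} (Vizing), $\chi^{\prime}(G)\in\{r,r+1\}$, and by the chain of inequalities recorded after the definition of $\mathfrak{N}_{c}$, any interval cyclic $t$-coloring is in particular a proper edge-coloring, so $t\geq \chi^{\prime}(G)$ whenever it exists. Hence once I exhibit \emph{any} interval cyclic $\chi^{\prime}(G)$-coloring of $G$, I will simultaneously have $G\in\mathfrak{N}_{c}$ and $w_{c}(G)=\chi^{\prime}(G)$.

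The plan is to take the candidate coloring to be any proper edge-coloring $\alpha$ of $G$ using exactly $t:=\chi^{\prime}(G)$ colors (one exists by definition of $\chi^{\prime}$) and to argue that regularity alone forces $\alpha$ to satisfy the interval cyclic condition. Indeed, at every vertex $v$ we have $|S(v,\alpha)|=d_{G}(v)=r$ distinct colors drawn from $[1,t]$, where $t\leq r+1$.

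The whole argument then reduces to a one-line modular remark, which I expect to be the only content of the proof: any $r$-subset of $[1,r+1]$ is a set of $r$ consecutive integers modulo $r+1$. More precisely, if $t=r$ then $S(v,\alpha)=[1,r]$, which is trivially an interval of $r$ consecutive colors modulo $r$; if $t=r+1$ then $S(v,\alpha)$ misses a unique color $c\in[1,r+1]$, and $S(v,\alpha)=\{c+1,c+2,\ldots,c+r\}\pmod{r+1}$ is an interval of $r$ consecutive colors modulo $r+1$. In either case the edges at $v$ are colored by $d_{G}(v)$ consecutive colors modulo $t$, so $\alpha$ is an interval cyclic $t$-coloring.

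There is no real obstacle here beyond spotting the modular remark above; the two cases $\chi^{\prime}(G)=r$ and $\chi^{\prime}(G)=r+1$ are handled uniformly by it, and combining the construction with the lower bound $w_{c}(G)\geq \chi^{\prime}(G)$ closes the proof.
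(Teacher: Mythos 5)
Your proof is correct and is exactly the intended derivation: the paper states this as an immediate corollary of Vizing's theorem (Theorem~\ref{mytheorem5}) without writing out the details, and your modular observation --- that any $r$-subset of $[1,r+1]$ is a cyclic interval modulo $r+1$, together with $w_{c}(G)\geq \chi^{\prime}(G)$ from the definition --- is precisely the missing content. Nothing to add.
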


From Theorems \ref{mytheorem1} and \ref{mytheorem5}, we get

\begin{corollary}
\label{mycorollary2} $\mathfrak{N}\subset \mathfrak{N}_{c}$.
\end{corollary}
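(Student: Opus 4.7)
The plan is to split the claim into two parts: the inclusion $\mathfrak{N} \subseteq \mathfrak{N}_c$ and the strictness of this inclusion. The first part is essentially the observation already made in the preamble to Corollary~\ref{mycorollary2}; the second part is where Theorems~\ref{mytheorem1} and~\ref{mytheorem5} are needed.

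For the inclusion, I would simply unwind the definitions. Suppose $G \in \mathfrak{N}$ admits an interval $t$-coloring $\alpha$. Then $t \geq \chi^{\prime}(G) \geq \Delta(G)$, and for every $v \in V(G)$ the set $S(v,\alpha)$ is an interval of $d_G(v)$ consecutive integers contained in $[1,t]$. Since $d_G(v) \leq \Delta(G) \leq t$, such an interval is also a block of $d_G(v)$ consecutive colors modulo $t$ (it does not wrap around), so $\alpha$ is already an interval cyclic $t$-coloring of $G$. Hence $G \in \mathfrak{N}_c$.

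For strictness, the plan is to exhibit a single graph in $\mathfrak{N}_c \setminus \mathfrak{N}$, and the natural place to look is among Class~2 regular graphs. By Theorem~\ref{mytheorem5}, every graph satisfies $\chi^{\prime}(G) \in \{\Delta(G),\Delta(G)+1\}$; by Theorem~\ref{mytheorem1}, a regular graph $G$ lies in $\mathfrak{N}$ if and only if $\chi^{\prime}(G) = \Delta(G)$. On the other hand, Corollary~\ref{mycorollary1} guarantees that every regular graph lies in $\mathfrak{N}_c$. Consequently, any regular graph with $\chi^{\prime}(G) = \Delta(G)+1$ witnesses strict inclusion. The simplest such witness is $K_3$: it is $2$-regular with $\chi^{\prime}(K_3) = 3 > 2 = \Delta(K_3)$, so $K_3 \in \mathfrak{N}_c \setminus \mathfrak{N}$.

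I do not anticipate any real obstacle here — the argument amounts to unpacking definitions for one direction and quoting Theorems~\ref{mytheorem1} and~\ref{mytheorem5} together with Corollary~\ref{mycorollary1} for the strictness. The only mild subtlety is noticing that the appropriate source of examples is precisely the Class~2 regular graphs, whose existence is immediate from $K_3$ (or, more generally, any odd cycle or any $K_{2k+1}$).
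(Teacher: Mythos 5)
Your argument is correct and is exactly the derivation the paper intends: the inclusion is the observation (already made in Section~2) that an interval $t$-coloring is an interval cyclic $t$-coloring, and strictness comes from a Class~2 regular graph such as $K_3$, which lies in $\mathfrak{N}_c$ by Corollary~\ref{mycorollary1} but not in $\mathfrak{N}$ by Theorems~\ref{mytheorem1} and~\ref{mytheorem5}. No issues.
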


Although all regular graphs are interval cyclically colorable, there
are many graphs that have no interval cyclic coloring. In Fig.
\ref{int-cyc-non graph}, we present the smallest known interval
cyclically non-colorable graph.

\begin{figure}[h]
\begin{center}
\includegraphics[width=10pc,height=17pc]{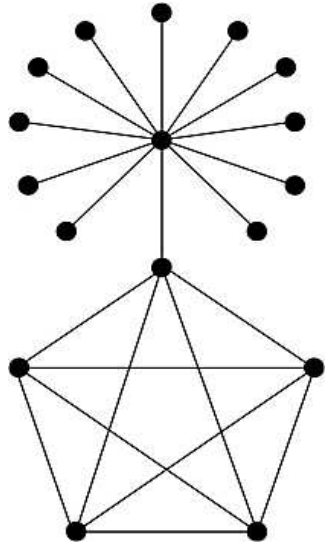}\\
\caption{The interval cyclically non-colorable
graph.}\label{int-cyc-non graph}
\end{center}
\end{figure}

We also need the generalizations of Theorems
\ref{mytheorem2},\ref{mytheorem3} and \ref{mytheorem4} for
disconnected graphs. It can be easily proved by induction on the
number of connected components that the following two lemmas hold.

\begin{lemma}
\label{mylemma1} If $G$ is a triangle-free graph and $G\in
\mathfrak{N}_{c}$, then
\begin{center}
$W(G)\leq \vert V(G)\vert -c(G)$.
\end{center}
\end{lemma}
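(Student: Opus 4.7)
The plan is to reduce the claim to Theorem \ref{mytheorem2} by first showing that in every interval edge-coloring of $G$ the colors used on each connected component themselves form an interval of integers, and then bounding each component separately. (I read the hypothesis as $G\in\mathfrak{N}$, which is what makes $W(G)$ meaningful and matches the role of the lemma as a disconnected version of Theorem \ref{mytheorem2}.)

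The key structural step is the following observation. Let $\alpha$ be an interval $W(G)$-coloring of $G$ and let $H$ be a connected component of $G$; I claim that $C(H):=\{\alpha(e): e\in E(H)\}$ is an interval. Suppose not: there exist $a<c<b$ with $a,b\in C(H)$ and $c\notin C(H)$. Since no edge of $H$ is colored $c$, no vertex $v\in V(H)$ has $c\in S(v,\alpha)$; each $S(v,\alpha)$, being an interval missing $c$, is therefore contained in $[1,c-1]$ or in $[c+1,W(G)]$. This partitions $V(H)$ into two classes with no edges between them (each edge sits inside the common class of its endpoints), and the edges colored $a$ and $b$ make both classes nonempty, contradicting the connectedness of $H$.

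With this observation, let $G_1,\dots,G_k$ be the connected components of $G$ and let $[a_i,b_i]$ be the interval of colors appearing on $G_i$ (empty when $G_i$ is an isolated vertex). Shifting turns the restriction of $\alpha$ to any non-trivial component $G_i$ into an interval $(b_i-a_i+1)$-coloring of the connected triangle-free graph $G_i$, so Theorem \ref{mytheorem2} gives $b_i-a_i+1\leq |V(G_i)|-1$. Because every color in $[1,W(G)]$ is used, $[1,W(G)]\subseteq \bigcup_i[a_i,b_i]$, whence $W(G)\leq \sum_i (b_i-a_i+1)$. Summing the componentwise bounds, and noting that isolated vertices contribute $0$ to this sum while contributing $1$ each to both $|V(G)|$ and $c(G)$, yields $W(G)\leq |V(G)|-c(G)$. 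The same argument is readily recast as an induction on $c(G)$ with base case $c(G)=1$ given by Theorem \ref{mytheorem2} and inductive step peeling off a component whose colors form an interval.

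The main difficulty is the componentwise interval observation; once it is established, everything else is bookkeeping on the total length of a family of intervals covering $[1,W(G)]$.
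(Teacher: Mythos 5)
Your proof is correct and matches the paper's approach: the paper merely asserts that the lemma ``can be easily proved by induction on the number of connected components'' from Theorem \ref{mytheorem2}, and your componentwise decomposition (via the observation that the colors appearing on each component form an interval, plus the bookkeeping for isolated vertices) is exactly the argument that induction requires. Your reading of the hypothesis as $G\in\mathfrak{N}$ is also the intended one, since $W(G)$ is only defined for interval colorable graphs and that is how the lemma is applied in Theorem \ref{mytheorem6}.
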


\begin{lemma}
\label{mylemma2} If $G$ is a graph with at least two vertices and
$G\in \mathfrak{N}_{c}$, then
\begin{center}
$W(G)\leq 2\vert V(G)\vert -3\cdot c(G)$.
\end{center}
Moreover, if $G$ has at least three vertices, then
\begin{center}
$W(G)\leq 2\vert V(G)\vert -4\cdot c(G)$.
\end{center}
\end{lemma}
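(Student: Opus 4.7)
The plan is to reduce the disconnected case to Theorems \ref{mytheorem3} and \ref{mytheorem4} component by component; the paper itself flags ``induction on the number of connected components'' and indeed, peeling off one component at a time is the natural way to organize it. Write $G_{1},\ldots,G_{c}$ for the connected components of $G$ (with $c=c(G)$), and fix an interval $t$-coloring $\phi$ of $G$ with $t=W(G)$. For $c=1$ the two claims are exactly Theorems \ref{mytheorem3} and \ref{mytheorem4}, so the point of the argument is to show that adding one component to a graph with $c-1$ components inflates each side of the inequality by at most $2|V(G_{1})|-3$ (respectively $2|V(G_{1})|-4$).

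The key auxiliary step, which I would establish first, is the following claim: for every interval $t$-coloring $\phi$ of any graph $H$ and every connected component $H_{i}$ of $H$, the set $C_{i}=\{\phi(e):e\in E(H_{i})\}$ is a contiguous interval $[a_{i},b_{i}]$ of integers. To see this, pick edges $e_{a},e_{b}\in E(H_{i})$ with $\phi(e_{a})=\min C_{i}$ and $\phi(e_{b})=\max C_{i}$, and link them by a sequence $e_{a}=f_{0},f_{1},\ldots,f_{k}=e_{b}$ in which consecutive edges share a vertex (such a sequence exists since $H_{i}$ is connected). At each shared vertex $v$ the colors appearing at $v$ form a contiguous interval containing both $\phi(f_{j})$ and $\phi(f_{j+1})$, so every integer between these two values appears at $v$ and therefore lies in $C_{i}$. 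A short walk argument on the integer line then shows that every $c\in[a_{i},b_{i}]$ is covered between two consecutive $\phi(f_{j})$, hence lies in $C_{i}$.

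Once the $C_{i}$ are known to be intervals, shifting the colors on $G_{i}$ by $-(a_{i}-1)$ turns the restriction of $\phi$ to $G_{i}$ into an interval $|C_{i}|$-coloring of $G_{i}$, so $|C_{i}|\leq W(G_{i})$. Since every color in $[1,t]$ is used by $\phi$ we have $\bigcup_{i}C_{i}=[1,t]$, and therefore
\[
t=|[1,t]|\leq \sum_{i=1}^{c}|C_{i}|\leq \sum_{i=1}^{c}W(G_{i}).
\]
Applying Theorem \ref{mytheorem3} to each $G_{i}$ (all of which have at least two vertices) gives $t\leq 2|V(G)|-3c$, which is the first inequality. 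The main obstacle is the second inequality: Theorem \ref{mytheorem4} requires $|V(G_{i})|\geq 3$, which fails for a $K_{2}$-component. One must therefore either restrict the hypothesis to graphs in which every component has at least three vertices or else account for $K_{2}$-components separately via the bookkeeping identity $W(K_{2})=1=2|V(K_{2})|-3$, replacing one contribution of $-4$ by $-3$ for each such component. This bookkeeping is the only delicate point; the rest of the argument is the straightforward summation above.
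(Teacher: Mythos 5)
Your argument is correct and is essentially the proof the paper has in mind: the paper offers no details beyond the remark that the lemma follows ``by induction on the number of connected components,'' and the substance of that induction is exactly your auxiliary claim that under an interval coloring the colors appearing on a connected component form a contiguous interval, after which one sums Theorem~\ref{mytheorem3} (resp.\ Theorem~\ref{mytheorem4}) over the components. The difficulty you flag at the end is genuine, and it is a defect of the lemma as stated rather than of your proof: for $G=2K_{2}$ one has $W(G)=2$ while $2\vert V(G)\vert-4\cdot c(G)=0$, so the second inequality really does require every component to have at least three vertices (and likewise the first inequality requires every component to have at least two vertices --- adding an isolated vertex to $K_{2}$ already violates it). Your bookkeeping repair, charging $2\vert V(G_{i})\vert-3$ to each $K_{2}$-component, is the right fix, and the paper's only application of the lemma (in the proof of Theorem~\ref{mytheorem7}, where the bound is immediately relaxed to the form with $c(H)\geq 1$) is unaffected.
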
\

\bigskip

\section{Some general results}\label{part1}\

In this section we derive some upper bounds for $W_{c}(G)$ depending
on the number of vertices, degrees and diameter for connected
graphs, triangle-free graphs, and, in particular, for bipartite
graphs. Next we show that there are graphs $G$ for which
$w_{c}(G)>\chi^{\prime}(G)$. We also investigate the feasible sets
of interval cyclically colorable graphs. In particular, we show that
if $G$ is interval colorable, then
$\left[\Delta(G),W(G)\right]\subseteq F(G)$. On the other hand, we
also show that there are interval cyclically colorable graphs for
which feasible sets are not gap-free.\\

Our first two theorems give upper bounds for $W_{c}(G)$ depending on
the number of vertices and the maximum degree of the interval
cyclically colorable graph $G$.

\begin{theorem}
\label{mytheorem6} If $G$ is a connected triangle-free graph with at
least two vertices and $G\in \mathfrak{N}_{c}$, then $W_{c}(G)\leq
\vert V(G)\vert +\Delta(G)-2$.
\end{theorem}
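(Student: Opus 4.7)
My plan is to reduce any interval cyclic $t$-coloring $\alpha$ of $G$ to an ordinary interval $t$-coloring of a slightly larger triangle-free graph $G^{\prime}$, and then invoke Lemma~\ref{mylemma1}. If $t\leq |V(G)|-1$ the bound is immediate (since $\Delta(G)\geq 1$), so the case that really matters is $t\geq |V(G)|$.

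The first step is an averaging argument on the cycle of colors. Viewing $1,\ldots,t$ as arranged cyclically with $t$ ``transitions'' $(c,c+1\bmod t)$, each vertex $v$ occupies exactly $d_{G}(v)-1$ of these transitions through its cyclic interval, giving
\[
\sum_{v\in V(G)}\bigl(d_{G}(v)-1\bigr)\;=\;2|E(G)|-|V(G)|\;\leq\;(\Delta(G)-1)\,|V(G)|.
\]
Since $t\geq |V(G)|$, averaging over the $t$ transitions produces some $(c^{*},c^{*}+1\bmod t)$ lying inside at most $\Delta(G)-1$ cyclic intervals. I will cyclically shift the coloring so that this sparsest transition becomes the wrap-around transition $(t,1)$; the shifted coloring is still an interval cyclic $t$-coloring, and the set $W$ of vertices whose (shifted) interval contains both $t$ and $1$ then satisfies $|W|\leq \Delta(G)-1$.

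Next, for each $v\in W$, whose shifted interval has the form $[s,t]\cup [1,s+d_{G}(v)-1-t]$, I split $v$ into two vertices $v^{\prime},v^{\prime\prime}$ that inherit the edges of $v$ colored in $[s,t]$ and in $[1,s+d_{G}(v)-1-t]$, respectively. Because the interval truly wraps, both halves are non-empty and both copies are non-isolated. The resulting graph $G^{\prime}$ has $|V(G^{\prime})|=|V(G)|+|W|$, the inherited coloring is a proper interval $t$-coloring of $G^{\prime}$ (every vertex now sees a genuine, non-wrapping interval, and every color in $[1,t]$ is still used), and any triangle in $G^{\prime}$ would project to a triangle of $G$, so $G^{\prime}$ is triangle-free. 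Applying Lemma~\ref{mylemma1} to $G^{\prime}$ then yields
\[
t\;\leq\;|V(G^{\prime})|-c(G^{\prime})\;\leq\;|V(G)|+|W|-1\;\leq\;|V(G)|+\Delta(G)-2,
\]
as required.

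The step I expect to carry the whole argument is the averaging inequality producing $|W|\leq \Delta(G)-1$; once that is in hand, the shift-and-split construction together with the routine verifications of properness, interval-ness, triangle-freeness, and non-isolation of the two copies reduces the problem to the disconnected triangle-free interval bound in Lemma~\ref{mylemma1}, which does the rest of the work.
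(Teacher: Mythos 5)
Your argument is correct, but it reaches Lemma \ref{mylemma1} by a genuinely different reduction than the paper's. The paper keeps the vertex set fixed and sacrifices colors: every non-interval palette straddles the wrap-around and has the form $\{1,\ldots,k_{v}\}\cup\{W_{c}(G)-l_{v}+1,\ldots,W_{c}(G)\}$ with $l_{v}\leq d_{G}(v)-1$, so setting $l^{\star}=\max_{v}l_{v}\leq\Delta(G)-1$ and deleting all edges colored in the top $l^{\star}$ classes yields a spanning triangle-free subgraph $H$ with an ordinary interval $(W_{c}(G)-l^{\star})$-coloring; Lemma \ref{mylemma1} then gives $W_{c}(G)-l^{\star}\leq \vert V(G)\vert-1$. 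You instead keep all $t$ colors and pay in vertices: the averaging over the $t$ cyclic transitions (which is exactly where the hypothesis $t\geq\vert V(G)\vert$ of your nontrivial case enters) locates a cut point crossed by at most $\Delta(G)-1$ palettes, and splitting those vertices gives a triangle-free graph on $\vert V(G)\vert+\vert W\vert$ vertices carrying an ordinary interval $t$-coloring, so Lemma \ref{mylemma1} closes the argument the same way. Your extra ingredient is the averaging step, which the paper does not need because the bound $l^{\star}\leq\Delta(G)-1$ comes for free from the palette sizes; in exchange, your construction never discards a color class. The deferred verifications all go through: since $d_{G}(v)\leq\vert V(G)\vert-1<t$, every palette has fewer than $t$ colors, so ``contains both $t$ and $1$'' is indeed equivalent to ``wraps''; both halves of a split vertex are nonempty; and $v^{\prime}v^{\prime\prime}$ is never an edge of $G^{\prime}$, so any triangle of $G^{\prime}$ projects to a triangle of $G$.
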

\begin{proof}
Consider an interval cyclic $W_{c}(G)$-coloring $\alpha$ of $G$. If
for each $u\in V(G)$, $S(u,\alpha)$ is an interval of integers, then
$G\in \mathfrak{N}$ and $W_{c}(G)\leq W(G)\leq \vert V(G)\vert-1
\leq \vert V(G)\vert +\Delta(G)-2$, by Theorem \ref{mytheorem2} and
taking into account that $G$ is a connected triangle-free graph with
at least two vertices.

Now suppose that there exists $v_{0}\in V(G)$ such that
$S(v_{0},\alpha)$ is not an interval of integers. Since $\alpha$ is
an interval cyclic $W_{c}(G)$-coloring of $G$, for each $v\in V(G)$
such that $S(v,\alpha)$ is not an interval, there are colors $k_{v}$
and $l_{v}$ such that

\begin{center}
$S(v,\alpha)=\{1,\ldots,k_{v}\}\cup
\{W_{c}(G)-l_{v}+1,\ldots,W_{c}(G)\}$.
\end{center}

Let $l^{\star}=\max_{\begin{subarray}{1}
v\in V(G),\\
S(v,\alpha)~is~not~an~interval \end{subarray}} l_{v}$. Clearly,
$1\leq l^{\star}\leq \Delta(G)-1$. Define an auxiliary graph $H$ as
follows:

\begin{center}
$V(H)=V(G)$ and\\
$E(H)=E(G)\setminus \{e\colon\, e\in E(G)\wedge \alpha(e)\in
\{W_{c}(G)-l^{\star}+1,\ldots,W_{c}(G)\}\}$.
\end{center}

Clearly, $H$ is a spanning subgraph of $G$. Now let us consider the
restriction of the coloring $\alpha$ on the edges of subgraph $H$ of
$G$. Let $\alpha_{H}$ be this edge-coloring. It is easy to see that
$\alpha_{H}$ is an interval $(W_{c}(G)-l^{\star})$-coloring of $H$.
Moreover, since $H$ is a triangle-free graph and $H\in
\mathfrak{N}$, by Lemma \ref{mylemma1}, we have

\begin{center}
$W_{c}(G)-l^{\star}\leq W(H)\leq \vert V(H)\vert-c(H)\leq \vert
V(H)\vert-1=\vert V(G)\vert-1$.
\end{center}

This implies that $W_{c}(G)\leq \vert V(G)\vert+l^{\star}-1$. From
this and taking into account that $l^{\star}\leq \Delta(G)-1$, we
obtain $W_{c}(G)\leq \vert V(G)\vert +\Delta(G)-2$. ~$\square$
\end{proof}

\begin{corollary}
\label{mycorollary3} If $G$ is a connected triangle-free graph with
at least two vertices and $G\in \mathfrak{N}_{c}$, then
$W_{c}(G)\leq 2\vert V(G)\vert -3$. Moreover, if $G$ has at least
three vertices, then $W_{c}(G)\leq 2\vert V(G)\vert -4$.
\end{corollary}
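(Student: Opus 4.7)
The plan is to deduce the corollary directly from Theorem \ref{mytheorem6}, whose bound $W_{c}(G)\leq |V(G)|+\Delta(G)-2$ is already available. Since $G$ is connected with at least two vertices, we automatically have $\Delta(G)\leq |V(G)|-1$. Plugging this into Theorem \ref{mytheorem6} gives
\[
W_{c}(G)\;\leq\;|V(G)|+(|V(G)|-1)-2\;=\;2|V(G)|-3,
\]
which settles the first inequality with no extra work.

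For the sharper bound when $|V(G)|\geq 3$, the idea is to split on the value of $\Delta(G)$. If $\Delta(G)\leq |V(G)|-2$, then the same substitution into Theorem \ref{mytheorem6} immediately yields $W_{c}(G)\leq 2|V(G)|-4$. So the only case requiring extra thought is $\Delta(G)=|V(G)|-1$, i.e.\ some vertex $v$ is adjacent to every other vertex.

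The key observation in that remaining case is that triangle-freeness forces $G$ to be a star $K_{1,n}$ with $n=|V(G)|-1$: the neighbors of $v$ must form an independent set, and since they are all the other vertices of $G$, no further edges can exist. It therefore suffices to verify the bound directly for stars. In any interval cyclic $t$-coloring of $K_{1,n}$, the $n$ edges incident to the center must receive $n$ distinct consecutive colors modulo $t$, and since these are the only edges and every color in $[1,t]$ must be used, we obtain $t=n$. Hence $W_{c}(K_{1,n})=n=|V(G)|-1$, and the inequality $|V(G)|-1\leq 2|V(G)|-4$ holds precisely because $|V(G)|\geq 3$.

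There is no real obstacle here; the only subtlety is recognizing that in the extremal case $\Delta(G)=|V(G)|-1$, the triangle-free hypothesis pins down the graph exactly as a star, for which $W_{c}$ can be computed by hand.
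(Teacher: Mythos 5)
Your proof is correct. The paper states this corollary without proof, and the intended derivation is clearly the one you give for the first bound: substitute $\Delta(G)\leq \vert V(G)\vert-1$ into Theorem \ref{mytheorem6} to get $W_{c}(G)\leq 2\vert V(G)\vert-3$. For the ``moreover'' part you rightly observe that the same substitution is not enough, since a triangle-free graph on at least three vertices can still have $\Delta(G)=\vert V(G)\vert-1$ (namely a star), and you close that case correctly: triangle-freeness forces the neighborhood of a dominating vertex to be independent, so $G\cong K_{1,n}$, for which every interval cyclic $t$-coloring must have $t=n$ (the $n$ edges at the center need $n$ distinct colors, and there are only $n$ edges to realize all $t$ colors), whence $W_{c}(K_{1,n})=\vert V(G)\vert-1\leq 2\vert V(G)\vert-4$ precisely when $\vert V(G)\vert\geq 3$. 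This is a small but genuine extra step that the paper glosses over, and your write-up supplies it cleanly; no gaps.
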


Note that the upper bound in Theorem \ref{mytheorem6} is sharp for
simple cycles, since $W_{c}(C_{n})=n$.

\begin{theorem}
\label{mytheorem7} If $G$ is a connected graph with at least two
vertices and $G\in \mathfrak{N}_{c}$, then $W_{c}(G)\leq 2\vert
V(G)\vert +\Delta(G)-4$. Moreover, if $G$ has at least three
vertices, then $W_{c}(G)\leq 2\vert V(G)\vert +\Delta(G)-5$.
\end{theorem}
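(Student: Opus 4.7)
The plan is to mimic the proof of Theorem \ref{mytheorem6} almost line for line, replacing the triangle-free ingredient (Lemma \ref{mylemma1}, i.e.\ the $|V|-c$ bound) with its general-graph counterpart (Lemma \ref{mylemma2}, i.e.\ the $2|V|-3c$ and $2|V|-4c$ bounds). The key structural insight from Theorem \ref{mytheorem6} is that, given an interval cyclic $W_c(G)$-coloring $\alpha$, the ``wrap-around amount'' $l^{\star}$ at the worst vertex is at most $\Delta(G)-1$, and peeling off the top $l^{\star}$ color classes leaves a spanning subgraph that is genuinely interval-colored and thus subject to the classical bounds.

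More concretely, I would fix an interval cyclic $W_c(G)$-coloring $\alpha$ of $G$ and split into two cases. If $S(v,\alpha)$ is a genuine interval of integers for every $v\in V(G)$, then $G\in\mathfrak{N}$ and Theorems \ref{mytheorem3} and \ref{mytheorem4} give $W_c(G)\le W(G)\le 2|V(G)|-3$ (respectively $\le 2|V(G)|-4$ when $|V(G)|\ge 3$); since $\Delta(G)\ge 1$, both of these are stronger than what we need. Otherwise, some $S(v_0,\alpha)$ wraps around, and for each such vertex $v$ we write $S(v,\alpha)=\{1,\dots,k_v\}\cup\{W_c(G)-l_v+1,\dots,W_c(G)\}$ with $1\le l_v\le d_G(v)-1\le\Delta(G)-1$. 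Setting $l^{\star}=\max_v l_v$, I would define the spanning subgraph
\begin{center}
$H$ with $V(H)=V(G)$ and $E(H)=E(G)\setminus\{e:\alpha(e)\in\{W_c(G)-l^{\star}+1,\dots,W_c(G)\}\}$,
\end{center}
and observe, exactly as in Theorem \ref{mytheorem6}, that the restriction $\alpha_H$ is an interval $(W_c(G)-l^{\star})$-coloring of $H$ — the point being that for any wrap-around vertex $v$ we have $k_v\le W_c(G)-l_v\le W_c(G)-l^{\star}$, so its remaining palette $\{1,\dots,k_v\}$ is a proper interval, while for non-wrap-around vertices the palette simply gets truncated from above and stays an interval.

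With $H\in\mathfrak{N}$ in hand, I would invoke Lemma \ref{mylemma2} instead of Lemma \ref{mylemma1}. Since $|V(H)|=|V(G)|\ge 2$, the lemma yields $W_c(G)-l^{\star}\le W(H)\le 2|V(H)|-3\cdot c(H)\le 2|V(G)|-3$, and combining with $l^{\star}\le\Delta(G)-1$ gives $W_c(G)\le 2|V(G)|+\Delta(G)-4$. When $|V(G)|\ge 3$, the second clause of Lemma \ref{mylemma2} upgrades this to $W(H)\le 2|V(G)|-4$ and hence to $W_c(G)\le 2|V(G)|+\Delta(G)-5$.

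There is essentially no hard step here — the argument is a direct transcription of the proof of Theorem \ref{mytheorem6} with Lemma \ref{mylemma2} in place of Lemma \ref{mylemma1}. The only point demanding a bit of care is the check that $\alpha_H$ really is an interval coloring in which every color $1,\dots,W_c(G)-l^{\star}$ appears: the inequality $k_v\le W_c(G)-l^{\star}$ handles the wrap-around vertices, the disconnectedness of $H$ (and possible isolated vertices) is absorbed by the $c(H)\ge 1$ slack in Lemma \ref{mylemma2}, and since $\alpha$ used all of $\{1,\dots,W_c(G)\}$ and we only discarded the top $l^{\star}$ classes, the full palette $\{1,\dots,W_c(G)-l^{\star}\}$ survives in $H$.
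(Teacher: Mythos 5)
Your proposal is correct and follows the paper's own proof essentially line for line: the same case split on whether some $S(v,\alpha)$ wraps around, the same quantity $l^{\star}\le\Delta(G)-1$, the same spanning subgraph $H$ obtained by deleting the top $l^{\star}$ color classes, and the same application of Lemma \ref{mylemma2} in place of Lemma \ref{mylemma1}. The only blemish is the chain $k_v\le W_c(G)-l_v\le W_c(G)-l^{\star}$, whose second inequality is reversed (since $l_v\le l^{\star}$, one has $W_c(G)-l^{\star}\le W_c(G)-l_v$); this is harmless, because truncating the initial segment $\{1,\ldots,k_v\}$ from above still leaves an interval, so $\alpha_H$ is an interval coloring regardless.
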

\begin{proof}
Consider an interval cyclic $W_{c}(G)$-coloring $\alpha$ of $G$. If
for each $u\in V(G)$, $S(u,\alpha)$ is an interval of integers, then
$G\in \mathfrak{N}$ and $W_{c}(G)\leq W(G)\leq 2\vert V(G)\vert-3
\leq 2\vert V(G)\vert +\Delta(G)-4$, by Theorem \ref{mytheorem3} and
taking into account that $G$ is a connected graph with at least two
vertices. Moreover, if $G$ has at least three vertices, then
$W_{c}(G)\leq W(G)\leq 2\vert V(G)\vert-4 \leq 2\vert V(G)\vert
+\Delta(G)-5$, by Theorem \ref{mytheorem4}.

Now suppose that there exists $v_{0}\in V(G)$ such that
$S(v_{0},\alpha)$ is not an interval of integers. Since $\alpha$ is
an interval cyclic $W_{c}(G)$-coloring of $G$, for each $v\in V(G)$
such that $S(v,\alpha)$ is not an interval, there are colors $k_{v}$
and $l_{v}$ such that

\begin{center}
$S(v,\alpha)=\{1,\ldots,k_{v}\}\cup
\{W_{c}(G)-l_{v}+1,\ldots,W_{c}(G)\}$.
\end{center}

Let $l^{\star}=\max_{\begin{subarray}{1}
v\in V(G),\\
S(v,\alpha)~is~not~an~interval \end{subarray}} l_{v}$. Clearly,
$1\leq l^{\star}\leq \Delta(G)-1$. Define an auxiliary graph $H$ as
follows:

\begin{center}
$V(H)=V(G)$ and\\
$E(H)=E(G)\setminus \{e\colon\, e\in E(G)\wedge \alpha(e)\in
\{W_{c}(G)-l^{\star}+1,\ldots,W_{c}(G)\}\}$.
\end{center}

Clearly, $H$ is a spanning subgraph of $G$. Now let us consider the
restriction of the coloring $\alpha$ on the edges of the subgraph
$H$ of $G$. Let $\alpha_{H}$ be this edge-coloring. It is easy to
see that $\alpha_{H}$ is an interval $(W_{c}(G)-l^{\star})$-coloring
of $H$. Since $H\in \mathfrak{N}$, by Lemma \ref{mylemma2}, we have

\begin{center}
$W_{c}(G)-l^{\star}\leq W(H)\leq 2\vert V(H)\vert-3\cdot c(H)\leq
2\vert V(H)\vert-3=2\vert V(G)\vert-3$.
\end{center}

Moreover, if $G$ has at least three vertices, then

\begin{center}
$W_{c}(G)-l^{\star}\leq W(H)\leq 2\vert V(H)\vert-4\cdot c(H)\leq
2\vert V(H)\vert-4=2\vert V(G)\vert-4$.
\end{center}

This implies that $W_{c}(G)\leq 2\vert V(G)\vert+l^{\star}-3$. From
this and taking into account that $l^{\star}\leq \Delta(G)-1$, we
obtain $W_{c}(G)\leq 2\vert V(G)\vert +\Delta(G)-4$. Moreover, if
$G$ has at least three vertices, then $W_{c}(G)\leq 2\vert V(G)\vert
+\Delta(G)-5$. ~$\square$
\end{proof}

\begin{corollary}
\label{mycorollary4} If $G$ is a connected graph with at least two
vertices and $G\in \mathfrak{N}_{c}$, then $W_{c}(G)\leq 3\vert
V(G)\vert -5$. Moreover, if $G$ has at least three vertices, then
$W_{c}(G)\leq 3\vert V(G)\vert -6$.
\end{corollary}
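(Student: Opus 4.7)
The plan is to derive this corollary directly from Theorem~\ref{mytheorem7}, using only the elementary fact that in any simple graph the maximum degree satisfies $\Delta(G)\leq \vert V(G)\vert-1$ (each vertex has at most $\vert V(G)\vert-1$ neighbours).

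First, I would apply the first inequality of Theorem~\ref{mytheorem7}: since $G$ is a connected graph with at least two vertices and $G\in \mathfrak{N}_c$, we have $W_c(G)\leq 2\vert V(G)\vert+\Delta(G)-4$. Substituting the degree bound $\Delta(G)\leq \vert V(G)\vert-1$ then yields
\[
W_c(G)\leq 2\vert V(G)\vert+(\vert V(G)\vert-1)-4 = 3\vert V(G)\vert-5,
\]
which is the first claim.

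For the second claim, when $\vert V(G)\vert\geq 3$, Theorem~\ref{mytheorem7} gives the sharper inequality $W_c(G)\leq 2\vert V(G)\vert+\Delta(G)-5$. Using $\Delta(G)\leq \vert V(G)\vert-1$ in the same manner produces $W_c(G)\leq 3\vert V(G)\vert-6$.

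There is no real obstacle here: the whole proof is a one-line substitution into the bounds already established in Theorem~\ref{mytheorem7}, and no additional combinatorial reasoning about the interval cyclic coloring itself is needed.
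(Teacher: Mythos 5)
Your proof is correct and is exactly the derivation the paper intends: the corollary is stated without proof immediately after Theorem~\ref{mytheorem7}, and the only step needed is the substitution $\Delta(G)\leq\vert V(G)\vert-1$ into the two bounds of that theorem, which you carry out correctly for both cases.
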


Note that the first upper bound in Theorem \ref{mytheorem7} is sharp
for $K_{2}$ and the second upper bound is sharp for $K_{3}$, but we
strongly believe that these upper bounds can be improved. Next we
give some upper bounds for $W_{c}(G)$ depending on degrees and
diameter of the interval cyclically colorable connected graph $G$.

\begin{theorem}
\label{mytheorem8} If $G$ is a connected graph and $G\in
\mathfrak{N}_{c}$, then
\begin{center}
$W_{c}(G)\leq 1+2\cdot{\max\limits_{P\in
\mathbf{P}}}{\sum\limits_{v\in V(P)}}\left(d_{G}(v)-1\right)$,
\end{center}
where $\mathbf{P}$ is the set of all shortest paths in the graph
$G$.
\end{theorem}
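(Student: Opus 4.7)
The plan is to apply the triangle inequality for the cyclic-distance metric $d^{\mathrm{cyc}}$ on the color set $\{1,\dots,t\}$ (where $t=W_c(G)$) along a shortest path in $G$. Fix an interval cyclic $W_c(G)$-coloring $\alpha$. The local fact driving everything is that at every vertex $w$ the color set $S(w,\alpha)$ is a cyclic interval of length $d_G(w)$, so any two of its elements lie within cyclic distance at most $d_G(w)-1$.

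Because $\alpha$ uses all $t$ colors, I would pick edges $e_1, e_2 \in E(G)$ with $\alpha(e_1)=1$ and $\alpha(e_2)=\lfloor t/2\rfloor+1$; these two colors achieve the maximum possible cyclic distance $\lfloor t/2\rfloor$. Using that $G$ has no multiple edges, I choose endpoints $u$ of $e_1$ and $v$ of $e_2$ with $u\ne v$, and take a shortest $u$-$v$ path $P\colon u=v_0,v_1,\dots,v_k=v$ whose consecutive edges carry colors $c_1,\dots,c_k$. Setting $c_0=\alpha(e_1)$ and $c_{k+1}=\alpha(e_2)$, for each $i\in\{0,\dots,k\}$ the colors $c_i$ and $c_{i+1}$ both appear in $S(v_i,\alpha)$, which yields $d^{\mathrm{cyc}}(c_i,c_{i+1})\le d_G(v_i)-1$. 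Telescoping via the triangle inequality for $d^{\mathrm{cyc}}$ gives
\[
\lfloor t/2\rfloor \;=\; d^{\mathrm{cyc}}(c_0,c_{k+1}) \;\le\; \sum_{i=0}^{k}\bigl(d_G(v_i)-1\bigr) \;=\; \sum_{v\in V(P)}(d_G(v)-1).
\]
Since $P\in\mathbf{P}$, the right-hand side is at most $\max_{P\in\mathbf{P}}\sum_{v\in V(P)}(d_G(v)-1)$, and combining with $t\le 2\lfloor t/2\rfloor+1$ finishes the argument.

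The main obstacle is packaging the end-effects cleanly. One must check that endpoints $u\ne v$ can always be selected: otherwise every endpoint of $e_1$ would be an endpoint of $e_2$, forcing $e_1$ and $e_2$ to be parallel, which contradicts simplicity. One should also verify that if $e_1$ or $e_2$ coincides with an edge of $P$ the argument is unaffected, because the inequality $d^{\mathrm{cyc}}(c_0,c_1)\le d_G(v_0)-1$ needs only that both colors appear in $S(v_0,\alpha)$. These are routine verifications rather than substantive difficulties; the real content of the proof is the telescoping cyclic-distance inequality along a shortest path, which is exactly what turns the maximum over $\mathbf{P}$ on the right-hand side into a genuine upper bound on $W_c(G)$.
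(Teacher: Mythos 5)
Your proof is correct and is essentially the paper's argument: both proofs propagate the cyclic-interval constraint $S(v,\alpha)$ along a shortest path joining an edge of color $1$ to an edge of an extremal color, and sum the per-vertex slacks $d_G(v)-1$. Your packaging via the cyclic metric $d^{\mathrm{cyc}}$ and a telescoping triangle inequality is a cleaner (and direct, rather than by contradiction) way of expressing the paper's chain of ``either increases by at most $d_G(v_{i-1})-1$ or decreases by at most $d_G(v_{i-1})-1$'' alternatives, but the underlying idea is the same.
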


\begin{proof} Consider an interval cyclic $W_{c}(G)$-coloring
$\alpha $ of $G$. Let us show that $W_{c}(G)\leq
1+2\cdot{\max\limits_{P\in \mathbf{P}}}{\sum\limits_{v\in
V(P)}}\left(d_{G}(v)-1\right)$. Suppose, to the contrary, that
$W_{c}(G)> 1+2\cdot{\max\limits_{P\in \mathbf{P}}}{\sum\limits_{v\in
V(P)}}\left(d_{G}(v)-1\right)$. In the coloring $\alpha$ of $G$, we
consider the edges with colors $1$ and $2+{\max\limits_{P\in
\mathbf{P}}}{\sum\limits_{v\in V(P)}}\left(d_{G}(v)-1\right)$. Let
$e=u_{1}u_{2}, e^{\prime}=w_{1}w_{2}$ and $\alpha(e)=1,
\alpha(e^{\prime})=2+{\max\limits_{P\in
\mathbf{P}}}{\sum\limits_{v\in V(P)}}\left(d_{G}(v)-1\right)$.
Without loss of generality we may assume that a shortest path $P$
joining $e$ with $e^{\prime}$ joins $u_{1}$ with $w_{1}$, where

\begin{center}
$P=\left(v_{0},e_{1},v_{1},\ldots ,v_{i-1},e_{i},v_{i},\ldots
,v_{k-1},e_{k},v_{k}\right)$ and $v_{0}=u_{1}$, $v_{k}=w_{1}$.
\end{center}

Since $\alpha$ is an interval cyclic coloring of $G$, we have

\begin{center}
either $\alpha(e_{1})\leq d_{G}(v_{0})$ or $\alpha(e_{1})\geq
W_{c}(G)-d_{G}(v_{0})+2$,

either $\alpha(e_{2})\leq \alpha(e_{1})+d_{G}(v_{1})-1$ or
$\alpha(e_{2})\geq \alpha(e_{1})-d_{G}(v_{1})+1$,

$\cdots \cdots \cdots \cdots \cdots \cdots$

either $\alpha(e_{i})\leq \alpha(e_{i-1})+d_{G}(v_{i-1})-1$ or
$\alpha(e_{i})\geq \alpha(e_{i-1})-d_{G}(v_{i-1})+1$,

$\cdots \cdots \cdots \cdots \cdots \cdots$

either $\alpha(e_{k})\leq \alpha(e_{k-1})+d_{G}(v_{k-1})-1$ or
$\alpha(e_{k})\geq \alpha(e_{k-1})-d_{G}(v_{k-1})+1$.
\end{center}

Summing up these inequalities, we obtain

\begin{center}
either $\alpha(e_{k})\leq
1+{\sum\limits_{j=0}^{k-1}\left(d_{G}(v_{j})-1\right)}$ or
$\alpha(e_{k})\geq
W_{c}(G)+1-{\sum\limits_{j=0}^{k-1}\left(d_{G}(v_{j})-1\right)}$.
\end{center}

Hence, we have either
\begin{equation}\label{eq:1}
\alpha(e^{\prime})\leq \alpha(e_{k})+d_{G}(v_{k})-1\leq
1+{\sum\limits_{j=0}^{k}\left(d_{G}(v_{j})-1\right)}
\end{equation}
or
\begin{equation}\label{eq:2}
\alpha(e^{\prime})\geq \alpha(e_{k})-d_{G}(v_{k})+1\geq
W_{c}(G)+1-{\sum\limits_{j=0}^{k}\left(d_{G}(v_{j})-1\right)}.
\end{equation}

On the other hand, by (\ref{eq:1}), we obtain

$2+{\max\limits_{P\in \mathbf{P}}}{\sum\limits_{v\in
V(P)}}\left(d_{G}(v)-1\right)=\alpha(e^{\prime})\leq
1+{\sum\limits_{j=0}^{k}\left(d_{G}(v_{j})-1\right)}\leq
1+{\max\limits_{P\in \mathbf{P}}}{\sum\limits_{v\in
V(P)}}\left(d_{G}(v)-1\right)$, which is a contradiction.\\

Similarly, by (\ref{eq:2}), we obtain

$2+{\max\limits_{P\in \mathbf{P}}}{\sum\limits_{v\in
V(P)}}\left(d_{G}(v)-1\right)=\alpha(e^{\prime})\geq
W_{c}(G)+1-{\sum\limits_{j=0}^{k}\left(d_{G}(v_{j})-1\right)}\geq
W_{c}(G)+1-{\max\limits_{P\in \mathbf{P}}}{\sum\limits_{v\in
V(P)}}\left(d_{G}(v)-1\right)$ and thus $W_{c}(G)\leq
1+2\cdot{\max\limits_{P\in \mathbf{P}}}{\sum\limits_{v\in
V(P)}}\left(d_{G}(v)-1\right)$, which is a contradiction. ~$\square$
\end{proof}

\begin{corollary}
\label{mycorollary5}(\cite{b23}). If $G$ is a connected graph and
$G\in \mathfrak{N}_{c}$, then
\begin{center}
$W_{c}(G)\leq 1+2(\mathrm{diam}(G)+1)\left(\Delta(G)-1\right)$.
\end{center}
\end{corollary}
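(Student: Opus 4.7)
The plan is to derive the corollary directly from Theorem \ref{mytheorem8}, which already gives the bound
\[
W_{c}(G)\leq 1+2\cdot \max_{P\in \mathbf{P}}\sum_{v\in V(P)}(d_{G}(v)-1).
\]
So my only task is to replace the inner quantity $\max_{P\in \mathbf{P}}\sum_{v\in V(P)}(d_{G}(v)-1)$ by something depending on $\mathrm{diam}(G)$ and $\Delta(G)$.

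First I would observe that for any shortest path $P$ in $G$, the length of $P$ is at most $\mathrm{diam}(G)$, hence $|V(P)|\leq \mathrm{diam}(G)+1$. Next, each vertex of $G$ has degree at most $\Delta(G)$, so $d_{G}(v)-1\leq \Delta(G)-1$ for every $v\in V(P)$. Putting these two simple bounds together yields
\[
\sum_{v\in V(P)}(d_{G}(v)-1)\;\leq\; |V(P)|\cdot (\Delta(G)-1)\;\leq\; (\mathrm{diam}(G)+1)(\Delta(G)-1),
\]
uniformly over $P\in \mathbf{P}$. Taking the maximum over $\mathbf{P}$ preserves this inequality, and substituting into the bound from Theorem \ref{mytheorem8} gives
\[
W_{c}(G)\leq 1+2(\mathrm{diam}(G)+1)(\Delta(G)-1),
\]
as required.

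There is no real obstacle here: the corollary is a one-line weakening of Theorem \ref{mytheorem8}, obtained by discarding the structure of the individual degrees along the extremal shortest path and replacing them with the uniform upper bound $\Delta(G)$, together with the standard fact that a shortest path has at most $\mathrm{diam}(G)+1$ vertices.
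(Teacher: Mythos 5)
Your derivation is correct and is exactly the intended one: the paper states this corollary immediately after Theorem \ref{mytheorem8} without further proof precisely because it follows by bounding $|V(P)|\leq \mathrm{diam}(G)+1$ for a shortest path and $d_{G}(v)-1\leq \Delta(G)-1$ termwise. Nothing is missing.
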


\begin{theorem}
\label{mytheorem9} If $G$ is a connected bipartite graph and $G\in
\mathfrak{N}_{c}$, then
\begin{center}
$W_{c}(G)\leq 1+2\cdot \mathrm{diam}(G)\left(\Delta(G)-1\right)$.
\end{center}
\end{theorem}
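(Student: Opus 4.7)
The plan is to adapt the argument of Theorem~\ref{mytheorem8}, exploiting bipartiteness to cut one vertex off the shortest path used in the color-propagation estimate. Fix an interval cyclic $W_c(G)$-coloring $\alpha$ and assume, for contradiction, that $W_c(G) > 1 + 2\,\mathrm{diam}(G)(\Delta(G) - 1)$. Choose edges $e = u_1u_2$ with $\alpha(e) = 1$ and $e' = w_1w_2$ with $\alpha(e') = 2 + \mathrm{diam}(G)(\Delta(G) - 1)$; the standing hypothesis on $W_c(G)$ makes this color admissible and all colors in $[1, W_c(G)]$ appear.

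The crucial bipartite observation is this: because $u_1u_2$ is an edge in a bipartite graph, $u_1$ and $u_2$ lie in opposite parts, so for every vertex $w$ the distances $\mathrm{dist}(u_1, w)$ and $\mathrm{dist}(u_2, w)$ have opposite parities; being also distances to a pair of adjacent vertices they differ by at most $1$, hence by exactly $1$. Applied with $w = w_1$ this yields an endpoint $u \in \{u_1, u_2\}$ with $\mathrm{dist}(u, w_1) \leq \mathrm{diam}(G) - 1$. Let $P = (v_0, e_1, v_1, \ldots, e_k, v_k)$ be a shortest $u$-$w_1$ path, so $v_0 = u$, $v_k = w_1$, and $k \leq \mathrm{diam}(G) - 1$; in particular, $P$ contains at most $\mathrm{diam}(G)$ vertices.

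Following the proof of Theorem~\ref{mytheorem8} essentially verbatim, but starting from $\alpha(e) = 1$ at $v_0$ and ending at $\alpha(e')$ via the edge $e'$ incident to $v_k = w_1$, one obtains the dichotomy
\[
\alpha(e') \leq 1 + \sum_{j=0}^{k}(d_G(v_j) - 1) \quad\text{or}\quad \alpha(e') \geq W_c(G) + 1 - \sum_{j=0}^{k}(d_G(v_j) - 1).
\]
Since $P$ has at most $\mathrm{diam}(G)$ vertices, the sum is bounded by $\mathrm{diam}(G)(\Delta(G) - 1)$, so the first alternative gives $2 + \mathrm{diam}(G)(\Delta(G)-1) \leq 1 + \mathrm{diam}(G)(\Delta(G)-1)$, a contradiction, while the second gives $W_c(G) \leq 1 + 2\,\mathrm{diam}(G)(\Delta(G)-1)$, contradicting the assumption on $W_c(G)$.

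The main obstacle is making the bipartite saving in the second paragraph precise and then verifying that the chosen path is indeed usable as input to Theorem~\ref{mytheorem8} no matter which of $u_1, u_2$ turns out to be the closer endpoint to $w_1$; once this is arranged the color-transport chain applies without change and yields the improvement over Corollary~\ref{mycorollary5}, where the extra $(\Delta(G)-1)$ term accounts for the fact that in general graphs one has no parity argument to drop a vertex from the path.
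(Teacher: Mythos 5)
Your proposal is correct and follows essentially the same route as the paper: assume the bound fails, locate edges colored $1$ and $2+\mathrm{diam}(G)(\Delta(G)-1)$, use bipartiteness to find a connecting path of length at most $\mathrm{diam}(G)-1$, and propagate the cyclic-interval condition along it as in Theorem~\ref{mytheorem8} to reach a contradiction in both branches of the dichotomy. Your parity argument (distances from the two endpoints of an edge to any fixed vertex differ by exactly one, so the smaller is at most $\mathrm{diam}(G)-1$) is just a cleaner justification of the paper's assertion that some pair of endpoints of the two edges is at distance at most $\mathrm{diam}(G)-1$.
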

\begin{proof} Consider an interval cyclic
$W_{c}(G)$-coloring $\alpha $ of $G$. Let us show that $W_{c}(G)\leq
1+2\cdot \mathrm{diam}(G)\left(\Delta(G)-1\right)$. Suppose, to the
contrary, that $W_{c}(G)>1+2\cdot
\mathrm{diam}(G)\left(\Delta(G)-1\right)$. In the coloring $\alpha$
of $G$, we consider the edges with colors $1$ and
$2+\mathrm{diam}(G)\left(\Delta(G)-1\right)$. Let $e=u_{1}u_{2},
e^{\prime}=w_{1}w_{2}$ and $\alpha(e)=1,
\alpha(e^{\prime})=2+\mathrm{diam}(G)\left(\Delta(G)-1\right)$.
Since for any two edges in a bipartite graph $G$, some two of their
endpoints must be at a distance of at most $\mathrm{diam}(G)-1$ from
each other, we may assume that there is the path $P$ joining $e$ and
$e^{\prime}$ with the length is not greater than
$\mathrm{diam}(G)-1$. Also, we may assume that $P$ joining $e$ with
$e^{\prime}$ joins $u_{1}$ with $w_{1}$, where

\begin{center}
$P=\left(v_{0},e_{1},v_{1},\ldots ,v_{i-1},e_{i},v_{i},\ldots
,v_{k-1},e_{k},v_{k}\right)$ and $v_{0}=u_{1}$, $v_{k}=w_{1}$.
\end{center}

Since $\alpha$ is an interval cyclic coloring of $G$, for $1\leq
i\leq k$, we have

\begin{center}
either $\alpha(v_{i-1}v_{i})\leq
1+{\sum\limits_{j=0}^{i-1}\left(d_{G}(v_{j})-1\right)}$ or
$\alpha(v_{i-1}v_{i})\geq
W_{c}(G)+1-{\sum\limits_{j=0}^{i-1}\left(d_{G}(v_{j})-1\right)}$.
\end{center}

From this, we have either
\begin{equation}\label{eq:3}
\alpha(e^{\prime})\leq
1+{\sum\limits_{j=0}^{k}\left(d_{G}(v_{j})-1\right)}
\end{equation}
or
\begin{equation}\label{eq:4}
\alpha(e^{\prime})\geq
W_{c}(G)+1-{\sum\limits_{j=0}^{k}\left(d_{G}(v_{j})-1\right)}.
\end{equation}

On the other hand, by (\ref{eq:3}) and taking into account that
$k\leq \mathrm{diam}(G)-1$, we obtain

$2+\mathrm{diam}(G)\left(\Delta(G)-1\right)=\alpha(e^{\prime})\leq
1+{\sum\limits_{j=0}^{k}\left(d_{G}(v_{j})-1\right)}\leq
1+\mathrm{diam}(G)\left(\Delta(G)-1\right)$, which is a contradiction.\\

Similarly, by (\ref{eq:4}) and taking into account that $k\leq
\mathrm{diam}(G)-1$, we obtain

$2+\mathrm{diam}(G)\left(\Delta(G)-1\right)=\alpha(e^{\prime})\geq
W_{c}(G)+1-{\sum\limits_{j=0}^{k}\left(d_{G}(v_{j})-1\right)}\geq
W_{c}(G)+1-\mathrm{diam}(G)\left(\Delta(G)-1\right)$ and thus
$W_{c}(G)\leq 1+2\cdot \mathrm{diam}(G)\left(\Delta(G)-1\right)$,
which is a contradiction. ~$\square$
\end{proof}

Now we show that the coefficient 2 in the last upper bounds cannot
be improved.

\begin{theorem}
\label{mytheorem10} For any integers $d\geq 2$ and $n\geq 3$, there
exists a connected graph $G$ with $\Delta(G)=d$ and
$\mathrm{diam}(G)=\left\lfloor\frac{n}{2}\right\rfloor+2$ such that
$G\in \mathfrak{N}_{c}$ and $W_{c}(G)=n(d-1)$.
\end{theorem}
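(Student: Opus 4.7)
The plan is to exhibit an explicit witness family and then verify the four conditions separately. For $d \ge 3$, I would take $G$ to be the \emph{cycle with pendants}: start with $C_n$ on vertices $v_0,\ldots,v_{n-1}$ and, to each $v_i$, attach $d-2$ pendant leaves $u_{i,1},\ldots,u_{i,d-2}$. Then every cycle vertex has degree $d$ and every leaf has degree $1$, so $\Delta(G)=d$; the diameter is attained by a leaf-to-leaf path through the longest cycle arc, giving $\mathrm{diam}(G)=1+\lfloor n/2\rfloor+1=\lfloor n/2\rfloor+2$; and a direct edge count gives $|E(G)|=n+n(d-2)=n(d-1)$. The universal bound $W_c(G)\le|E(G)|$ (every color must be used on at least one edge) therefore already yields $W_c(G)\le n(d-1)$. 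The boundary case $d=2$ forces $G$ to be a path or cycle, and since $\lfloor n/2\rfloor+2=n$ holds only for $n\in\{3,4\}$, the only relevant sub-case reduces to $G=P_{n+1}$, which has $\Delta=2$, $\mathrm{diam}=n$ and $W_c=n$ by inspection.

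To match the lower bound I would produce an explicit interval cyclic $n(d-1)$-coloring $\alpha$. Writing $t=n(d-1)$, the rule
\begin{align*}
\alpha(v_i\,u_{i,j}) &= i(d-1)+j \qquad (1\le j\le d-2),\\
\alpha(v_i\,v_{(i+1)\bmod n}) &= (i+1)(d-1),
\end{align*}
(reading the color $n(d-1)$ as $t$ rather than $0$) is designed so that the set of colors at $v_i$ is the block $\{i(d-1),i(d-1)+1,\ldots,(i+1)(d-1)\}$ of $d$ cyclically consecutive residues modulo $t$. Each leaf, having a single incident edge, is trivially consecutive. A bookkeeping check then confirms that the colors $1,\ldots,n(d-1)$ are each assigned to exactly one edge, so $\alpha$ is an interval cyclic $t$-coloring and hence $W_c(G)\ge n(d-1)$.

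The one step to handle with care is the wrap-around at $v_0$: the edge $v_{n-1}v_0$ receives color $n(d-1)=t$, the pendant edges receive $1,\ldots,d-2$, and $v_0v_1$ receives $d-1$, so the color set at $v_0$ is $\{t,1,\ldots,d-1\}$, which is a block of $d$ consecutive residues modulo $t$. Beyond that, no step poses a real obstacle: the parameters $\Delta(G)$, $\mathrm{diam}(G)$, $|E(G)|$ are read off by inspection, the upper bound is the trivial $W_c\le|E|$, and the coloring is arranged so that the two cycle edges at each $v_i$ sit at the extremal positions of $v_i$'s block while the $d-2$ pendant edges fill the interior. A uniform shift by exactly $d-1$ at each successive cycle vertex is what makes the cyclic pattern close up consistently after $n$ steps, and this is the only delicate design choice in the whole argument.
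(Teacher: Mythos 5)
Your construction is exactly the paper's: the graph $G_{d,n}$ in the published proof is the cycle $C_n$ with $d-2$ pendant leaves at each cycle vertex, and the coloring there, $\alpha(v_iu^{(i)}_j)=(i-1)(d-1)+j$, $\alpha(v_iv_{i+1})=i(d-1)$, $\alpha(v_1v_n)=n(d-1)$, is your coloring up to a shift of index, with the same upper bound $W_c(G)\le|E(G)|=n(d-1)$. So for $d\ge 3$ your argument is correct and essentially identical to the paper's.

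One remark on the boundary case you singled out: for $d=2$ the theorem is actually problematic, and your attempted patch does not (and cannot) close it. A connected graph with $\Delta=2$ is a path or a cycle; a path $P_m$ has $W_c=\mathrm{diam}=m-1$, forcing $n=\lfloor n/2\rfloor+2$ (so only $n\in\{3,4\}$, as you note), while a cycle with $W_c=n$ must be $C_n$, whose diameter is $\lfloor n/2\rfloor$, not $\lfloor n/2\rfloor+2$. Hence for $d=2$ and $n\ge 5$ no witness exists at all. The paper silently uses $G_{2,n}=C_n$ and asserts the wrong diameter, so this is a defect of the statement rather than of your construction; but you should either restrict to $d\ge 3$ or flag the failure explicitly rather than suggest the sub-case "reduces to $P_{n+1}$."
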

\begin{proof} For the proof, we construct a graph $G_{d,n}$ that satisfies the
specified conditions. We define a graph $G_{d,n}$ as follows:
\begin{center}
$V(G_{d,n})=\{v_{1},\ldots,v_{n}\}\cup
\left\{u^{(i)}_{j}\colon\,1\leq i\leq
n,1\leq j\leq d-2\right\}$ and\\
$E(G_{d,n})=\{v_{i}v_{i+1}\colon\, 1\leq i\leq
n-1\}\cup\left\{v_{1}v_{n}\right\}\cup\left\{v_{i}u^{(i)}_{j}\colon\,1\leq
i\leq n,1\leq j\leq d-2\right\}$.
\end{center}

Clearly, $G_{d,n}$ is a connected graph with $\Delta(G_{d,n})=d$ and
$\mathrm{diam}(G_{d,n})=\left\lfloor\frac{n}{2}\right\rfloor+2$.

Let us show that $G_{d,n}$ has an interval cyclic $n(d-1)$-coloring.

Define an edge-coloring $\alpha$ of $G_{d,n}$ as follows:
\begin{description}
\item[(1)] for $1\leq i\leq n$ and $1\leq j\leq d-2$, let
\begin{center}
$\alpha\left(v_{i}u^{(i)}_{j}\right)=(i-1)(d-1)+j$;
\end{center}

\item[(2)] for $1\leq i\leq n-1$, let
\begin{center}
$\alpha\left(v_{i}v_{i+1}\right)=i(d-1)$ and
$\alpha\left(v_{1}v_{n}\right)=n(d-1)$.
\end{center}
\end{description}

It is easy to see that $\alpha$ is an interval cyclic
$n(d-1)$-coloring of $G_{d,n}$. This implies that $G_{d,n}\in
\mathfrak{N}_{c}$ and $W_{c}(G_{d,n})\geq n(d-1)$. On the other
hand, clearly $W_{c}(G_{d,n})\leq \vert E(G_{d,n})\vert=n(d-1)$ and
thus $W_{c}(G_{d,n})=n(d-1)$. ~$\square$
\end{proof}\

In the last part of the section we investigate the feasible sets of
interval cyclically colorable graphs.

\begin{theorem}
\label{mytheorem11} If $G\in\mathfrak{N}$, then $G\in
\mathfrak{N}_{c}$ and $\left[\Delta(G),W(G)\right]\subseteq F(G)$.
\end{theorem}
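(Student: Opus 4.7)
The plan is to take an interval $W(G)$-coloring of $G$ and, for every target $t$ in the range $[\Delta(G),W(G)]$, fold its color set modulo $t$ to obtain an interval cyclic $t$-coloring. Since $G\in \mathfrak{N}$, fix an interval $W(G)$-coloring $\beta$ of $G$; by Theorem \ref{mytheorem1}, $W(G)\geq w(G)\geq \chi^{\prime}(G)=\Delta(G)$, so the interval $[\Delta(G),W(G)]$ is nonempty. Now for each $t\in[\Delta(G),W(G)]$ define an edge-coloring $\alpha_t$ of $G$ by
\[
\alpha_t(e)=((\beta(e)-1)\bmod t)+1, \qquad e\in E(G).
\]
The goal is to show that $\alpha_t$ is an interval cyclic $t$-coloring.

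I would check the three required properties in order. First, for each vertex $v\in V(G)$, the set $S(v,\beta)$ is an interval of $d_G(v)$ consecutive integers, say $\{a,a+1,\ldots,a+d_G(v)-1\}\subseteq[1,W(G)]$. Since $d_G(v)\leq \Delta(G)\leq t$, the reduction $c\mapsto ((c-1)\bmod t)+1$ maps these $d_G(v)$ values to $d_G(v)$ distinct residues that are consecutive modulo $t$, giving exactly the cyclic-interval condition at $v$. Second, properness: if two edges $e,e'$ share the vertex $v$ then $\beta(e)$ and $\beta(e')$ lie in an interval of length $d_G(v)\leq t$, so $|\beta(e)-\beta(e')|\leq d_G(v)-1<t$, whence a nonzero multiple of $t$ cannot equal $\beta(e)-\beta(e')$ and consequently $\alpha_t(e)\neq \alpha_t(e')$. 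Third, every color in $[1,t]$ must be used: because $\beta$ is an interval $W(G)$-coloring, every color in $[1,W(G)]$ appears on some edge, and in particular colors $1,2,\ldots,t$ are each used by some edge of $G$; those edges retain the same colors under $\alpha_t$ (the map is the identity on $[1,t]$), so colors $1,\ldots,t$ all appear.

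Combining these three points, $\alpha_t$ is an interval cyclic $t$-coloring of $G$ for every $t\in[\Delta(G),W(G)]$, which simultaneously shows $G\in \mathfrak{N}_c$ (take e.g.\ $t=W(G)$, which gives $\alpha_t=\beta$ and recovers the original interval coloring as a cyclic one) and $[\Delta(G),W(G)]\subseteq F(G)$. There is no real obstacle here beyond being careful that the length of the color interval at each vertex is strictly less than $t$ so that folding mod $t$ neither collapses two colors at a common vertex nor destroys consecutiveness; this is exactly the content of the hypothesis $t\geq\Delta(G)$.
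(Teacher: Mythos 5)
Your proposal is correct and is essentially identical to the paper's proof: both take an interval $W(G)$-coloring and reduce its colors modulo $t$ (your formula $((\beta(e)-1)\bmod t)+1$ is exactly the paper's ``$\alpha(e)\pmod t$, or $t$ if that is $0$''). You merely spell out the verification that the paper leaves as ``easy to see,'' and your checks of properness, cyclic consecutiveness, and surjectivity onto $[1,t]$ are all sound.
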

\begin{proof} Since any interval $t$-coloring of $G$ is also
an interval cyclic $t$-coloring of $G$, we obtain that $G\in
\mathfrak{N}_{c}$.

Assume that $\Delta(G)\leq t\leq W(G)$. Let $\alpha$ be an interval
$W(G)$-coloring of $G$. Define an edge-coloring $\beta$ of $G$ as
follows: for every $e\in E(G)$, let
\begin{center}
$\beta(e)=\left\{
\begin{tabular}{ll}
$\alpha(e)\pmod{t}$, & if $\alpha(e)\pmod{t}\neq 0$,\\
$t$, & otherwise.\\
\end{tabular}%
\right.$
\end{center}

It is easy to see that $\beta$ is an interval cyclic $t$-coloring of
$G$. ~$\square$
\end{proof}

\begin{corollary}
\label{mycorollary6}(\cite{b23}). If $G\in\mathfrak{N}$, then $G\in
\mathfrak{N}_{c}$ and $w_{c}(G)=\Delta(G)$.
\end{corollary}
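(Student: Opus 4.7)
The plan is to derive this essentially as an immediate consequence of Theorem~\ref{mytheorem11} combined with the basic inequality chain for $w_c$ noted in the preliminaries.

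First I would observe that the inclusion $G\in\mathfrak{N}_c$ is already part of the conclusion of Theorem~\ref{mytheorem11}, so the content reduces to identifying $w_c(G)$ with $\Delta(G)$.

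For the upper bound $w_c(G)\leq\Delta(G)$, I would apply Theorem~\ref{mytheorem11}: since $\Delta(G)\leq W(G)$ (indeed $\Delta(G)=\chi^{\prime}(G)\leq w(G)\leq W(G)$ for $G\in\mathfrak{N}$ by Theorem~\ref{mytheorem1}), the value $t=\Delta(G)$ lies in the interval $[\Delta(G),W(G)]\subseteq F(G)$. Hence $G$ admits an interval cyclic $\Delta(G)$-coloring, which gives $w_c(G)\leq \Delta(G)$.

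For the matching lower bound $w_c(G)\geq \Delta(G)$, I would invoke the inequality $\chi^{\prime}(G)\leq w_c(G)$ stated in Section~2 (which holds because any interval cyclic coloring is in particular a proper edge-coloring), together with Theorem~\ref{mytheorem1}, which gives $\chi^{\prime}(G)=\Delta(G)$ whenever $G\in\mathfrak{N}$. Combining the two bounds yields $w_c(G)=\Delta(G)$. There is no real obstacle here; the corollary is simply the combination of Theorem~\ref{mytheorem11} (providing $\Delta(G)\in F(G)$) and Theorem~\ref{mytheorem1} (providing the trivial-looking but essential lower bound $\Delta(G)\leq w_c(G)$).
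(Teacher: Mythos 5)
Your proposal is correct and matches the paper's intent exactly: the corollary is stated as an immediate consequence of Theorem~\ref{mytheorem11} (which puts $\Delta(G)$ in $F(G)$, giving $w_{c}(G)\leq\Delta(G)$), combined with the chain $\Delta(G)=\chi^{\prime}(G)\leq w_{c}(G)$ from Theorem~\ref{mytheorem1} and the preliminaries. Nothing further is needed.
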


\begin{theorem}
\label{mytheorem12} If $G$ is an Eulerian graph and $\vert
E(G)\vert$ is odd, then $G$ has no interval cyclic $t$-coloring for
every even positive integer $t$.
\end{theorem}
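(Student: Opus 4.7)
The plan is to argue by a parity-counting contradiction based on the colors of edges modulo $2$. Suppose for contradiction that $G$ is Eulerian, $|E(G)|$ is odd, and $G$ admits an interval cyclic $t$-coloring $\alpha$ for some even $t$. Since $G$ is Eulerian, every vertex has even degree $d_G(v)$.

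The key observation is that at each vertex $v$, the colors $S(v,\alpha)$ form a set of $d_G(v)$ consecutive integers modulo $t$, and since $t$ is even, the parity alternates as we step by $1$ even across the wrap-around (because $t$ itself is even, $x+t$ and $x$ have the same parity). Consequently, among any $d_G(v)$ cyclically consecutive residues modulo the even number $t$, exactly half are odd and half are even. Since $d_G(v)$ is even, the number of odd colors at $v$ equals $d_G(v)/2$, and likewise for even colors.

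Now I would double-count the set $E_{\mathrm{odd}} = \{e \in E(G) : \alpha(e) \text{ is odd}\}$ by summing over vertices. Each odd-colored edge is incident to exactly two vertices, so
\begin{equation*}
2\,|E_{\mathrm{odd}}| \;=\; \sum_{v\in V(G)} \bigl|\{e\ni v : \alpha(e) \text{ odd}\}\bigr| \;=\; \sum_{v\in V(G)} \frac{d_G(v)}{2} \;=\; \frac{2|E(G)|}{2} \;=\; |E(G)|.
\end{equation*}
Hence $|E_{\mathrm{odd}}| = |E(G)|/2$, which is not an integer since $|E(G)|$ is odd. This contradiction completes the proof.

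There is really no hard step here; the only place to be careful is verifying that an interval of $d_G(v)$ consecutive residues modulo an even $t$ contains exactly $d_G(v)/2$ odd values even when the interval wraps around zero, which is immediate from the fact that $t$ itself is even (so wrapping from $t$ back to $1$ preserves the alternation of parities).
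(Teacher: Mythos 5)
Your proof is correct and is essentially identical to the paper's: both count the odd-colored edges at each vertex (exactly $d_G(v)/2$ of them, since $t$ is even and $d_G(v)$ is even), then apply the handshaking double-count to conclude $|E(G)|$ would have to be even. Your explicit check that the parity alternation survives the wrap-around modulo an even $t$ is a detail the paper leaves implicit, but the argument is the same.
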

\begin{proof}
Suppose, to the contrary, that $G$ has an interval cyclic
$t$-coloring $\alpha$ for some even positive integer $t$. Since $G$
is an Eulerian graph, $G$ is connected and $d_{G}(v)$ is even for
any $v\in V(G)$, by Euler's Theorem. Since $\alpha$ is an interval
cyclic coloring and all degrees of vertices of $G$ are even, we have
that for any $v\in V(G)$, the set $S\left(v,\alpha\right)$ contains
exactly $\frac{d_{G}(v)}{2}$ even colors and $\frac{d_{G}(v)}{2}$
odd colors. Now let $m_{odd}$ be the number of edges with odd colors
in the coloring $\alpha$. By Handshaking lemma, we obtain
$m_{odd}=\frac{1}{2}\sum\limits_{v\in
V(G)}\frac{d_{G}(v)}{2}=\frac{\vert E(G)\vert}{2}$. Thus $\vert
E(G)\vert$ is even, which is a contradiction. ~$\square$
\end{proof}

\begin{corollary}
\label{mycorollary7} If $G$ is an interval cyclically colorable
Eulerian graph with an odd number of edges and
$\chi^{\prime}(G)=\Delta(G)$, then $w_{c}(G)>\chi^{\prime}(G)$.
\end{corollary}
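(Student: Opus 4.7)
The plan is to derive this directly from Theorem~\ref{mytheorem12}, using only the parity observation that every vertex of an Eulerian graph has even degree.

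First I would observe that since $G$ is Eulerian, every vertex $v \in V(G)$ satisfies $d_G(v) \equiv 0 \pmod 2$, so in particular $\Delta(G)$ is even. Combined with the hypothesis $\chi^{\prime}(G)=\Delta(G)$, this shows that $\chi^{\prime}(G)$ itself is an even positive integer.

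Next I would invoke Theorem~\ref{mytheorem12}: since $G$ is Eulerian and $|E(G)|$ is odd, $G$ has no interval cyclic $t$-coloring for any even positive integer $t$. Taking $t=\chi^{\prime}(G)=\Delta(G)$, which is even by the previous step, we conclude that $G$ admits no interval cyclic $\chi^{\prime}(G)$-coloring.

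On the other hand, since every interval cyclic $t$-coloring is by definition a proper edge-coloring with $t$ colors, any $t\in F(G)$ satisfies $t\geq \chi^{\prime}(G)$; hence $w_c(G)\geq \chi^{\prime}(G)$. Since equality would contradict the previous paragraph, we must have $w_c(G)>\chi^{\prime}(G)$, as desired. There is no real obstacle here: the only point worth flagging is the parity remark that $\Delta(G)$ is even for Eulerian graphs, which is what allows Theorem~\ref{mytheorem12} to rule out the value $t=\chi^{\prime}(G)$.
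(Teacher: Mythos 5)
Your proof is correct and is precisely the derivation the paper intends (it states the corollary immediately after Theorem~\ref{mytheorem12} without further argument): the parity observation that Eulerian graphs have all degrees even, hence $\chi^{\prime}(G)=\Delta(G)$ is even, combined with Theorem~\ref{mytheorem12} and the trivial bound $w_{c}(G)\geq\chi^{\prime}(G)$, is exactly the point. Nothing is missing.
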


\begin{figure}[h]
\begin{center}
\includegraphics[width=25pc,height=10pc]{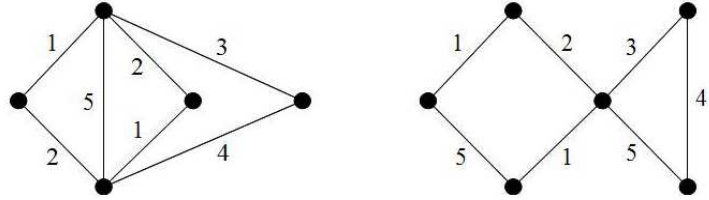}\\
\caption{Interval cyclic $5$-colorings of $K_{1,1,3}$ and
$F$.}\label{K_{1,1,3} and Fish graph}
\end{center}
\end{figure}

Fig. \ref{K_{1,1,3} and Fish graph} shows the complete tripartite
graph $K_{1,1,3}$ and the fish graph $F$ that are smallest interval
cyclically colorable Eulerian graphs with an odd number of edges and
for which the chromatic index is equal to the maximum degree.
Clearly, $\chi^{\prime}(K_{1,1,3})=\chi^{\prime}(F)=4$, but
$w_{c}(K_{1,1,3})=w_{c}(F)=5$.

Finally let us consider the simple path $P_{m}$ and the simple cycle
$C_{n}$ ($n\geq 3$). Clearly, $P_{m}\in\mathfrak{N}_{c}$ and
$w_{c}(P_{m})=\Delta(P_{m})$, $W_{c}(P_{m})=m-1$. Moreover,
$F(P_{m})=\left[w_{c}(P_{m}),W_{c}(P_{m})\right]$, so the feasible
set of $P_{m}$ is gap-free. Now let us consider the simple cycle
$C_{n}$. Clearly, $C_{n}\in\mathfrak{N}_{c}$ and
$w_{c}(C_{n})=\chi^{\prime}(C_{n})$, $W_{c}(C_{n})=n$. Moreover, it
is not difficult to see that any simple cycle with an odd number of
vertices has an interval cyclic $t$-coloring for every odd integer
$t$, $3\leq t\leq n$, so, by Theorem \ref{mytheorem12}, we obtain

\begin{corollary}
\label{mycorollary8} For any odd integer $n\geq 3$, we have
$F(C_{n})=\left[3,n\right]_{odd}$.
\end{corollary}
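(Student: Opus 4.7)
The plan is to establish $F(C_n) \subseteq [3,n]_{\mathrm{odd}}$ from the results already proven, and then to give an explicit construction showing every odd value in $[3,n]$ is attained.

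For the upper containment, recall that the paragraph preceding the corollary establishes $w_c(C_n) = \chi'(C_n) = 3$ (for $n$ odd) and $W_c(C_n) = n$, so $F(C_n) \subseteq [3,n]$. Next, observe that $C_n$ is connected and $2$-regular, hence Eulerian, and $|E(C_n)| = n$ is odd. Theorem~\ref{mytheorem12} therefore forbids every even $t$ from lying in $F(C_n)$, which combined with the previous inclusion gives $F(C_n) \subseteq [3,n]_{\mathrm{odd}}$.

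For the reverse inclusion, fix an odd integer $t$ with $3 \leq t \leq n$. Label the edges of $C_n$ cyclically as $e_0, e_1, \ldots, e_{n-1}$ so that $e_{i-1}$ and $e_i$ share a vertex for every $i$ (indices mod $n$). I would define a coloring $\alpha$ by setting $\alpha(e_i) = i+1$ for $0 \leq i \leq t-1$, and then, for the remaining $n-t$ edges (which is a nonnegative even number since $n$ and $t$ are both odd), alternating $\alpha(e_{t}) = t-1,\ \alpha(e_{t+1}) = t,\ \alpha(e_{t+2}) = t-1,\ldots$, ending with $\alpha(e_{n-1}) = t$.

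It remains to verify that $\alpha$ is an interval cyclic $t$-coloring. All $t$ colors appear in the initial segment $e_0,\ldots,e_{t-1}$. For propriety, consecutive edges $e_{i-1}, e_i$ with $1 \leq i \leq t-1$ receive colors $i$ and $i+1$; at $i = t$ they receive $t$ and $t-1$; in the alternating tail they receive $\{t-1,t\}$; and at the wrap-around $e_{n-1}, e_0$ they receive $t$ and $1$, all distinct. For the cyclic interval condition at each internal vertex, the two incident colors are consecutive integers or the pair $\{t-1,t\}$, hence consecutive in $\mathbb{Z}/t\mathbb{Z}$; at the wrap-around vertex the pair $\{t,1\}$ is consecutive mod $t$. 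Thus $t \in F(C_n)$ for every odd $t \in [3,n]$, completing the proof.

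The main conceptual point is simply recognizing that Theorem~\ref{mytheorem12} eliminates the even values; the construction itself is routine once one notices that after a monotone segment of length $t$, any remaining even number of edges can be absorbed by oscillating between two adjacent colors.
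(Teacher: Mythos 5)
Your proposal is correct and takes essentially the same route as the paper: Theorem \ref{mytheorem12} eliminates the even values of $t$, and each odd $t\in[3,n]$ is realized by an explicit coloring (a monotone segment of length $t$ followed by an even-length tail alternating between two adjacent colors), which is exactly the construction the paper leaves implicit with ``it is not difficult to see.'' Your verification of propriety and of the cyclic interval condition at every degree-$2$ vertex, including the wrap-around pair $\{t,1\}$, is sound.
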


This corollary implies that for any odd integer $n\geq 5$, the
feasible set of $C_{n}$ is not gap-free. A more general result on
the feasible set of simple cycles was obtained by Kamalian in
\cite{b15}.

\begin{theorem}
\label{mytheorem13}(\cite{b15}). For any integer $n\geq 3$, we have
\begin{center}
$F(C_{n})=\left\{
\begin{tabular}{ll}
$\left[3,n\right]_{odd}$, & if $n$ is odd,\\
$\left[2,\frac{n}{2}+1\right]\cup \left[\frac{n}{2}+2,n\right]_{even}$, & if $n=4k,k\in\mathbb{N}$,\\
$\left[2,\frac{n}{2}+1\right]\cup \left[\frac{n}{2}+3,n\right]_{even}$, & if $n=4k+2,k\in\mathbb{N}$.\\
\end{tabular}%
\right.$
\end{center}
\end{theorem}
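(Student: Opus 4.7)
The plan is to separately handle odd $n$ using Corollary~\ref{mycorollary8} (which gives $F(C_n)=[3,n]_{odd}$), and to derive the even-$n$ case by reducing it to a cyclic walk problem on $\mathbb{Z}/t\mathbb{Z}$. Label the edges of $C_n$ cyclically as $e_0,\ldots,e_{n-1}$; an interval cyclic $t$-coloring is then a sequence $c_0,c_1,\ldots,c_{n-1}\in\{1,\ldots,t\}$ with $c_i-c_{i-1}\equiv\pm1\pmod t$ for every $i$ (indices mod $n$) such that all $t$ colors appear. Lifting to $\mathbb{Z}$ gives a walk $\tilde c_0,\ldots,\tilde c_n$ with $\pm1$ steps and $\tilde c_n-\tilde c_0=kt$ for an integer winding number $k$; writing $p,q$ for the numbers of $+1$ and $-1$ steps yields $p+q=n$ and $p-q=kt$. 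Since $p+q$ and $p-q$ share parity, $kt$ must be even---this is the key parity constraint driving the classification.

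For even $n$ the upper bound on $t$ splits on $k$. If $k=0$, then $p=q=n/2$; I will show that the lifted walk's range satisfies $M-\mu\le n/2$, hence at most $n/2+1$ distinct colors appear. The argument picks indices $i_M,i_\mu$ where the max $M$ and min $\mu$ are attained, splits the cyclic walk into the two arcs of lengths $\ell_1,\ell_2$ with $\ell_1+\ell_2=n$, and notes that each arc must contain at least $M-\mu$ steps (since per-step height changes by at most $1$). Hence $n\ge 2(M-\mu)$, so $t\le n/2+1$. If $k\ne 0$, then $|k|t\le n$; when $t$ is odd the parity constraint forces $k$ even, giving $|k|\ge 2$ and $t\le n/2$, whereas when $t$ is even, $|k|=1$ is admissible and gives $t\le n$. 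Together, the only candidates for $F(C_n)$ (even $n$) lie in $[2,n/2+1]\cup\{t\text{ even}:n/2+1<t\le n\}$.

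For realizability I construct explicit colorings. For $t=2$ I use alternation on the even cycle. For $3\le t\le n/2+1$ I take the $k=0$ walk that climbs $1,2,\ldots,t$, oscillates between $t-1$ and $t$ for exactly $n-2t+2$ steps (a nonnegative even number), then descends $t-1,t-2,\ldots,2$, closing back to $c_0=1$ via a legal $-1$ step and using every color. For even $t$ with $n/2+1<t\le n$ I take the $k=1$ walk whose lift climbs from $1$ up to $1+(n+t)/2$ in $(n+t)/2$ steps and then descends by $(n-t)/2$ steps to end at $1+t$; since $(n+t)/2+1\ge t$, the lifted interval already covers all residues mod $t$, so every color is used. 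Finally, separating by the parity of $n/2+1$ (i.e., $n\equiv 0\pmod 4$ vs.\ $n\equiv 2\pmod 4$) rewrites the admissible set as the two stated cases $[2,n/2+1]\cup[n/2+2,n]_{even}$ and $[2,n/2+1]\cup[n/2+3,n]_{even}$.

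The main obstacle is proving the range bound $M-\mu\le n/2$ cleanly in the $k=0$ case, since it requires a careful cyclic split at the extremes (handling possible repetitions of max/min values and correct accounting of arc lengths). Everything else is routine: checking that the constructed walks have length exactly $n$, respect the $\pm 1\pmod t$ constraint at every step including the closure, and use every color, together with the bookkeeping to transcribe the admissible set into the two given formulas.
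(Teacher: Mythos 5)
Your proposal is correct, but note that the paper itself contains no proof of Theorem~\ref{mytheorem13}: the result is quoted from Kamalian~\cite{b15}, and only the odd case (Corollary~\ref{mycorollary8}) is derived in the text, via Theorem~\ref{mytheorem12} plus an asserted construction. So your argument has to stand on its own, and it does. The reduction of an interval cyclic $t$-coloring of $C_n$ to a closed $\pm 1$-walk on $\mathbb{Z}/t\mathbb{Z}$, the lift to $\mathbb{Z}$ with winding number $k$ satisfying $p-q=kt$, $p+q=n$, and the resulting parity constraint $kt\equiv n\pmod 2$ are exactly the mechanism that produces the $[\,\cdot\,]_{even}$ truncation; the $k=0$ two-arc bound $M-\mu\le n/2$ (hence $t\le M-\mu+1\le n/2+1$, since all $t$ residues must appear among $M-\mu+1$ consecutive heights) and the $k\ne 0$ bound $|k|t\le n$ with $k$ forced even when $t$ is odd together give precisely the candidate set $[2,n/2+1]\cup\{t\ \text{even}:\ n/2+1<t\le n\}$, which transcribes into the two displayed cases according to the parity of $n/2+1$. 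Your three constructions (alternation for $t=2$; climb--oscillate--descend of total length $(t-1)+(n-2t+2)+(t-2)+1=n$ for $3\le t\le n/2+1$; the winding-number-one climb for even $t\le n$, whose $(n+t)/2+1\ge t$ consecutive lifted values cover all residues) all check out. This is in the same spirit as the telescoping path arguments the paper uses in Theorems~\ref{mytheorem8} and~\ref{mytheorem9}, with the winding number and parity bookkeeping as the extra ingredient. One point to make explicit in a write-up: the equivalence of ``two consecutive colors mod $t$ at each vertex'' with ``$c_i-c_{i-1}\equiv\pm 1\pmod t$'' degenerates at $t=2$ (where $+1\equiv-1$, so the lift is not canonical); this is harmless because you handle $t=2$ by direct construction and $t=2$ is never excluded by your upper bounds.
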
\

\section{Interval cyclic edge-colorings of complete
graphs}\label{part2}\

This section is devoted to interval cyclic colorings of complete
graphs. In \cite{b31}, Vizing proved the following

\begin{theorem}
\label{mytheorem14} For the complete graph $K_{n}$ ($n\geq 2$), we
have
\begin{center}
$\chi^{\prime}(K_{n})=\left\{
\begin{tabular}{ll}
$n-1$, & if $n$ is even, \\
$n$, & if $n$ is odd. \\
\end{tabular}%
\right.$
\end{center}
\end{theorem}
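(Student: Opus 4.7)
The plan is to handle the two parities of $n$ separately, using Theorem \ref{mytheorem5} to shrink the work: since $\Delta(K_{n})=n-1$, Vizing's bound already gives $\chi^{\prime}(K_{n})\in\{n-1,n\}$, so for each parity it suffices to decide which of these two values is correct.

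For $n$ even, I would construct an explicit $1$-factorization of $K_{n}$ witnessing $\chi^{\prime}(K_{n})\leq n-1$. Designate one vertex as $\infty$ and identify the remaining $n-1$ vertices with the elements of $\mathbb{Z}_{n-1}$. For each color $c\in\{0,1,\ldots,n-2\}$, define the $c$-th color class to consist of the edge $\{\infty,c\}$ together with all edges $\{c-j,c+j\}$ for $j=1,\ldots,(n-2)/2$, where the arithmetic is modulo $n-1$. A short direct check shows that each color class is a perfect matching of $K_{n}$ and every edge of $K_{n}$ is used in exactly one class, so this is a proper $(n-1)$-edge-coloring; combined with $\chi^{\prime}(K_{n})\geq\Delta(K_{n})=n-1$, this gives $\chi^{\prime}(K_{n})=n-1$.

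For $n$ odd, the lower bound $\chi^{\prime}(K_{n})\geq n$ follows from a counting argument: since $n$ is odd, every matching in $K_{n}$ has at most $(n-1)/2$ edges, so a proper edge-coloring of $K_{n}$ with $k$ colors covers at most $k(n-1)/2$ edges. The inequality $k(n-1)/2\geq |E(K_{n})|=n(n-1)/2$ forces $k\geq n$. For the matching upper bound, I would embed $K_{n}$ as a subgraph of $K_{n+1}$; since $n+1$ is even, the previous case supplies a proper $n$-edge-coloring of $K_{n+1}$, and restricting it to $E(K_{n})\subset E(K_{n+1})$ yields a proper $n$-edge-coloring of $K_{n}$, establishing $\chi^{\prime}(K_{n})=n$.

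The main obstacle is the verification that the $1$-factorization in the even case is well-defined and proper: one needs to check that for each fixed $c$ the edges $\{c-j,c+j\}$ with $1\leq j\leq (n-2)/2$, together with $\{\infty,c\}$, are pairwise vertex-disjoint and cover every vertex of $K_{n}$ exactly once, and that varying $c$ over $\mathbb{Z}_{n-1}$ partitions the edges of $K_{n}$. Both checks reduce to routine arithmetic in $\mathbb{Z}_{n-1}$, using that $n-1$ is odd, so no element equals its own negative modulo $n-1$.
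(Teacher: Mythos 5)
Your proof is correct. Note that the paper does not actually prove this statement at all: Theorem \ref{mytheorem14} is quoted as a known result of Vizing, with a citation to \cite{b31} in place of a proof. What you have written is the classical self-contained argument: the rotational $1$-factorization of $K_{n}$ for even $n$ (vertex $\infty$ plus $\mathbb{Z}_{n-1}$, color class $c$ consisting of $\{\infty,c\}$ and the edges $\{c-j,c+j\}$), the matching-size counting bound $k\cdot\frac{n-1}{2}\geq\binom{n}{2}$ forcing $\chi^{\prime}(K_{n})\geq n$ for odd $n$, and the embedding of $K_{n}$ into $K_{n+1}$ for the odd upper bound. All three steps check out; in particular the verification that each color class is a perfect matching and that the classes partition $E(K_{n})$ reduces, as you say, to the invertibility of $2$ modulo the odd number $n-1$, so that $c=(a+b)/2$ determines the unique class containing $\{a,b\}$. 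The only remark worth making is that your appeal to Theorem \ref{mytheorem5} is not actually needed: your even-case construction and the counting argument plus embedding in the odd case already pin down the exact value without invoking Vizing's general upper bound.
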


From Corollary \ref{mycorollary1} and Theorem \ref{mytheorem14}, we
obtain that if $n\in \mathbb{N}$, then $K_{2n},K_{2n+1}\in
\mathfrak{N}_{c}$ and $w_{c}(K_{2n})=2n-1,w_{c}(K_{2n+1})=2n+1$. Now
let us consider the parameters $W_{c}(K_{2n})$ and $W_{c}(K_{2n+1})$
when $n\in \mathbb{N}$. In \cite{b24}, Petrosyan investigated
interval colorings of complete graphs and hypercubes. In particular,
he proved the following

\begin{theorem}
\label{mytheorem15} If $n=p2^{q}$, where $p$ is odd and $q$ is
nonnegative, then
\begin{center}
$W(K_{2n})\geq 4n-2-p-q$.
\end{center}
Moreover, if $2n-1\leq t\leq 4n-2-p-q$, then $K_{2n}$ has an
interval $t$-coloring.
\end{theorem}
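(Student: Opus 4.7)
The plan is to prove the stronger ``moreover'' clause, which immediately implies $W(K_{2n})\geq 4n-2-p-q$: it suffices to exhibit, for every $t$ with $2n-1\leq t\leq 4n-2-p-q$, an explicit interval $t$-coloring of $K_{2n}$. I would proceed by induction on the $2$-adic valuation $q$ of $n$.

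In the base case $q=0$, with $n=p$ odd and target range $[2p-1, 3p-2]$, the lower endpoint $t=2p-1=\chi'(K_{2p})$ is realized by the round-robin 1-factorization of $K_{2p}$, which is trivially an interval coloring because every vertex is incident to every color. To obtain interval $t$-colorings for $t\in\{2p,\ldots,3p-2\}$, I would build on Walecki's Hamiltonian decomposition of $K_{2p}$ (into $p-1$ Hamilton cycles plus one perfect matching, which is possible since $p$ is odd). Each Hamilton cycle can be recolored as a Hamilton path by deleting one edge and assigning its $2p-1$ edges consecutive new colors; combining such an ``unfolding'' of one or several cycles (together with a careful choice of which edge to delete) extends the color range while preserving at every vertex the property that the colors seen form an interval. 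The parity of $p$ is what keeps the unfoldings compatible at the two endpoints of each path.

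For the inductive step, assume the statement for $n$ and prove it for $2n$, whose parameters are $p'=p$, $q'=q+1$, so that the target range becomes $[4n-1, 8n-3-p-q]$. I would partition $V(K_{4n})=A\sqcup B$ with $|A|=|B|=2n$, place the inductive interval coloring on $K_{2n}[A]$ using colors from an initial block $\{1,\ldots,t\}$, place a shifted copy on $K_{2n}[B]$ using a terminal block of colors, and fill $K_{2n,2n}(A,B)$ with an interval coloring of the complete bipartite graph (Kamalian's result gives interval $r$-colorings of $K_{2n,2n}$ for every $r\in[2n, 4n-1]$). By jointly varying the number of colors $t$ used on the interior copies in $[2n-1, 4n-2-p-q]$ and the number of colors $r$ used in the bipartite filling in $[2n, 4n-1]$, the full target range for $K_{4n}$ is covered.

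The principal obstacle is the compatibility of the three colorings at every vertex: the interval of colors used by the bipartite coloring at each $v\in A$ (resp.\ $u\in B$) must abut the interval used by the interior coloring at $v$ (resp.\ $u$), so that at every vertex the union is a single interval of length $4n-1$. This forces the interior coloring of $K_{2n}$ to have a regular ``shape'' function—namely, its smallest-color map $f:V(K_{2n})\to\mathbb{Z}$ must be realizable as the shape function of an interval coloring of $K_{2n,2n}$ after an appropriate shift. Maintaining this structural regularity across levels of the induction is where the ``$-p-q$'' correction is captured, and is the technical heart of the argument; a secondary but nontrivial obstacle is verifying the interval property at every vertex of $K_{2p}$ for the Walecki-based constructions in the base case.
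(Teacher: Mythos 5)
This theorem is not proved in the paper at all; it is imported from \cite{b24}, so your proposal has to be judged against that external proof. Your inductive step is essentially the right skeleton and matches Petrosyan's argument: decompose $K_{4n}$ into two copies of $K_{2n}$ plus $K_{2n,2n}$, colour the two copies at the bottom and top of the colour range, and bridge them with an interval colouring of the bipartite part. But, as you yourself note, this only closes if the inner colouring of $K_{2n}$ has a prescribed staircase spectrum (in \cite{b24} the $(4n-2-p-q)$-colouring is built so that vertices pair up, with $S(v_i)=S(v_{n+i-2})=[i-1,2n-3+i]$, etc., exactly the structure quoted in the proof of Theorem \ref{mytheorem16} above). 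An inductive hypothesis asserting only the existence of interval $t$-colourings is too weak to make the bipartite block abut the inner blocks at every vertex, so the invariant you defer to as ``the technical heart'' is in fact the whole content of the proof and is missing.

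The base case is where the proposal genuinely breaks. First, Walecki's decomposition of $K_{2p}$ into $p-1$ Hamilton cycles and a perfect matching exists for every $p$, not only odd $p$, so the oddness of $p$ cannot enter where you claim it does; in the actual construction oddness is what makes the explicit staircase colouring consistent. Second, the ``unfolding'' operation does not work as described: replacing the two colour classes of one Hamilton cycle by $2p-1$ consecutive new colours along a Hamilton path jumps from $2p-1$ to $4p-4$ colours, overshooting the target $3p-2$ for $p\ge 3$ and skipping every intermediate value of $t$; moreover an internal vertex of the path that sees colours $\{c,c+1\}$ together with $[1,2p-3]$ from the untouched matchings has an interval spectrum only when $c=2p-2$, so all but one internal vertex violate the interval condition. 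Third, the edge you ``delete'' from the cycle is still an edge of $K_{2p}$ and must receive a colour somewhere. The base case would therefore have to be replaced by an explicit construction such as the one in \cite{b24}, and the inductive step restated with the spectrum shape built into the hypothesis.
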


\begin{corollary}
\label{mycorollary9} If $n=p2^{q}$, where $p$ is odd and $q$ is
nonnegative, then
\begin{center}
$W_{c}(K_{2n})\geq 4n-2-p-q$.
\end{center}
Moreover, $\left[2n-1,4n-2-p-q\right]\subseteq F(K_{2n})$.
\end{corollary}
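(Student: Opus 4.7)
The plan is to package this as a direct consequence of Theorem \ref{mytheorem15} together with Theorem \ref{mytheorem11} (and the elementary observation that every interval $t$-coloring is an interval cyclic $t$-coloring).

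First I would invoke Theorem \ref{mytheorem15}: under the hypothesis $n=p2^{q}$ with $p$ odd and $q\ge 0$, for every integer $t$ satisfying $2n-1\le t\le 4n-2-p-q$ the graph $K_{2n}$ admits an interval $t$-coloring. In particular, $K_{2n}\in\mathfrak{N}$ and $W(K_{2n})\ge 4n-2-p-q$.

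Next I would observe that any interval $t$-coloring is automatically an interval cyclic $t$-coloring, since the set of colors incident to each vertex, being an interval of consecutive integers, is trivially also an interval of consecutive integers modulo $t$. (This is precisely the first line of the proof of Theorem \ref{mytheorem11}, and is also the content of Corollary \ref{mycorollary2}.) Consequently, every $t$ in the interval $[2n-1,4n-2-p-q]$ belongs to the feasible set $F(K_{2n})$, which is the second assertion of the corollary.

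Finally, taking $t=4n-2-p-q$ shows that $K_{2n}$ has an interval cyclic $(4n-2-p-q)$-coloring, so by definition of $W_{c}$ we obtain $W_{c}(K_{2n})\ge 4n-2-p-q$, which is the first assertion. There is no real obstacle here: the corollary is a repackaging of Theorem \ref{mytheorem15} through the inclusion $\mathfrak{N}\subset\mathfrak{N}_{c}$, with the range statement surviving intact because the conversion from interval to interval cyclic colorings preserves the number of colors used.
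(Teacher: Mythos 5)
Your proposal is correct and follows exactly the route the paper intends: the corollary is an immediate consequence of Theorem \ref{mytheorem15} combined with the observation that every interval $t$-coloring is an interval cyclic $t$-coloring (Corollary \ref{mycorollary2} / Theorem \ref{mytheorem11}). No gaps.
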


\begin{figure}[h]
\begin{center}
\includegraphics[width=35pc,height=21pc]{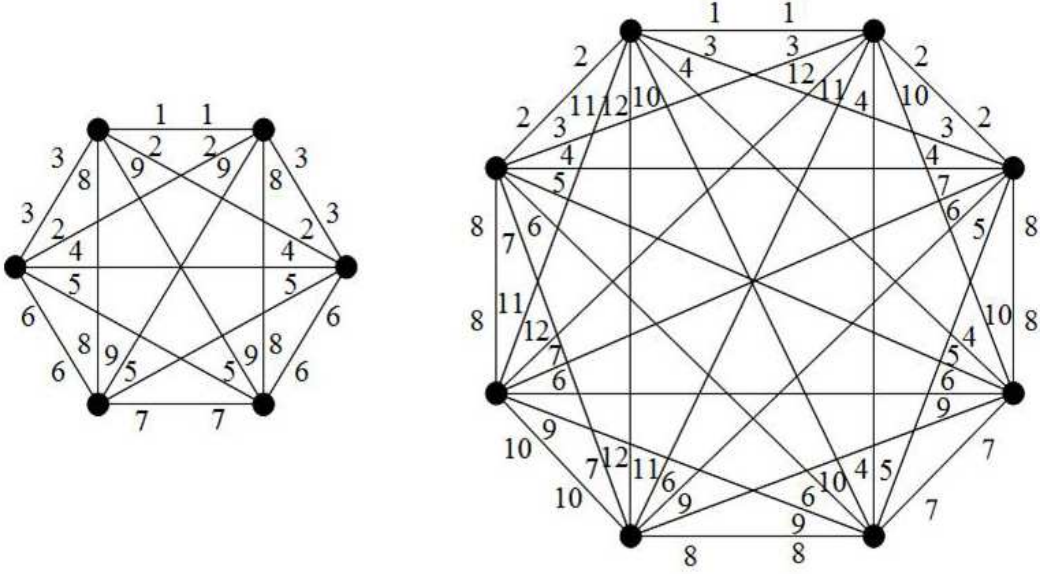}\\
\caption{The interval cyclic $9$-coloring of $K_{6}$ and the
interval cyclic $12$-coloring of $K_{8}$.}\label{K_{6} and K_{8}}
\end{center}
\end{figure}

On the other hand, Corollary \ref{mycorollary4} implies that
$W_{c}(K_{2n})\leq 6n-6$ for $n\geq 2$. It is not difficult to see
that $W(K_{4})=W_{c}(K_{4})=4$. In \cite{b24}, it was proved that
$W(K_{6})=7$ and $W(K_{8})=11$, but Fig. \ref{K_{6} and K_{8}} shows
that $W_{c}(K_{6})\geq 9$ and $W(K_{8})\geq 12$, so
$W_{c}(K_{6})>W(K_{6})$ and $W_{c}(K_{8})>W(K_{8})$. In general, we
strongly believe that $W_{c}(K_{2n})>W(K_{2n})$ for $n\geq 3$. Now
we give a lower bound for $W_{c}(K_{2n+1})$ when $n\in \mathbb{N}$.

\begin{theorem}
\label{mytheorem16} If $n\in \mathbb{N}$, then $W_{c}(K_{2n+1})\geq
3n$.
\end{theorem}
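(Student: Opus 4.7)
I would prove this by exhibiting an explicit interval cyclic $3n$-coloring $\alpha$ of $K_{2n+1}$. The strategy is to partition the vertex set into three groups $A$, $B$, $C$ of sizes $a$, $b$, $c$ summing to $2n+1$, and to assign each vertex in $A$ the palette $S_A = \{1, 2, \ldots, 2n\}$, each vertex in $B$ the palette $S_B = \{n+1, \ldots, 3n\}$, and each vertex in $C$ the palette $S_C = \{2n+1, \ldots, 3n\} \cup \{1, \ldots, n\}$ (computed modulo $3n$). Each $S_X$ is a cyclic interval of length $2n$ in $\mathbb{Z}_{3n}$; any two of them share exactly $n$ colors, and $S_A \cap S_B \cap S_C = \emptyset$.

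I would then color every edge between two distinct groups $X, Y$ using a color from $S_X \cap S_Y$, and every internal edge of a group $X$ using the remaining colors of $S_X$. For each bipartite part $K_{a,b}$, $K_{a,c}$, $K_{b,c}$, a proper edge coloring with $n$ colors drawn from the appropriate pairwise intersection exists precisely when $\max(a,b,c) \le n$. A parity check shows that for every color class to be a matching one needs $a$, $b$, $c$ to all have the same parity, and since $a+b+c = 2n+1$ is odd this forces all of $a, b, c$ to be odd and at most $n$. For every $n \ne 2$ such a triple exists (for instance $(1,1,1)$ when $n=1$, $(1,3,3)$ when $n=3$, $(3,3,3)$ when $n=4$, $(3,3,5)$ when $n=5$, and so on), and the internal coloring of each $K_a, K_b, K_c$ can then be realized as a compatible list edge coloring (trivially so when $a = 1$, and via an appropriate near-$1$-factorization for larger $a$).

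The main obstacle will be handling $n = 2$, for which no admissible triple $(a,b,c)$ exists and the above template fails. In this exceptional case I would produce a direct interval cyclic $6$-coloring of $K_5$, for example by setting $\alpha(v_1 v_2) = 4$, $\alpha(v_1 v_3) = 1$, $\alpha(v_1 v_4) = 2$, $\alpha(v_1 v_5) = 3$, $\alpha(v_2 v_3) = 5$, $\alpha(v_2 v_4) = 3$, $\alpha(v_2 v_5) = 6$, $\alpha(v_3 v_4) = 6$, $\alpha(v_3 v_5) = 2$, and $\alpha(v_4 v_5) = 1$, and verifying by inspection that at each vertex the color set is a cyclic interval of length $4$ in $\mathbb{Z}_6$. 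Combining both cases yields $W_c(K_{2n+1}) \ge 3n$ for every $n \in \mathbb{N}$.
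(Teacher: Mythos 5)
Your three-palette template has a genuine obstruction that kills it for \emph{every} even $n$, not only $n=2$. Since each of the $3n$ colors lies in exactly two of the palettes $S_A,S_B,S_C$, the class of a color in $S_X\cap S_Y$ is forced to be a perfect matching covering all of $X\cup Y$; that yields the parity condition you noted ($a,b,c$ all odd). But there is a second parity constraint you overlooked. A vertex of $A$ must see all $n$ colors of $S_A\cap S_B$, and only $b$ of them can lie on its edges to $B$, so exactly $n-b$ of its internal $A$-edges carry colors from $S_A\cap S_B$; these edges therefore form an $(n-b)$-regular graph on the $a$ vertices of $A$, which requires $a(n-b)$ to be even. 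With $a,b$ odd this forces $n$ odd (and symmetrically for the other groups). Concretely, your proposed triple $(3,3,3)$ for $n=4$ fails: each of the three vertices of $A$ would need exactly one internal $A$-edge colored from $S_A\cap S_B=\{5,6,7,8\}$, i.e.\ a $1$-regular graph on $3$ vertices, which does not exist. So the template covers only odd $n$ --- where with $(a,b,c)=(1,n,n)$ it does reduce cleanly to $1$-factorizations of $K_{n+1}$ (twice) and of $K_{n,n}$, and your appeal to near-$1$-factorizations can be made precise --- together with your explicit $K_5$ coloring, which I checked and is a valid interval cyclic $6$-coloring. All even $n\ge 4$ remain unproved, and they cannot be repaired within the ``three fixed palettes'' framework.

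For comparison, the paper avoids this by using $2n+1$ \emph{different} cyclic intervals arranged in a staircase: it takes a known interval $(3n-2)$-coloring of $K_{2n}$ from an earlier paper of Petrosyan, in which the vertex palettes are the intervals $[2,2n],[2,2n],[3,2n+1],\ldots,[n+1,3n-1]$ each occurring twice, shifts all colors up by one, and then attaches the extra vertex $v_0$ with the wrap-around palette $[1,n]\cup[2n+1,3n]$, choosing the colors of the edges at $v_0$ so that each old palette grows by one color at an appropriate end. If you want to salvage your approach for even $n$, you will need to allow more than three distinct palettes (or palettes that are not pairwise translates by $n$), at which point you are essentially rebuilding the paper's staircase.
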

\begin{proof} For the proof, it suffices to construct an interval
cyclic $3n$-coloring of $K_{2n+1}$. Let
$V(K_{2n+1})=\left\{v_{0},v_{1},\ldots,v_{2n}\right\}$.

Define an edge-coloring $\beta$ of $K_{2n+1}$. For each edge
$v_{i}v_{j}\in E(K_{2n+1})$ with $i<j$, define a color
$\beta\left(v_{i}v_{j}\right)$ as follows:\\

$\beta\left(v_{i}v_{j}\right)=\left\{
\begin{tabular}{ll}
$1$, & if $i=0$, $j=1$;\\
$2n+1$, & if $i=0$, $j=2$;\\
$j-1$, & if $i=0$, $3\leq j\leq n$;\\
$n+1+j$, & if $i=0$, $n+1\leq j\leq 2n-2$;\\
$n$, & if $i=0$, $j=2n-1$;\\
$3n$, & if $i=0$, $j=2n$;\\
$i+j-1$, & if $1\leq i\leq \left\lfloor\frac{n}{2}\right\rfloor$,
$2\leq j\leq n$, $i+j\leq n+1$;\\
$i+j+n-2$, & if $2\leq i\leq n-1$, $\left\lfloor
\frac{n}{2}\right\rfloor +2\leq j\leq n$, $i+j\geq n+2$;\\
$n+1+j-i$, & if $3\leq i\leq n$, $n+1\leq j\leq 2n-2$, $j-i\leq
n-2$;\\
$j-i+1$, & if $1\leq i\leq n$, $n+1\leq j\leq 2n$, $j-i\geq n$;\\
$2i-1$, & if $2\leq i\leq 1+\left\lfloor
\frac{n-1}{2}\right\rfloor$, $n+1\leq j\leq n+\left\lfloor
\frac{n-1}{2}\right\rfloor$, $j-i=n-1$;\\
$i+j-1$, & if $\left\lfloor \frac{n-1}{2}\right\rfloor +2\leq i\leq
n$, $n+1+\left\lfloor \frac{n-1}{2}\right\rfloor\leq j\leq 2n-1$,
$j-i=n-1$;\\
$i+j-2n+1$, & if $n+1\leq i\leq n+\left\lfloor
\frac{n}{2}\right\rfloor -1$, $n+2\leq j\leq 2n-2$, $i+j\leq
3n-1$;\\
$i+j-n$, & if $n+1\leq i\leq 2n-1$, $n+\left\lfloor
\frac{n}{2}\right\rfloor +1\leq j\leq 2n$, $i+j\geq 3n$.
\end{tabular}%
\right.$\\

Let us prove that $\beta $ is an interval cyclic $3n$-coloring of
$K_{2n+1}$.

Let $G$ be the subgraph of $K_{2n+1}$ induced by
$\{v_{1},\ldots,v_{2n}\}$. Clearly, $G$ is isomorphic to $K_{2n}$.
This edge-coloring $\beta$ of $K_{2n+1}$ is constructed and based on
the interval $(3n-2)$-coloring of $G$ which is described in the
proof of Theorem 4 from \cite{b24}. We use this interval
$(3n-2)$-coloring of $G$ and then we shift all colors of the edges
of $G$ by one. Let $\alpha$ be this edge-coloring of $G$. Using the
property of this edge-coloring which is described in the proof of
Corollary 6 from \cite{b24}, we get

\begin{description}
\item[1)] $S\left(v_{1},\alpha\right)=S\left(v_{2},\alpha\right)=[2,2n]$,

\item[2)] $S\left(v_{i},\alpha\right)=S\left(v_{n+i-2},\alpha\right)=[i,2n-2+i]$ for $3\leq i\leq n$,

\item[3)]
$S\left(v_{2n-1},\alpha\right)=S\left(v_{2n},\alpha\right)=[n+1,3n-1]$.
\end{description}

Now, by the definition of $\beta$, we have

\begin{description}
\item[1)] $S\left(v_{0},\beta\right)=[1,n]\cup [2n+1,3n]$,

\item[2)] $S\left(v_{1},\beta\right)=[1,2n]$ and $S\left(v_{2},\beta\right)=[2,2n+1]$,

\item[3)] $S\left(v_{i},\beta\right)=[i-1,2n-2+i]$ and $S\left(v_{n+i-2},\beta\right)=[i,2n-1+i]$ for $3\leq i\leq n$,

\item[4)]
$S\left(v_{2n-1},\beta\right)=[n,3n-1]$ and
$S\left(v_{2n},\beta\right)=[n+1,3n]$.
\end{description}

This shows that $\beta $ is an interval cyclic $3n$-coloring of
$K_{2n+1}$ and hence $W_{c}(K_{2n+1})\geq 3n$. ~$\square$
\end{proof}

Note that the lower bound in Theorem \ref{mytheorem16} is sharp for
$K_{3}$, since $W_{c}(K_{3})=3$. On the other hand, Corollary
\ref{mycorollary4} implies that $W_{c}(K_{2n+1})\leq 6n-3$ for
$n\in\mathbb{N}$. It is worth also noting that, in general, the
feasible set of $K_{2n+1}$ is not gap-free. For example, $K_{7}$ has
an interval cyclic $7$-coloring and, by Theorem \ref{mytheorem16},
it also has an interval cyclic $9$-coloring, but since $\vert
E\left(K_{7}\right)\vert=21$, Theorem \ref{mytheorem12} implies that
$K_{7}$ has no interval cyclic $8$-coloring, so $F(K_{7})$ is not
gap-free.

\section{Interval cyclic edge-colorings of complete bipartite and tripartite
graphs}\label{part3}\

In this section we show that all complete bipartite and tripartite
graphs are interval cyclically colorable. We also obtain some bounds
for parameters $W_{c}\left(K_{m,n}\right)$ and
$w_{c}\left(K_{l,m,n}\right)$ when $l,m,n\in \mathbb{N}$. In
\cite{b12}, Kamalian investigated interval colorings of complete
bipartite graphs and trees. In particular, he proved the following

\begin{theorem}
\label{mytheorem17} For any $m,n\in \mathbb{N}$, we have
\begin{description}
\item[(1)] $K_{m,n}\in \mathfrak{N}$,

\item[(2)] $w\left(K_{m,n}\right)=m+n-\gcd(m,n)$,

\item[(3)] $W\left(K_{m,n}\right)=m+n-1$,

\item[(4)] if $w\left(K_{m,n}\right)\leq t\leq W\left(K_{m,n}\right)$, then $K_{m,n}$
has an interval $t$-coloring.
\end{description}
\end{theorem}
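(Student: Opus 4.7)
The plan is to prove the four parts in a coordinated way, using Theorem \ref{mytheorem2} for the upper bound on $W$ and a generating-function identity for the lower bound on $w$.

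First, for parts (1) and (3), I exhibit the canonical coloring $\alpha(u_iv_j)=i+j-1$, where $u_1,\ldots,u_m$ and $v_1,\ldots,v_n$ are the two parts of $K_{m,n}$. Each $u_i$ sees $[i,i+n-1]$ and each $v_j$ sees $[j,j+m-1]$, and every color in $[1,m+n-1]$ is used, so $\alpha$ is an interval $(m+n-1)$-coloring. This proves $K_{m,n}\in\mathfrak{N}$, giving (1), and $W(K_{m,n})\geq m+n-1$. The matching inequality $W(K_{m,n})\leq m+n-1$ follows immediately from Theorem \ref{mytheorem2} applied to the connected triangle-free graph $K_{m,n}$, giving (3).

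The main technical step is the lower bound $w(K_{m,n})\geq m+n-d$ with $d=\gcd(m,n)$. Consider any interval $t$-coloring with $u_i$-palette $[a_i,a_i+n-1]$ and $v_j$-palette $[b_j,b_j+m-1]$; since colors $1$ and $t$ must be used, $\min_i a_i=1$ and $\max_i a_i=t-n+1$. For each color $c$, the number of $u_i$'s with $c\in[a_i,a_i+n-1]$ equals the number of $v_j$'s with $c\in[b_j,b_j+m-1]$ (both equal the number of edges of color $c$). Defining $\Phi(x)=\sum_i x^{a_i}$ and $\Psi(x)=\sum_j x^{b_j}$, this translates into
\[
\Phi(x)\,(1+x+\cdots+x^{n-1})=\Psi(x)\,(1+x+\cdots+x^{m-1}),
\]
equivalently $\Phi(x)(x^n-1)=\Psi(x)(x^m-1)$. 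Using $\gcd(x^n-1,x^m-1)=x^d-1$, the coprime monic quotients $M(x)=(x^m-1)/(x^d-1)$ and $N(x)=(x^n-1)/(x^d-1)$ satisfy $\Phi N=\Psi M$, and Gauss's lemma yields $M\mid\Phi$ in $\mathbb{Z}[x]$. Write $\Phi=MR$ with $R\in\mathbb{Z}[x]\setminus\{0\}$. Since $\Phi$ has lowest exponent $1$ and $M$ has lowest exponent $0$, the lowest exponent of $R$ is at least $1$; since $\deg\Phi=t-n+1$ and $\deg M=m-d$, we have $\deg R=t-n-m+d+1$. Non-vanishing of $R$ therefore forces $1\leq t-n-m+d+1$, i.e., $t\geq m+n-d$.

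For the upper bound in (2) together with part (4), I would construct an explicit interval $t$-coloring for each $t\in[m+n-d,m+n-1]$. The extremal analysis pinpoints the construction at $t=m+n-d$: the identity $R(1)=d$ combined with the constraints on $R$'s support forces $R(x)=dx$, so $\Phi(x)=dx\cdot M(x)$ partitions the $u$-part into $m/d$ groups of size $d$ with starting colors $1,1+d,\ldots,m-d+1$, and symmetrically for the $v$-part; a standard bipartite edge-coloring on each $d\times d$ block-pair realises the prescribed palettes. For intermediate $t=m+n-d+r$ with $r\in[1,d-1]$, I would interpolate by breaking $r$ of the $u$-groups (and matching $v$-groups) into finer pieces whose starting colors shift by one, producing a hybrid between the grouped coloring and the canonical one. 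The main obstacle is the $\gcd$-based lower bound just described; the remaining parts reduce either to a direct appeal to Theorem \ref{mytheorem2} or to explicit, if case-intensive, constructions of proper edge-colorings realising the prescribed palettes.
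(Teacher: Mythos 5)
The paper does not actually prove this statement; Theorem \ref{mytheorem17} is quoted from Kamalian's preprint \cite{b12} with no proof given, so there is no in-paper argument to compare yours against. Judged on its own merits, most of your proposal is correct and complete. The canonical coloring $i+j-1$ together with Theorem \ref{mytheorem2} settles (1) and (3). Your polynomial argument for the lower bound $w(K_{m,n})\geq m+n-\gcd(m,n)$ is rigorous: the palette-counting identity $\Phi(x)(1+\cdots+x^{n-1})=\Psi(x)(1+\cdots+x^{m-1})$ is valid because in an interval coloring every palette color appears on exactly one edge at that vertex, the divisibility $M\mid\Phi$ follows since $M$ is monic, and the order/degree bookkeeping on $R$ gives $t\geq m+n-d$. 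The matching construction at $t=m+n-d$ (partition both sides into blocks of size $d$, give the block pair $(U_p,V_q)$ the color window $[(p+q-2)d+1,(p+q-1)d]$ and properly $d$-edge-color each $K_{d,d}$) is also sound and finishes (2).

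The genuine gap is part (4), which you only sketch. Two things are missing. First, for $t=m+n-d+r$ with $1\leq r\leq d-1$ you must actually exhibit starting colors $a_1,\ldots,a_m$ and $b_1,\ldots,b_n$ satisfying the counting identity with $\min a_i=1$, $\max a_i=t-n+1$ (and likewise for the $b_j$); "breaking $r$ of the groups into finer pieces whose starting colors shift by one" is a plausible recipe but is never verified to satisfy the identity for all $m,n,r$. Second, and more importantly, even once admissible palettes are prescribed, the existence of a proper edge-coloring realizing them is not automatic: for each color $c$ the sets $U_c=\{u_i\colon c\in[a_i,a_i+n-1]\}$ and $V_c=\{v_j\colon c\in[b_j,b_j+m-1]\}$ have equal size, but you must still argue that $K_{m,n}$ decomposes into perfect matchings of the pairs $(U_c,V_c)$, $c=1,\ldots,t$. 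In the clean case $t=m+n-d$ this is immediate because the matchings live in disjoint $K_{d,d}$ blocks, but for intermediate $t$ the windows overlap irregularly and a separate argument (an explicit formula, or a Hall-type/edge-coloring-of-regular-bipartite-graphs extraction of the color classes one at a time) is required. Until that interpolation is carried out, part (4) — and with it the upper bound direction of the "spectrum" claim — is unproven.
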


We first prove the theorem on the feasible set of complete bipartite
graphs.

\begin{theorem}
\label{mytheorem18} If $\min\{m,n\}=1$, then
$w_{c}(K_{m,n})=W_{c}(K_{m,n})=m+n-1$. If $\min\{m,n\}\geq 2$, then
$\left[\max\{m,n\},m+n\right]\subseteq F\left(K_{m,n}\right)$.
\end{theorem}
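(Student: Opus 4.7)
The plan is to handle the two cases separately. If $\min\{m,n\}=1$, then $K_{m,n}$ is a star with $m+n-1$ edges all incident to one center vertex. In any interval cyclic $t$-coloring these edges must receive $m+n-1$ consecutive colors modulo $t$ and all $t$ colors must appear, forcing $t=m+n-1$; conversely, assigning the edges the distinct colors $1,\ldots,m+n-1$ clearly gives such a coloring, so $F(K_{m,n})=\{m+n-1\}$ and the first assertion follows.

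For $\min\{m,n\}\geq 2$, I would assume without loss of generality that $m\leq n$. By Theorem \ref{mytheorem17}, $K_{m,n}\in\mathfrak{N}$ with $W(K_{m,n})=m+n-1$, and $\Delta(K_{m,n})=n$, so Theorem \ref{mytheorem11} immediately gives $[n,m+n-1]\subseteq F(K_{m,n})$. Hence it suffices to exhibit an interval cyclic $(m+n)$-coloring of $K_{m,n}$. I would do this by perturbing the standard diagonal coloring $\alpha(u_iv_j)=i+j-1$ on a single edge: let $\beta$ agree with $\alpha$ everywhere except that $\beta(u_mv_1)=m+n$. One then verifies in turn that $\beta$ is proper (the colors at $u_m$ become $m+n,m+1,\ldots,m+n-1$ and at $v_1$ become $1,\ldots,m-1,m+n$, all distinct), that each $u_i$ uses $n$ consecutive colors mod $m+n$ (namely $[m+1,m+n]$ for $i=m$ and unchanged otherwise), that each $v_j$ uses $m$ consecutive colors mod $m+n$ (unchanged for $j\geq 2$), and that every color in $[1,m+n]$ is used (color $m$ persists on the edge $u_{m-1}v_2$, which is where the hypothesis $m\geq 2$ is essential).

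The most delicate step will be the $v_1$ case above: the color set $\{1,\ldots,m-1,m+n\}$ is $m$ consecutive only when read cyclically modulo $m+n$, namely as $m+n,m+n+1,\ldots,m+n+m-1$. This cyclic-wraparound requirement is precisely what dictates the choice of edge to recolor---one needs the new color $m+n$ to sit immediately before the rest of $v_1$'s interval cyclically, while simultaneously replacing a value at the boundary of $u_m$'s interval. The edge $u_mv_1$ is the natural candidate satisfying both constraints, and spotting this will be the main creative input in the argument.
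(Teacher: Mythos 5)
Your proposal is correct and follows essentially the same route as the paper: the range $\left[\max\{m,n\},m+n-1\right]$ comes from Theorems \ref{mytheorem11} and \ref{mytheorem17}, and the extra value $m+n$ is obtained by recoloring a single corner edge of the standard diagonal coloring $i+j-1$ so that one vertex's palette wraps around cyclically. The only (immaterial) difference is that you recolor $u_mv_1$ where the paper recolors $u_1v_n$; both verifications go through identically, including the check that the displaced color survives elsewhere thanks to $\min\{m,n\}\geq 2$.
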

\begin{proof} First note that if $\min\{m,n\}=1$, then $K_{m,n}$ is
a star and hence $w_{c}(K_{m,n})=W_{c}(K_{m,n})=m+n-1$.

Assume that $\min\{m,n\}\geq 2$. Let us show that if
$\max\{m,n\}\leq t\leq m+n$, then $K_{m,n}$ has an interval cyclic
$t$-coloring. By Theorems \ref{mytheorem11} and \ref{mytheorem17},
we have $\left[\max\{m,n\},m+n-1\right]\subseteq
F\left(K_{m,n}\right)$. Now we prove that $K_{m,n}$ has an interval
cyclic $(m+n)$-coloring.

Let $V(K_{m,n})=\{u_{1},\ldots,u_{m},v_{1},\ldots,v_{n}\}$ and
$E(K_{m,n})=\left\{u_{i}v_{j}\colon\,1\leq i\leq m,1\leq j\leq
n\right\}$.

Define an edge-coloring $\alpha$ of $K_{m,n}$ as follows: for $1\leq
i\leq m$ and $1\leq j\leq n$, let

\begin{center}
$\alpha\left(u_{i}v_{j}\right)=\left\{
\begin{tabular}{ll}
$i+j-1$, & if $(i,j)\neq (1,n)$,\\
$m+n$, & otherwise.\\
\end{tabular}%
\right.$
\end{center}

It is not difficult to see that $\alpha$ is an interval cyclic
$(m+n)$-coloring of $K_{m,n}$ and hence
$\left[\max\{m,n\},m+n\right]\subseteq F\left(K_{m,n}\right)$ when
$\min\{m,n\}\geq 2$. ~$\square$
\end{proof}

\begin{corollary}
\label{mycorollary10} If $\min\{m,n\}=1$, then
$w_{c}(K_{m,n})=W_{c}(K_{m,n})=m+n-1$. If $\min\{m,n\}\geq 2$, then
$w_{c}(K_{m,n})=\max\{m,n\}$ and $W_{c}(K_{m,n})\geq m+n$.
\end{corollary}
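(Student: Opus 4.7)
The plan is to deduce this corollary directly from Theorem \ref{mytheorem18}, using only the general inequality chain $\chi^{\prime}(G)\leq w_{c}(G)\leq w(G)\leq W(G)\leq W_{c}(G)$ mentioned in Section 2. There is no genuine obstacle here; the work is just translating ``$[\max\{m,n\},m+n]\subseteq F(K_{m,n})$'' into statements about the extremal values $w_c$ and $W_c$.

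First, I would dispatch the case $\min\{m,n\}=1$. Here $K_{m,n}$ is a star, so by Theorem \ref{mytheorem18} its feasible set is the single value $m+n-1$, which immediately gives $w_c(K_{m,n})=W_c(K_{m,n})=m+n-1$.

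Next, assume $\min\{m,n\}\geq 2$. The bound $W_c(K_{m,n})\geq m+n$ is immediate: Theorem \ref{mytheorem18} tells us that $m+n\in F(K_{m,n})$, and $W_c$ is by definition the largest element of $F$. For the value of $w_c(K_{m,n})$, I would argue in two directions. For the upper bound, Theorem \ref{mytheorem18} gives $\max\{m,n\}\in F(K_{m,n})$, and hence $w_c(K_{m,n})\leq \max\{m,n\}$. For the matching lower bound, note that $K_{m,n}\in\mathfrak{N}$ by Theorem \ref{mytheorem17}(1), so Theorem \ref{mytheorem1} yields $\chi^{\prime}(K_{m,n})=\Delta(K_{m,n})=\max\{m,n\}$; combined with the general inequality $w_c(G)\geq \chi^{\prime}(G)$ recalled in Section 2, this gives $w_c(K_{m,n})\geq \max\{m,n\}$.

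Putting the two inequalities together yields $w_c(K_{m,n})=\max\{m,n\}$, which completes the corollary. The entire argument is essentially bookkeeping, so the only thing to be careful about is citing Theorem \ref{mytheorem17} (or equivalently König's theorem) to justify $\chi^{\prime}(K_{m,n})=\Delta(K_{m,n})$ rather than invoking it as ``obvious.''
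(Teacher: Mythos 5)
Your proposal is correct and is exactly the intended immediate deduction from Theorem \ref{mytheorem18} (the paper gives no separate proof of the corollary): the star case is verbatim from that theorem, $W_{c}\geq m+n$ follows from $m+n\in F(K_{m,n})$, and $w_{c}=\max\{m,n\}$ follows from $\max\{m,n\}\in F(K_{m,n})$ together with $w_{c}\geq\chi'=\Delta$. One could shorten the lower-bound step by citing Corollary \ref{mycorollary6} ($w_{c}(G)=\Delta(G)$ for $G\in\mathfrak{N}$) directly, but that is the same argument.
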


Now we show that all complete tripartite graphs are interval
cyclically colorable.

\begin{theorem}
\label{mytheorem19} For any $l,m,n\in \mathbb{N}$, we have
$K_{l,m,n}\in \mathfrak{N}_{c}$ and $w_{c}(K_{l,m,n})\leq l+m+n$.
\end{theorem}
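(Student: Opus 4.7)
The plan is to prove the stronger statement by constructing an explicit interval cyclic $(l+m+n)$-coloring of $K_{l,m,n}$; this simultaneously yields $K_{l,m,n}\in\mathfrak{N}_{c}$ and $w_{c}(K_{l,m,n})\le l+m+n$. I would label the three partite classes $U=\{u_{1},\ldots,u_{l}\}$, $V=\{v_{1},\ldots,v_{m}\}$, $W=\{w_{1},\ldots,w_{n}\}$ and take $t=l+m+n$. Since the vertex degrees are $d(u_{i})=m+n$, $d(v_{j})=l+n$, $d(w_{k})=l+m$, each is strictly less than $t$, so there is exactly enough room for a cyclic interval of the required length around every vertex.

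The coloring I would propose is
\begin{equation*}
\alpha(u_{i}v_{j})=i+j-1,\qquad \alpha(v_{j}w_{k})=l+j+k-1,\qquad \alpha(u_{i}w_{k})\equiv i+k-n-1\pmod{l+m+n},
\end{equation*}
with colors represented in $\{1,\ldots,l+m+n\}$. The first two formulas are essentially the standard interval colorings of the $U$-$V$ and $V$-$W$ bipartite layers from Theorem~\ref{mytheorem17}, with the $V$-$W$ layer shifted by $l$ so that the interval it produces at each $v_{j}$ lies immediately after the interval produced by the $U$-$V$ layer. The third formula is then forced: at each $u_{i}$ the $U$-$V$ layer contributes $\{i,\ldots,i+m-1\}$, and at each $w_{k}$ the $V$-$W$ layer contributes $\{l+k,\ldots,l+k+m-1\}$, so the $U$-$W$ edges must fit into the unique remaining cyclic gap modulo $t$.

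The verification I would carry out breaks into three routine checks, one per partite class. At each $u_{i}$ the colors form $\{i,\ldots,i+m-1\}\cup\{i-n,\ldots,i-1\}\pmod{t}$, a cyclic interval of length $m+n$. At each $v_{j}$ the colors form $\{j,\ldots,j+l+n-1\}$, an interval of length $l+n$ that does not wrap since $j+l+n-1\le t-1$. At each $w_{k}$ the $V$-contribution $\{l+k,\ldots,l+k+m-1\}$ abuts the $U$-contribution $\{k-n,\ldots,k+l-n-1\}\equiv\{l+m+k,\ldots,2l+m+k-1\}\pmod{t}$, giving a cyclic interval of length $l+m$. A direct count then shows that the three color sets $\{1,\ldots,l+m-1\}$, $\{l+1,\ldots,l+m+n-1\}$ and $\{1,\ldots,l-1\}\cup\{l+m+1,\ldots,l+m+n\}$ produced by the three layers together exhaust $\{1,\ldots,l+m+n\}$, so all colors are used.

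The only step with any subtlety is handling the $U$-$W$ layer, which is the unique one forced to wrap around the modulus: when $i+k\le n+1$ the natural representative $i+k-n-1$ is non-positive, and one must observe that adding $t=l+m+n$ places these values into $\{l+m+1,\ldots,l+m+n\}$, exactly where the cyclic interval at $w_{k}$ requires them. Once this modular bookkeeping is settled, everything else is linear arithmetic and no real obstacle remains.
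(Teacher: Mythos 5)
Your proposal is correct and takes essentially the same approach as the paper: an explicit staircase coloring of each of the three bipartite layers, with exactly one layer wrapping around the modulus $t=l+m+n$, and the verification reduces to checking that at each vertex the two incident layers produce abutting (cyclic) intervals. The only cosmetic difference is which layer is chosen to wrap (the paper, after assuming $l\le m\le n$, wraps the edges between the $l$-part and the $m$-part and writes the wrap as a two-case formula rather than a single congruence), and your arithmetic checks out in all cases.
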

\begin{proof} Without loss of generality we may assume that $l\leq m\leq
n$. Clearly, for the proof, it suffices to construct an interval
cyclic $(l+m+n)$-coloring of $K_{l,m,n}$.

Let
$V(K_{l,m,n})=\{u_{1},\ldots,u_{m},v_{1},\ldots,v_{n},w_{1},\ldots,w_{l}\}$
and $E(K_{l,m,n})=\left\{u_{i}v_{j}\colon\,1\leq i\leq m,1\leq j\leq
n\right\}\cup \left\{u_{i}w_{j}\colon\,1\leq i\leq m,1\leq j\leq
l\right\}\cup \left\{w_{i}v_{j}\colon\,1\leq i\leq l,1\leq j\leq
n\right\}$.

Define an edge-coloring $\alpha$ of $K_{l,m,n}$ as follows:

\begin{description}
\item[(1)] for $1\leq i\leq m$ and $1\leq j\leq n$, let
\begin{center}
$\alpha \left(u_{i}v_{j}\right)=l+i+j-1$;
\end{center}

\item[(2)] for $1\leq i\leq l$ and $1\leq j\leq n$, let
\begin{center}
$\alpha \left(w_{i}v_{j}\right)=i+j-1$;
\end{center}

\item[(3)] for $1\leq i\leq m$, $1\leq j\leq l$ and $i+j\leq m+1$, let
\begin{center}
$\alpha \left(u_{i}w_{j}\right)=l+n+i+j-1$;
\end{center}

\item[(4)] for $1\leq i\leq m$, $1\leq j\leq l$ and $i+j\geq m+2$, let
\begin{center}
$\alpha \left(u_{i}w_{j}\right)=i+j-m-1$.
\end{center}
\end{description}

Let us prove that $\alpha$ is an interval cyclic $(l+m+n)$-coloring
of $K_{l,m,n}$.

By the definition of $\alpha$, we have
\begin{description}
\item[1)] for $1\leq i\leq m-l+1$,

$S\left(u_{i},\alpha\right)=[l+i,l+n+i-1]\cup
[l+n+i,2l+n+i-1]=[l+i,2l+n+i-1]$ due to (1) and (3),

\item[2)] for $m-l+2\leq i\leq m$,

$S\left(u_{i},\alpha\right)=[l+i,l+n+i-1]\cup [l+n+i,l+m+n]\cup
[1,l-m-1+i]=[1,l-m-1+i]\cup [l+i,l+m+n]$ due to (1),(3) and (4),

\item[3)] for $1\leq i\leq n$,

$S\left(v_{i},\alpha\right)=[l+i,l+m+i-1]\cup [i,l+i-1]=[i,l+m+i-1]$
due to (1) and (2),

\item[4)]for $1\leq i\leq l$,

$S\left(w_{i},\alpha\right)=[i,n+i-1]\cup [l+n+i,l+m+n]\cup
[1,i-1]=[1,n+i-1]\cup [l+n+i,l+m+n]$ due to (2),(3) and (4).
\end{description}

This implies that $\alpha$ is an interval cyclic $(l+m+n)$-coloring
of $K_{l,m,n}$; thus $K_{l,m,n}\in \mathfrak{N}_{c}$ and
$w_{c}\left(K_{l,m,n}\right)\leq l+m+n$ for $l,m,n\in \mathbb{N}$.
~$\square$
\end{proof}

\begin{corollary}
\label{mycorollary11} For any $l,m,n\in \mathbb{N}$, we have
$K_{l,m,n}\in \mathfrak{N}_{c}$ and $W_{c}(K_{l,m,n})\geq l+m+n$.
\end{corollary}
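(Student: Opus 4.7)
The plan is to observe that this corollary is essentially a direct reading of Theorem~\ref{mytheorem19} together with the definition of $W_c$. Theorem~\ref{mytheorem19} already asserts that $K_{l,m,n}\in\mathfrak{N}_c$, so the only nontrivial claim is the lower bound $W_c(K_{l,m,n})\geq l+m+n$.

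To get that bound, I would simply invoke the explicit coloring $\alpha$ constructed in the proof of Theorem~\ref{mytheorem19}. That proof verifies that $\alpha$ is an interval cyclic $(l+m+n)$-coloring of $K_{l,m,n}$. By the very definition of $W_c(G)$ as the largest $t$ admitting an interval cyclic $t$-coloring, the existence of this coloring forces $W_c(K_{l,m,n})\geq l+m+n$.

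There is really no obstacle here; the step to highlight is only that Theorem~\ref{mytheorem19} delivers \emph{both} inequalities $w_c(K_{l,m,n})\leq l+m+n$ and $W_c(K_{l,m,n})\geq l+m+n$ from the single constructed coloring, since any $t$ with an interval cyclic $t$-coloring satisfies $w_c(G)\leq t\leq W_c(G)$. So the corollary is proved in one line by citing the construction and appealing to this definitional sandwich.
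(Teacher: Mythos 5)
Your proposal is correct and matches the paper's (implicit) derivation exactly: the corollary is stated without separate proof precisely because the explicit interval cyclic $(l+m+n)$-coloring constructed in the proof of Theorem~\ref{mytheorem19} simultaneously yields $w_{c}(K_{l,m,n})\leq l+m+n$ and $W_{c}(K_{l,m,n})\geq l+m+n$. Nothing further is needed.
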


Note that the upper bound in Theorem \ref{mytheorem19} is sharp for
$K_{1,m,n}$ when $m$ and $n$ are odd, since
$w_{c}\left(K_{1,1,1}\right)=\chi^{\prime}\left(C_{3}\right)=3$ and
for $\max\{m,n\}\geq 3$, $m+n+1\geq
w_{c}\left(K_{1,m,n}\right)>\chi^{\prime}\left(K_{1,m,n}\right)=\Delta\left(K_{1,m,n}\right)=m+n$
due to Corollary \ref{mycorollary7}. It is worth also noting that
although all complete tripartite graphs are interval cyclically
colorable, in \cite{b9} Grzesik and Khachatrian proved that
$K_{1,m,n}$ is interval colorable if and only if $\gcd(m+1,n+1)=1$.
This implies that there are infinitely many complete tripartite
graphs from the class $\mathfrak{N}_{c}\setminus \mathfrak{N}$.

\section{Interval cyclic edge-colorings of hypercubes}\label{part4}\

In this section we show that hypercubes $Q_{n}$ are interval
cyclically colorable. We also obtain some bounds for the parameter
$W_{c}\left(Q_{n}\right)$ when $n\in \mathbb{N}$. In \cite{b24},
Petrosyan investigated interval colorings of complete graphs and
hypercubes. In particular, he proved that $Q_{n}\in \mathfrak{N}$
and $w(Q_{n})=n$, $W(Q_{n})\geq\frac{n(n+1)}{2}$ for any
$n\in\mathbf{N}$. In the same paper he also conjectured that
$W(Q_{n})=\frac{n(n+1)}{2}$ for any $n\in\mathbf{N}$. In \cite{b26},
the authors confirmed this conjecture. This implies that $Q_{n}\in
\mathfrak{N}_{c}$ and $w_{c}(Q_{n})=n$,
$W_{c}(Q_{n})\geq\frac{n(n+1)}{2}$. Moreover, by Theorem
\ref{mytheorem11}, we obtain
$\left[n,\frac{n(n+1)}{2}\right]\subseteq F(Q_{n})$. On the other
hand, since $Q_{n}$ is a connected bipartite graph and taking into
account that $\mathrm{diam}\left(Q_{n}\right)=\Delta
\left(Q_{n}\right)=n$, we get $W_{c}\left(Q_{n}\right)\leq 1+2\cdot
\mathrm{diam}\left(Q_{n}\right)\left(\Delta\left(Q_{n}\right)-1\right)=2n^{2}-2n+1$,
by Theorem \ref{mytheorem9}. So, we have $\frac{n(n+1)}{2}\leq
W_{c}\left(Q_{n}\right)\leq 2n^{2}-2n+1$ for any $n\in\mathbf{N}$.
Now we prove a new lower bound for $W_{c}\left(Q_{n}\right)$ which
improves $W_{c}(Q_{n})\geq\frac{n(n+1)}{2}$ for $2\leq n\leq 5$.

\begin{figure}[h]
\begin{center}
\includegraphics[width=15pc,height=13pc]{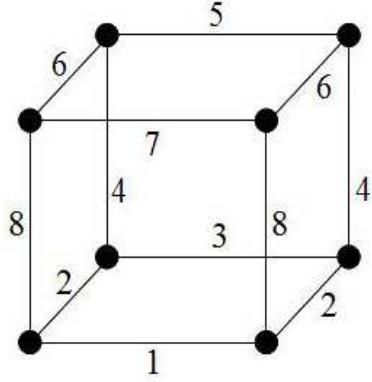}\\
\caption{The interval cyclic $8$-coloring of $Q_{3}$.}\label{Q_{3}}
\end{center}
\end{figure}

\begin{theorem}
\label{mytheorem20} For any integer $n\geq 2$, we have
$W_{c}\left(Q_{n}\right)\geq 4(n-1)$.
\end{theorem}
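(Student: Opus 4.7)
The plan is to construct, for every integer $n \geq 2$, an interval cyclic $4(n-1)$-coloring of $Q_n$; the stated bound then follows at once. The base case $n=2$ is immediate: coloring the four edges of $Q_2 = C_4$ cyclically with $1, 2, 3, 4$ yields an interval cyclic $4$-coloring, since each degree-two vertex sees two consecutive colors modulo $4$.

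For $n \geq 3$, I would exploit the product decomposition $Q_n \cong C_4 \,\square\, Q_{n-2}$ to lift an interval coloring of the smaller hypercube. Let $\beta$ be an interval $\frac{(n-2)(n-1)}{2}$-coloring of $Q_{n-2}$, which exists by \cite{b26}; at each vertex $w$ the colors used by $\beta$ form a block $\{a(w), a(w)+1, \ldots, a(w)+n-3\}$ for some $a(w)$ (set $a(w)=1$ when $n=3$). Writing the vertices of $Q_n$ as pairs $(i,w)$ with $i \in \mathbb{Z}_4$ and $w \in V(Q_{n-2})$, I would assign each $Q_{n-2}$-edge $e$ lying in the $i$-th copy the color $\beta(e) + (n-1)i \pmod{4(n-1)}$, and each $C_4$-edge $\{(i,w),(i+1,w)\}$ the color $a(w) + (n-1)i + (n-2) \pmod{4(n-1)}$. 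A direct check would show that the $n$ edges incident to $(i,w)$ receive precisely the $n$ consecutive residues $\{a(w)+(n-1)i-1, \ldots, a(w)+(n-1)i+n-2\}$ modulo $4(n-1)$, so the coloring is proper and has the required interval-cyclic structure at every vertex.

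The main obstacle is verifying that every residue modulo $4(n-1)$ is actually used as a color, as the definition of an interval cyclic $t$-coloring demands. The four shifts $0, n-1, 2(n-1), 3(n-1)$ of $\beta$ are equally spaced in $\mathbb{Z}_{4(n-1)}$, and within each gap of length $n-1$ between successive shifted $\beta$-ranges the $C_4$-edges should supply the missing colors; this works cleanly once $\frac{(n-2)(n-1)}{2} \geq n-1$, i.e.\ for $n \geq 4$. The borderline case $n=3$ would be handled by direct inspection: the $Q_1$-edges receive the odd colors $1, 3, 5, 7$ and the $C_4$-edges receive the even colors $2, 4, 6, 8$, matching the interval cyclic $8$-coloring of $Q_3$ displayed in Fig.~\ref{Q_{3}}. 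Finally, for the redundant range $n \geq 6$ one could alternatively shortcut the whole argument using $n(n+1)/2 \geq 4(n-1)$ together with $W(Q_n) = n(n+1)/2$ and Theorem~\ref{mytheorem11}, which gives $4(n-1) \in F(Q_n)$ immediately.
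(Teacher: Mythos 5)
Your construction is correct, and it rests on the same skeleton as the paper's proof: $Q_n \cong C_4\,\square\,Q_{n-2}$, four copies of $Q_{n-2}$ shifted by multiples of $n-1$ around a $4$-cycle, with the connecting matchings extending each vertex's palette to $n$ consecutive residues modulo $4(n-1)$. Where you genuinely diverge is in the base coloring that gets lifted. The paper uses a specially balanced interval $(n-1)$-coloring of $Q_{n-2}$ in which half the vertices have palette $[1,n-2]$ and half have $[2,n-1]$ — a fact it only asserts ``can easily be done by induction'' — and must then split each matching's colors into two cases according to palette type. You instead lift the maximum interval coloring with $W(Q_{n-2})=\frac{(n-2)(n-1)}{2}$ colors from \cite{b26}, reduce modulo $4(n-1)$, and key each matching edge to the left endpoint $a(w)$ of its fiber's palette. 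This buys a single uniform formula, makes surjectivity onto all $4(n-1)$ colors automatic for $n\geq 4$ (since $\frac{(n-2)(n-1)}{2}\geq n-1$, the four shifted ranges already cover $\mathbb{Z}_{4(n-1)}$), and replaces the paper's unproved auxiliary claim with a cited theorem; the price is the extra checks you correctly flag (distinctness of $n$ consecutive residues modulo $4(n-1)$, which holds as $n\leq 4(n-1)$, plus the separate cases $n=2,3$). Your closing remark that for $n\geq 6$ the bound already follows from $W(Q_n)=\frac{n(n+1)}{2}$ and Theorem~\ref{mytheorem11} is also accurate — the paper itself notes its bound only improves the quadratic one for $2\leq n\leq 5$.
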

\begin{proof} First let us note that for any
integer $n\geq 2$, $Q_{n}$ has an interval $(n+1)$-coloring such
that for one half of vertices of $Q_{n}$, the set of colors
appearing on edges incident to these vertices is an interval $[1,n]$
and for remaining half of vertices of $Q_{n}$, the set of colors
appearing on edges incident to these remaining vertices is an
interval $[2,n+1]$. It can be easily done by induction on $n$. Also,
it is not difficult to see that
$W_{c}\left(Q_{2}\right)=W_{c}\left(C_{4}\right)=4$ and
$W_{c}\left(Q_{3}\right)\geq 8$ (See Fig. \ref{Q_{3}}).

Assume that $n\geq 4$.

For $(i,j)\in \{0,1\}^{2}$, let $Q_{n-2}^{(i,j)}$ be the subgraph of
$Q_{n}$ induced by the vertices

\begin{center}
$\left\{\left(i,j,\alpha _{3},\ldots ,\alpha _{n}\right)\colon\,
\left(\alpha _{3},\ldots ,\alpha _{n}\right)\in
\left\{0,1\right\}^{n-2}\right\}$.
\end{center}

Each $Q_{n-2}^{(i,j)}$ is isomorphic to $Q_{n-2}$. Let $\varphi$ be
an interval $(n-1)$-coloring of $Q_{n-2}^{(0,0)}$ such that for one
half of vertices of $Q_{n-2}^{(0,0)}$, the set of colors appearing
on edges incident to these vertices be an interval $[1,n-2]$ and for
remaining half of vertices of $Q_{n-2}^{(0,0)}$, the set of colors
appearing on edges incident to these remaining vertices be an
interval $[2,n-1]$.

Let us define an edge-coloring $\psi $ of subgraphs
$Q_{n-2}^{(0,1)}$, $Q_{n-2}^{(1,1)}$ and $Q_{n-2}^{(1,0)}$ of
$Q_{n}$ as follows:

\begin{description}
\item[(1)] for every edge $\left(0,1,\bar
\alpha\right)\left(0,1,\bar\beta\right)\in
E\left(Q_{n-2}^{(0,1)}\right)$, let

\begin{center}
$\psi\left(\left(0,1,\bar
\alpha\right)\left(0,1,\bar\beta\right)\right) =\varphi
\left(\left(0,0,\bar
\alpha\right)\left(0,0,\bar\beta\right)\right)+n-1$;
\end{center}

\item[(2)] for every edge $\left(1,1,\bar
\alpha\right)\left(1,1,\bar\beta\right)\in
E\left(Q_{n-2}^{(1,1)}\right)$, let

\begin{center}
$\psi\left(\left(1,1,\bar
\alpha\right)\left(1,1,\bar\beta\right)\right) =\varphi
\left(\left(0,0,\bar
\alpha\right)\left(0,0,\bar\beta\right)\right)+2n-2$;
\end{center}

\item[(3)] for every edge $\left(1,0,\bar
\alpha\right)\left(1,0,\bar\beta\right)\in
E\left(Q_{n-2}^{(1,0)}\right)$, let

\begin{center}
$\psi\left(\left(1,0,\bar
\alpha\right)\left(1,0,\bar\beta\right)\right) =\varphi
\left(\left(0,0,\bar
\alpha\right)\left(0,0,\bar\beta\right)\right)+3n-3$.
\end{center}
\end{description}

Now we define an edge-coloring $\lambda$ of the graph $Q_{n}$.

For every edge $\tilde{\alpha}\tilde{\beta} \in E\left(
Q_{n}\right)$, let

\begin{center}
$\lambda \left(\tilde{\alpha}\tilde{\beta}\right)=\left\{
\begin{tabular}{ll}
$\varphi\left(\tilde{\alpha}\tilde{\beta}\right),$ & if $%
\tilde{\alpha},\tilde{\beta}\in V\left(Q_{n-2}^{(0,0)}\right)$,\\
$\psi\left(\tilde{\alpha}\tilde{\beta}\right),$ & if $%
\tilde{\alpha},\tilde{\beta}\in V\left(Q_{n-2}^{(0,1)}\right)$ or
$\tilde{\alpha},\tilde{\beta}\in V\left(Q_{n-2}^{(1,1)}\right)$ or
$\tilde{\alpha},\tilde{\beta}\in V\left(Q_{n-2}^{(1,0)}\right)$,\\
$n-1,$ & if $\tilde{\alpha}\in V\left(Q_{n-2}^{(0,0)}\right)$,
$\tilde{\beta}\in
V\left(Q_{n-2}^{(0,1)}\right)$, $S\left(\tilde{\alpha},\varphi\right)=[1,n-2]$,\\
$n,$ & if $\tilde{\alpha}\in V\left(Q_{n-2}^{(0,0)}\right)$,
$\tilde{\beta}\in
V\left(Q_{n-2}^{(0,1)}\right)$, $S\left(\tilde{\alpha},\varphi\right)=[2,n-1]$,\\
$2n-2,$ & if $\tilde{\alpha}\in V\left(Q_{n-2}^{(0,1)}\right)$,
$\tilde{\beta}\in
V\left(Q_{n-2}^{(1,1)}\right)$, $S\left(\tilde{\alpha},\psi\right)=[n,2n-3]$,\\
$2n-1,$ & if $\tilde{\alpha}\in V\left(Q_{n-2}^{(0,1)}\right)$,
$\tilde{\beta}\in
V\left(Q_{n-2}^{(1,1)}\right)$, $S\left(\tilde{\alpha},\psi\right)=[n+1,2n-2]$,\\
$3n-3,$ & if $\tilde{\alpha}\in V\left(Q_{n-2}^{(1,1)}\right)$,
$\tilde{\beta}\in
V\left(Q_{n-2}^{(1,0)}\right)$, $S\left(\tilde{\alpha},\psi\right)=[2n-1,3n-4]$,\\
$3n-2,$ & if $\tilde{\alpha}\in V\left(Q_{n-2}^{(1,1)}\right)$,
$\tilde{\beta}\in
V\left(Q_{n-2}^{(1,0)}\right)$, $S\left(\tilde{\alpha},\psi\right)=[2n,3n-3]$,\\
$4n-4,$ & if $\tilde{\alpha}\in V\left(Q_{n-2}^{(1,0)}\right)$,
$\tilde{\beta}\in
V\left(Q_{n-2}^{(0,0)}\right)$, $S\left(\tilde{\alpha},\psi\right)=[3n-2,4n-5]$,\\
$1,$ & if $\tilde{\alpha}\in V\left(Q_{n-2}^{(1,0)}\right)$,
$\tilde{\beta}\in
V\left(Q_{n-2}^{(0,0)}\right)$, $S\left(\tilde{\alpha},\psi\right)=[3n-1,4n-4]$.%
\end{tabular}%
\right.$
\end{center}

It is easy to verify that $\lambda$ is an interval cyclic
$(4n-4)$-coloring of $Q_{n}$; thus $W_{c}\left(Q_{n}\right)\geq
4(n-1)$ for $n\geq 2$. ~$\square$
\end{proof}

This theorem implies that $W_{c}\left(Q_{2}\right)=4,
W_{c}\left(Q_{3}\right)\geq 8, W_{c}\left(Q_{4}\right)\geq 12$ and
$W_{c}\left(Q_{5}\right)\geq 16$. Moreover, it is not difficult to
see that $F\left(Q_{2}\right)=[2,4]$, $F\left(Q_{3}\right)=[3,8]$,
$[4,12]\subseteq F\left(Q_{4}\right)$ and $[5,16]\subseteq
F\left(Q_{5}\right)$. We strongly believe that the feasible set of
$Q_{n}$ is gap-free, but this is an open problem.

\section{Graphs that have no interval cyclic
edge-coloring}\label{part5}\

In this section we describe two methods for constructing of interval
cyclically non-colorable graphs. Our first method is based on trees
and it was earlier used for constructing of interval non-colorable
graphs in \cite{b28}.

Let $T$ be a tree and $V(T)=\{v_{1},\ldots,v_{n}\}$, $n\geq 2$. Let
$P(v_{i},v_{j})$ be a simple path joining $v_{i}$ and $v_{j}$ in
$T$, $VP(v_{i},v_{j})$ and $EP(v_{i},v_{j})$ denote the sets of
vertices and edges of this path, respectively. Also, let
$L(T)=\{v\colon\,v\in V(T)\wedge d_{T}(v)=1\}$. For a simple path
$P(v_{i},v_{j})$, define $LP(v_{i},v_{j})$ as follows:
\begin{center}
$LP(v_{i},v_{j})=\vert EP(v_{i},v_{j})\vert
+\vert\left\{vw\colon\,vw\in E(T), v\in VP(v_{i},v_{j}), w\notin
VP(v_{i},v_{j})\right\}\vert$.
\end{center}

Define: $M(T)={\max }_{1\leq i<j\leq n}LP(v_{i},v_{j})$. Let us
define the graph $\widetilde{T}$ as follows:
\begin{center}
$V(\widetilde{T})=V(T)\cup \{u\}$, $u\notin V(T)$,
$E(\widetilde{T})=E(T)\cup \{uv\colon\,v\in L(T)\}$.
\end{center}

Clearly, $\widetilde{T}$ is a connected graph with
$\Delta(\widetilde{T})=\vert L(T)\vert$. Moreover, if $T$ is a tree
in which the distance between any two pendant vertices is even, then
$\widetilde{T}$ is a connected bipartite graph.

In \cite{b14}, Kamalian proved the following result.

\begin{theorem}
\label{mytheorem20} If $T$ is a tree, then
\begin{description}
\item[(1)] $T\in \mathfrak{N}_{c}$,

\item[(2)] $w_{c}(T)=\Delta(G)$,

\item[(3)] $W_{c}(T)=M(T)$,

\item[(4)] $F(T)=[w_{c}(T),W_{c}(T)]$.
\end{description}
\end{theorem}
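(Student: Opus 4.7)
My plan is to dispatch parts (1) and (2) from results already in the paper, to get the lower bound of (3) and the containment in (4) from the (separately established) fact $W(T)=M(T)$ for trees together with Theorem \ref{mytheorem11}, and to devote the bulk of the effort to the upper bound $W_{c}(T)\le M(T)$ in (3).

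For (1), I would show by induction on $|V(T)|$ that every tree admits an interval $\Delta(T)$-coloring: delete a leaf $v$ with incident edge $uv$, apply induction to $T-v$ to obtain an interval coloring whose color set at $u$ is an interval of length $d_{T}(u)-1$, and extend that interval by one color to color $uv$. Thus $T\in\mathfrak{N}$, so $T\in\mathfrak{N}_{c}$ by Corollary \ref{mycorollary2}, proving (1); Corollary \ref{mycorollary6} then yields (2).

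For the lower bound $W_{c}(T)\ge M(T)$ and for (4), I would first establish (by the same inductive recipe, but starting from a path $P^{*}$ realising $M(T)$, coloring its edges consecutively with $1,2,\ldots,|EP^{*}|$, and then fanning outward into the branch subtrees so that the cyclic interval around each branch-root is preserved) the analogous interval-coloring result $W(T)=M(T)$. This produces an interval $M(T)$-coloring of $T$, which is in particular an interval cyclic $M(T)$-coloring, giving $W_{c}(T)\ge M(T)$. Combined with Theorem \ref{mytheorem11} and part (2), it further yields $[\Delta(T),M(T)]\subseteq F(T)$, which together with the upper bound of (3) settles (4).

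The core task is therefore $W_{c}(T)\le M(T)$. Given an interval cyclic $t$-coloring $\alpha$ of $T$, I would construct a path $P$ in $T$ with $LP(P)\ge t$ as follows. Starting from an edge $e_{0}$ of color $1$ and one of its endpoints $v_{0}$, trace a walk in $T$: at each vertex $v$ reached, the colors on its $d_{T}(v)$ incident edges form a consecutive cyclic interval, so moving to a different incident edge shifts the color by at most $d_{T}(v)-1$ in some cyclic direction. Walking greedily in one consistent cyclic direction accumulates color changes bounded by $\sum_{v\in V(P)}(d_{T}(v)-1)$. Since all $t$ colors must be realised somewhere in $T$, the walk is forced to traverse the entire cyclic range, which gives $\sum_{v\in V(P)}(d_{T}(v)-1)+1\ge t$, and hence $LP(P)\ge t$ and $M(T)\ge t$. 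The main obstacle is the cyclic wraparound: since colors $1$ and $t$ are cyclically adjacent, a direct application of the argument behind Theorem \ref{mytheorem8} loses a factor of two and yields only $W_{c}(T)\le 2M(T)-1$. Eliminating this factor requires exploiting the uniqueness of paths in $T$ to show that the color-guided walk cannot short-cut via wraparound without forcing the coloring to miss one of the $t$ colors, and then reducing that walk to a simple path in $T$ whose $LP$-value is at least as large. This surgical step---converting a color-guided walk into a simple path $P$ witnessing $LP(P)\ge t$---is the technical heart of the proof.
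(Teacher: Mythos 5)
The paper does not prove this theorem at all: it is quoted from Kamalian's paper \cite{b14} on cyclically-interval edge colorings of trees, so there is no in-paper proof to compare against. Judged on its own merits, your proposal handles (1), (2), the lower bound of (3), and (4) adequately by reduction to $T\in\mathfrak{N}$, $W(T)=M(T)$ and Theorem \ref{mytheorem11}, but it does not actually prove the one statement that carries all the content, namely $W_{c}(T)\leq M(T)$. You correctly identify that the Theorem \ref{mytheorem8} argument only yields $W_{c}(T)\leq 2M(T)-1$ because of cyclic wraparound, and then you describe the needed repair only as ``the technical heart of the proof'' without supplying it. Worse, the intermediate claim that ``since all $t$ colors must be realised somewhere in $T$, the walk is forced to traverse the entire cyclic range, which gives $\sum_{v\in V(P)}(d_{T}(v)-1)+1\geq t$'' is not justified: colors $1$ and $t$ are cyclically adjacent, so a color-guided walk can reach color $t$ from color $1$ in a single step, and nothing in your argument prevents the walk from covering all residues while only accumulating about $t/2$ worth of degree excess. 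As written, the proof of the upper bound is a restatement of the difficulty, not a resolution of it.

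The missing idea is an unwrapping (lifting) argument that uses acyclicity directly rather than a walk. Given an interval cyclic $t$-coloring $\alpha$ of $T$ with $t>\Delta(T)$ (the case $t\leq\Delta(T)$ is trivial since $M(T)\geq\Delta(T)$), lift each color $\alpha(e)\in\mathbb{Z}_{t}$ to an integer $\tilde{\alpha}(e)$ by fixing the lift on one edge and propagating outward: at each vertex $v$ the set $S(v,\alpha)$ is a cyclic interval of length $d_{T}(v)<t$, so once one incident edge's lift is fixed the lifts of the others are forced so that $S(v,\tilde{\alpha})$ is a genuine interval of integers. Because $T$ has no cycles, this propagation never meets a consistency obstruction, so $\tilde{\alpha}$ is well defined. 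Adjacent vertices share the lifted color of their common edge, so their integer spectra overlap; connectivity then makes $\bigcup_{v}S(v,\tilde{\alpha})$ a single interval, and since $\tilde{\alpha}$ reduces mod $t$ to the surjective coloring $\alpha$, that interval has length at least $t$. After translation $\tilde{\alpha}$ is an interval $t'$-coloring of $T$ with $t'\geq t$, whence $t\leq W(T)=M(T)$. This is the step your proposal gestures at (``exploiting the uniqueness of paths in $T$'') but never executes, and without it the theorem is not established.
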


\begin{theorem}
\label{mytheorem21} If $T$ is a tree and $\vert L(T)\vert \geq
2(M(T)+2)$, then $\widetilde{T}\notin \mathfrak{N}_{c}$.
\end{theorem}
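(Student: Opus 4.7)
The plan is to argue by contradiction: suppose $\widetilde{T}$ admits an interval cyclic $t$-coloring $\alpha$, and derive two incompatible estimates on how spread out the colors at $u$ can be on the cyclic palette $\mathbb{Z}/t\mathbb{Z}$. Setting $L:=\vert L(T)\vert$, since $d_{\widetilde{T}}(u)=L$ we must have $t\geq L\geq 2(M(T)+2)$; moreover the $L$ colors $\{\alpha(uv):v\in L(T)\}$ form an arc of $L$ consecutive values modulo $t$, so after cyclically shifting the palette we may assume $\{\alpha(uv):v\in L(T)\}=\{0,1,\ldots,L-1\}$ inside $\mathbb{Z}/t\mathbb{Z}$. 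Write $c(v):=\alpha(uv)$ for each leaf $v$.

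The first key step is an upper bound on the cyclic distance $d_{\mathrm{cyc}}(c(v_i),c(v_j))$ for any two leaves $v_i,v_j\in L(T)$, obtained by walking the unique $v_i$--$v_j$ path $P\colon v_i=w_0,w_1,\ldots,w_k=v_j$ in $T$, in the spirit of the proofs of Theorems~\ref{mytheorem8} and \ref{mytheorem9}. At each internal $w_\ell$ the two path-edges $w_{\ell-1}w_\ell$ and $w_\ell w_{\ell+1}$ have colors in $S(w_\ell,\alpha)$, a window of $d_T(w_\ell)$ consecutive values modulo $t$, so those colors differ by an integer $\delta_\ell$ with $\vert\delta_\ell\vert\leq d_T(w_\ell)-1$; at each leaf endpoint $v_i,v_j$, which has degree $2$ in $\widetilde{T}$, the analogous crossing between $uv_i$ and $w_0w_1$, respectively between $w_{k-1}w_k$ and $uv_j$, contributes $\pm 1$. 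Telescoping, $c(v_j)-c(v_i)\equiv s\pmod{t}$ for a signed integer $s$ with
\[
\vert s\vert\;\leq\;2+\sum_{\ell=1}^{k-1}\bigl(d_T(w_\ell)-1\bigr)\;=\;LP(v_i,v_j)+1\;\leq\; M(T)+1,
\]
where the equality uses that the two leaf endpoints contribute no side-edges in $LP$. Because $t\geq 2(M(T)+2)>2(M(T)+1)$, this signed displacement cannot wrap around the cycle, so in fact $d_{\mathrm{cyc}}(c(v_i),c(v_j))\leq M(T)+1$.

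The second key step is a lower bound via pigeonhole on the arc $\{0,1,\ldots,L-1\}$: choose leaves $v_i,v_j$ with $c(v_i)=0$ and $c(v_j)=\lfloor L/2\rfloor$. Their cyclic distance equals $\min(\lfloor L/2\rfloor,\,t-\lfloor L/2\rfloor)$, and the hypotheses $L\geq 2(M(T)+2)$ and $t\geq L$ give both $\lfloor L/2\rfloor\geq M(T)+2$ and $t-\lfloor L/2\rfloor\geq\lceil L/2\rceil\geq M(T)+2$. Hence $d_{\mathrm{cyc}}(c(v_i),c(v_j))\geq M(T)+2$, contradicting the first step. The main obstacle I anticipate is the careful bookkeeping in the first step that converts the chain of signed modular inequalities along the path into an honest \emph{cyclic}-distance bound; once the slack $t>2(M(T)+1)$ is exploited to rule out wrap-around, both the identity $2+\sum_{\ell=1}^{k-1}(d_T(w_\ell)-1)=LP(v_i,v_j)+1$ and the selection of a ``diametric'' pair of leaves on the arc are routine.
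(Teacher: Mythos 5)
Your proposal is correct and follows essentially the same route as the paper: both arguments walk the unique $T$-path between two leaves whose colors at the apex $u$ are forced far apart, telescope the per-vertex windows to bound the color drift by $LP(v_i,v_j)+1\leq M(T)+1$ (using that the two leaf endpoints have degree $2$ in $\widetilde{T}$), and contradict the spread forced by $\vert L(T)\vert\geq 2(M(T)+2)$. The only cosmetic difference is that the paper fixes $S(u,\alpha)=[1,\vert L(T)\vert]$ and picks the specific colors $1$ and $M(T)+3$, while you choose a diametric pair on the arc and phrase the contradiction via cyclic distance, which if anything handles the modular bookkeeping more cleanly.
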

\begin{proof}
Suppose, to the contrary, that $\widetilde{T}$ has an interval
cyclic $t$-coloring $\alpha$ for some $t\geq \vert L(T)\vert$.

Consider the vertex $u$. Without loss of generality we may assume
that $S(u,\alpha)=[1,\vert L(T)\vert]$. Let $v$ and $v^{\prime}$ be
two vertices adjacent to $u$ such that $\alpha(uv)=1$ and
$\alpha(uv^{\prime})=M(T)+3$. Since $\widetilde{T}-u$ is a tree,
there is a unique path $P(v,v^{\prime})$ in $\widetilde{T}-u$
joining $v$ with $v^{\prime}$, where
\begin{center}
$P(v,v^{\prime})=(x_{0},e_{1},x_{1},\ldots,x_{i-1},e_{i},x_{i},\ldots,x_{k-1},e_{k},x_{k})$,
$x_{0}=v$, $x_{k}=v^{\prime}$.
\end{center}

Since $\alpha$ is an interval cyclic coloring of $G$, for $1\leq
i\leq k$, we have

\begin{center}
either $\alpha(x_{i-1}x_{i})\leq
2+{\sum\limits_{j=0}^{i-1}\left(d_{T}(x_{j})-1\right)}$ or
$\alpha(x_{i-1}x_{i})\geq
t-{\sum\limits_{j=0}^{i-1}\left(d_{T}(x_{j})-1\right)}$.
\end{center}

From this, we have either
\begin{equation}\label{eq:5}
\alpha(x_{k-1}x_{k})=\alpha(x_{k-1}v^{\prime})\leq
2+{\sum\limits_{j=0}^{k-1}\left(d_{T}(v_{j})-1\right)}=1+LP(v,v^{\prime})\leq
1+M(T)
\end{equation}
or
\begin{equation}\label{eq:6}
\alpha(x_{k-1}x_{k})=\alpha(x_{k-1}v^{\prime})\geq
t-{\sum\limits_{j=0}^{k-1}\left(d_{T}(v_{j})-1\right)}=t+1-LP(v,v^{\prime})\geq
t+1-M(T).
\end{equation}

On the other hand, by (\ref{eq:5}), we obtain
$M(T)+3=\alpha(uv^{\prime})\leq 2+M(T)$, which is a contradiction.
Similarly, by (\ref{eq:6}), we obtain
$M(T)+3=\alpha(uv^{\prime})\geq t-M(T)$ and thus $t\leq 2M(T) +3$,
which is a contradiction. ~$\square$
\end{proof}

\begin{corollary}
\label{mycorollary12} If $T$ is a tree in which the distance between
any two pendant vertices is even and $\vert L(T)\vert \geq
2(M(T)+2)$, then the bipartite graph $\widetilde{T}$ has no interval
cyclic coloring.
\end{corollary}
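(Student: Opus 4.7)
The plan is to observe that this corollary is essentially a direct combination of Theorem~\ref{mytheorem21} with the structural observation on $\widetilde{T}$ that was recorded in the paragraph just before Theorem~\ref{mytheorem20}. Concretely, the earlier text notes that if $T$ is a tree in which the distance between any two pendant vertices is even, then $\widetilde{T}$ is a connected bipartite graph. So the bulk of the work has already been done, and the corollary amounts to packaging these two facts together.

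First I would verify, or simply cite, that the parity hypothesis on distances between pendant vertices forces $\widetilde{T}$ to be bipartite. The reason is that adding the new vertex $u$ adjacent to all leaves of $T$ creates cycles only of the form $u$--$v$--$P(v,v')$--$v'$--$u$ for pendant vertices $v,v'\in L(T)$; such a cycle has length $2+\mathrm{dist}_{T}(v,v')$, and by assumption each $\mathrm{dist}_{T}(v,v')$ is even, making every cycle in $\widetilde{T}$ of even length. Thus $\widetilde{T}$ is bipartite.

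Next, the remaining hypothesis $\lvert L(T)\rvert \geq 2(M(T)+2)$ is exactly the hypothesis of Theorem~\ref{mytheorem21}, so that theorem immediately yields $\widetilde{T}\notin \mathfrak{N}_{c}$, i.e., $\widetilde{T}$ admits no interval cyclic coloring. Combining the two conclusions produces a bipartite graph with no interval cyclic edge-coloring, which is the corollary.

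Since no new combinatorial argument is required, I do not anticipate a real obstacle here; the only thing to be careful about is stating explicitly why the distance-parity condition on pendant vertices is equivalent to the bipartiteness of $\widetilde{T}$, since the corollary's value lies precisely in producing \emph{bipartite} examples (for which interval cyclic colorability is a more delicate question, see \cite{b22}). The proof can therefore be given in one or two sentences once the bipartiteness remark is recalled.
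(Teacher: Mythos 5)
Your proposal is correct and is exactly the derivation the paper intends: the corollary is stated without separate proof because it follows immediately from Theorem~\ref{mytheorem21} together with the remark preceding Theorem~\ref{mytheorem20} that the parity condition on pendant-vertex distances makes $\widetilde{T}$ a connected bipartite graph. Your justification of that remark (every cycle of $\widetilde{T}$ must pass through $u$ since $T$ is acyclic, hence has length $2+\mathrm{dist}_{T}(v,v^{\prime})$, which is even) is sound, so nothing is missing.
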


\begin{figure}[h]
\begin{center}
\includegraphics[width=40pc,height=7pc]{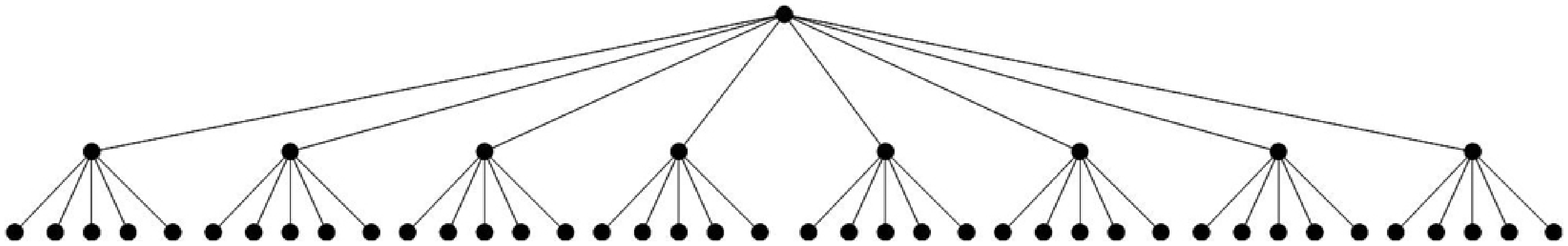}\\
\caption{The tree $T$.}\label{Tree}
\end{center}
\end{figure}

Now let us consider the tree $T$ shown in Fig. \ref{Tree}. Since
$M(T)=18$ and $\vert L(T)\vert=40$, the bipartite graph
$\widetilde{T}$ with $\vert V(\widetilde{T})\vert=50$ and
$\Delta(\widetilde{T})=40$ has no interval cyclic coloring.

The second method which we consider is based on complete graphs and
it was first described in \cite{b23}, but here we prove a more
stronger result.

Let $K_{2n+1}$ be a complete graph on $2n+1$ vertices and
$V(K_{2n+1})=\left\{v_{1},\ldots,v_{2n+1}\right\}$. For any $m,n\in
\mathbb{N}$, define the graph $K_{2n+1}^{\star m}$ as follows:
\begin{center}
$V(K_{2n+1}^{\star m})=V(K_{2n+1})\cup
\left\{u,w_{1},\ldots,w_{m}\right\}$, $E(K_{2n+1}^{\star
m})=E(K_{2n+1})\cup \left\{v_{1}u\right\}\cup
\left\{uw_{i}\colon\,1\leq i\leq m\right\}$.
\end{center}

Clearly, $K_{2n+1}^{\star m}$ is a connected with $\vert
V\left(K_{2n+1}^{\star m}\right)\vert=m+2n+2$ and
$\Delta\left(K_{2n+1}^{\star m}\right)=\max\{m+1,2n+1\}$.

\begin{theorem}
\label{mytheorem22} If $n\geq 2$ and $m\geq 6n$, then
$K_{2n+1}^{\star m}\notin \mathfrak{N}_{c}$.
\end{theorem}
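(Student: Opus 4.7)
My plan is to argue by contradiction, following the strategy of Theorem~\ref{mytheorem21}. Suppose $\alpha$ is an interval cyclic $t$-coloring of $K_{2n+1}^{\star m}$. Since $m\geq 6n\geq 4$ forces $\Delta(K_{2n+1}^{\star m})=m+1$, we have $t\geq m+1\geq 6n+1$; by cyclically shifting the palette I may assume $S(u,\alpha)=[1,m+1]$, and set $c:=\alpha(uv_{1})\in[1,m+1]$.

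The first step is a localization lemma. Writing $S(v_{1},\alpha)=[c-a,c+b]$ (mod $t$) with $a+b=2n$ and $a,b\geq 0$, each color $\alpha(v_{1}v_{i})$ ($i\geq 2$) lies in $[c-a,c+b]\setminus\{c\}$, and for each $i\geq 2$, $S(v_{i},\alpha)$ is a cyclic interval of length $2n$ containing $\alpha(v_{1}v_{i})$. Chaining these inclusions, every color appearing on an edge of $K_{2n+1}$ belongs to the cyclic interval $J=[c-a-2n+1,\,c+b+2n-1]$ of length $6n-1$ around $c$.

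I would then derive the contradiction via a path argument in the spirit of Theorem~\ref{mytheorem21}. Since $m\geq 6n$, pigeonhole guarantees that one of the colors $c+3n$ or $c-3n$ lies in $[1,m+1]\setminus\{c\}$; call it $c^{\star}$, so that $\alpha(uw_{i_{0}})=c^{\star}$ for some $i_{0}$, with cyclic distance exactly $3n$ from $c$. Pick $v_{j}\in\{v_{2},\ldots,v_{2n+1}\}$ with $\alpha(v_{1}v_{j})$ equal to the endpoint of $S(v_{1},\alpha)$ on the side of $c$ opposite to $c^{\star}$, and pick $v_{k}\in V(K_{2n+1})\setminus\{v_{1},v_{j}\}$ so that $\alpha(v_{j}v_{k})$ is the extreme color of $S(v_{j},\alpha)$ on that far side. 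Chaining the cyclic-interval inequalities $|\alpha(e_{i})-\alpha(e_{i+1})|\leq d_{G}(x_{i})-1$ of Theorem~\ref{mytheorem8} along $w_{i_{0}}-u-v_{1}-v_{j}-v_{k}$ yields $|\alpha(uw_{i_{0}})-\alpha(v_{j}v_{k})|\leq m+2n+(2n-1)$; combined with the localization $\alpha(v_{j}v_{k})\in J$, which pins $\alpha(v_{j}v_{k})$ within cyclic distance $4n-1$ of $c$, this should force $|c^{\star}-c|$ to be strictly less than $3n$, contradicting the choice of $c^{\star}$.

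The main obstacle is calibrating the propagation so that every admissible pair $(a,b)$ with $a+b=2n$, and every way of distributing the $2n$ colors of $S(v_{1},\alpha)\setminus\{c\}$ among $v_{2},\ldots,v_{2n+1}$, is ruled out; particular care is needed in the interior case $a,b\geq 1$, where $S(v_{1},\alpha)\setminus\{c\}$ is not itself an interval and so the restriction of $\alpha$ to $K_{2n+1}$ is not directly an interval edge-coloring. The hypothesis $m\geq 6n$ is tight: it is exactly what allows $c^{\star}$ to be placed at cyclic distance $3n$ on the longer side of $c$ inside $[1,m+1]$, matching the $6n-1$ diameter of $J$ and closing the propagation inequalities. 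Handling the minor edge case $c\in\{1,m+1\}$, where only one of $c\pm 3n$ is available, is a secondary technical point.
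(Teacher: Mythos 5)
Your localization step is correct and is essentially a hands-on version of what the paper does: the paper lets $H=K_{2n+1}^{\star m}-w_{1}-\cdots-w_{m}$ (that is, $K_{2n+1}$ together with the pendant edge $uv_{1}$), observes that the colors appearing on $H$ form a cyclic interval of some length $t^{\prime}$, and bounds $t^{\prime}\leq 3\vert V(H)\vert-6=6n$ by Corollary \ref{mycorollary4}; your window $J$ of length $6n-1$ around $c$ achieves the same thing directly. Since $t\geq 6n+1>t^{\prime}$, the window does not wrap, so after a rotation the restriction of $\alpha$ to $H$ becomes a genuine \emph{interval} $t^{\prime}$-coloring of $H$. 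The paper then finishes by citing the result of \cite{b8} that $H\notin\mathfrak{N}$, i.e.\ that $K_{2n+1}$ with one pendant edge attached has no interval coloring. That citation is the entire content of the contradiction, and it is the ingredient your proposal is missing.

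The path argument you propose in its place cannot work. The vertices $w_{i}$ are pendant: each imposes no constraint beyond $\alpha(uw_{i})\in S(u,\alpha)$, and the only constraint relating $\alpha(uw_{i_{0}})$ to $c=\alpha(uv_{1})$ is that both lie in the cyclic interval $S(u,\alpha)$ of length $m+1$, which is satisfied by \emph{any} choice with cyclic distance at most $m$. In Theorem \ref{mytheorem21} the propagation argument succeeds precisely because the two edges $uv$ and $uv^{\prime}$ with far-apart colors at $u$ are joined by a \emph{second} path inside $T$, so the two routes between them form a cycle along which the interval conditions accumulate and clash. In $K_{2n+1}^{\star m}$ the vertex $w_{i_{0}}$ lies on no cycle, so there is no second route, and chaining inequalities along $w_{i_{0}}-u-v_{1}-v_{j}-v_{k}$ passes through $u$ itself, contributing a slack of $d(u)-1=m$ that swallows the quantity $3n$ you are trying to contradict. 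Indeed, your own localization shows the coloring is perfectly consistent on all pendant edges and on $S(u,\alpha)$; the inconsistency lives entirely inside $H$, and detecting it requires the (counting-type, not propagation-type) fact that $K_{2n+1}$ plus a pendant edge is not interval colorable. Without importing that theorem or reproving it, the argument does not close.
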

\begin{proof}
Suppose, to the contrary, that $K_{2n+1}^{\star m}$ has an interval
cyclic $t$-coloring $\alpha$ for some $t\geq d(u)=6n+1$.

Let $H=K_{2n+1}^{\star m}-w_{1}-w_{2}-\cdots-w_{m}$. Also, let
$C=\bigcup_{v\in V(H)}S(v,\alpha)$ and $\overline C=[1,t]\setminus
C$. Since $H$ is connected, it is not difficult to see that either
$C$ or $\overline C$ is an interval of integers. Let $\vert C\vert =
t^{\prime}$. Clearly, $t^{\prime}\leq t$. Now let us consider the
restriction of the coloring $\alpha$ on the edges of the subgraph
$H$ of $K_{2n+1}^{\star m}$. Let $\alpha_{H}$ be this edge-coloring.
By rotating of colors of $C$ along the cycle with colors
$1,\ldots,t$, we get a new edge-coloring $\alpha_{H}^{\prime}$ of
$H$ with colors $1,\ldots,t^{\prime}$. Since $\alpha$ is an interval
cyclic $t$-coloring of $K_{2n+1}^{\star m}$ and taking into account
that the vertex $u$ in $H$ is pendant, we obtain that
$\alpha_{H}^{\prime}$ is an interval cyclic $t^{\prime}$-coloring of
$H$. Moreover, by Corollary \ref{mycorollary4}, we have
$t^{\prime}\leq 3\vert V(H)\vert -6=3(2n+2)-6=6n$. Since $t\geq
6n+1$, we get that $\alpha_{H}^{\prime}$ is also an interval
$t^{\prime}$-coloring of $H$. In \cite{b8}, it was proved that
$H\notin \mathfrak{N}$, so this contradiction proves the theorem.
~$\square$
\end{proof}

\begin{corollary}
\label{mycorollary13} For any integer $d\geq 13$, there exists a
connected graph $G$ such that $G\notin \mathfrak{N}_{c}$ and
$\Delta(G)=d$.
\end{corollary}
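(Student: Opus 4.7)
The plan is to apply Theorem \ref{mytheorem22} with the smallest admissible choice $n=2$, taking $m=d-1$. The construction $K_{2n+1}^{\star m}$ was tailored so that its maximum degree is $\max\{m+1,2n+1\}$, so with $n=2$ and $m=d-1$ we have
\[
\Delta\bigl(K_{5}^{\star(d-1)}\bigr)=\max\{d,\,5\}=d
\]
whenever $d\geq 13$, which gives the required degree.

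To verify that Theorem \ref{mytheorem22} actually applies, I would check its two hypotheses: $n\geq 2$ is immediate, and $m\geq 6n$ becomes $d-1\geq 12$, i.e. $d\geq 13$, which is exactly the range in the corollary. Thus $K_{5}^{\star(d-1)}\notin\mathfrak{N}_{c}$, and the graph is connected by construction (as noted in the paragraph just before Theorem \ref{mytheorem22}).

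So the entire proof reduces to setting $G:=K_{5}^{\star(d-1)}$ and invoking Theorem \ref{mytheorem22}; there is no serious obstacle, only the arithmetic check that the threshold $d\geq 13$ lines up with $m\geq 6n$ at $n=2$. If one wanted sharper control on $\Delta$ for smaller $d$, one would need a different family (since for $n=2$ the degree $d=m+1$ jumps from below by the condition $m\geq 12$); but for the stated range $d\geq 13$, this single family suffices.
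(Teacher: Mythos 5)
Your proof is correct and is exactly the intended derivation: the paper states Corollary \ref{mycorollary13} as an immediate consequence of Theorem \ref{mytheorem22}, obtained by taking $n=2$ and $m=d-1$, so that $m\geq 6n$ becomes $d\geq 13$ and $\Delta\bigl(K_{5}^{\star(d-1)}\bigr)=\max\{d,5\}=d$. Your arithmetic check of the threshold matches the paper's implicit argument precisely.
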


Now we show that $K_{5}^{\star 11}\notin \mathfrak{N}_{c}$. Note
that $\vert V\left(K_{5}^{\star 11}\right)\vert=17$ and
$\Delta\left(K_{5}^{\star 11}\right)=12$. Suppose that, to the
contrary, that $K_{5}^{\star 11}$ has an interval cyclic
$t$-coloring $\alpha$ for some $t\geq 12$. Similarly as in the proof
of Theorem \ref{mytheorem22}, we can consider the subgraph
$H=K_{5}^{\star 11}-w_{1}-w_{2}-\cdots-w_{11}$ of $K_{5}^{\star
11}$. Let $t^{\prime}=\left\vert\bigcup_{v\in
V(H)}S(v,\alpha)\right\vert$ and $\alpha_{H}$ be the restriction of
the coloring $\alpha$ on the edges of the subgraph $H$ of
$K_{5}^{\star 11}$. Then, let $\alpha_{H}^{\prime}$ be the
edge-coloring of $H$ with colors $1,\ldots,t^{\prime}$. Since
$\alpha$ is an interval cyclic $t$-coloring of $K_{5}^{\star 11}$
and taking into account that the vertex $u$ in $H$ is pendant, it
can be easily seen that $\alpha_{H}^{\prime}$ is an interval cyclic
$t^{\prime}$-coloring of $H$, where $t^{\prime}\leq t$. Clearly,
$t^{\prime}\leq \vert E(H)\vert=11$. Since $t\geq 12$, we obtain
that $\alpha_{H}^{\prime}$ is also an interval $t^{\prime}$-coloring
of $H$, which is a contradiction. From here, we get the following

\begin{corollary}
\label{mycorollary14} For any integer $d\geq 12$, there exists a
connected graph $G$ such that $G\notin \mathfrak{N}_{c}$ and
$\Delta(G)=d$.
\end{corollary}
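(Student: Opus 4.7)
The plan is to combine the discussion immediately preceding this corollary with the already-established Corollary \ref{mycorollary13}. Corollary \ref{mycorollary13} covers every integer $d \geq 13$ (via $K_{5}^{\star(d-1)}$, which satisfies the hypothesis $m = d-1 \geq 12 = 6n$ of Theorem \ref{mytheorem22} at $n = 2$), so it remains only to treat the single case $d = 12$.

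For $d = 12$, I would take $G = K_{5}^{\star 11}$; this graph is connected, has $17$ vertices, and $\Delta(G) = \max\{11+1,\,2\cdot 2 + 1\} = 12$. The non-trivial step is showing $K_{5}^{\star 11} \notin \mathfrak{N}_{c}$, which is exactly the argument given in the paragraph just above, to be carried out in full: assume for contradiction an interval cyclic $t$-coloring $\alpha$ with $t \geq \Delta(G) = 12$, and restrict $\alpha$ to the subgraph $H = K_{5}^{\star 11} - w_{1} - \cdots - w_{11}$, namely $K_{5}$ with a single pendant edge $v_{1}u$. The set of colors appearing on $E(H)$ has cardinality $t' \leq |E(H)| = 11$. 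Since $t \geq 12 > t'$, some color $c \in [1,t]$ appears on no edge of $H$; cutting the cyclic color-order at $c$ and relabeling the used colors in order as $1,\ldots,t'$ turns every cyclic color-interval $S(v,\alpha|_{H})$ into an ordinary interval (and the pendant vertex $u$ of $H$ causes no trouble, since its color set is a singleton). This produces an interval $t'$-coloring of $H$, i.e.\ $H \in \mathfrak{N}$.

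The final contradiction is that $H \notin \mathfrak{N}$: since $\chi'(K_{5}) = 5 = \Delta(K_{5})$ and every proper $5$-edge-coloring of $K_{5}$ uses all five colors at each vertex, the pendant edge at $v_{1}$ forces a sixth color, so $\chi'(H) = 6 > 5 = \Delta(H)$, and Theorem \ref{mytheorem1} then denies $H$ an interval coloring (this is the fact from \cite{b8} invoked in the proof of Theorem \ref{mytheorem22}). The only genuine subtlety is the rotation/cut step — one must verify that a single unused color in $[1,t]$ suffices to linearize every cyclic interval $S(v,\alpha|_{H})$ simultaneously — but this is immediate because every such cyclic interval avoids $c$ by the choice of $c$. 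Combining the $d = 12$ construction with Corollary \ref{mycorollary13} for $d \geq 13$ finishes the proof.
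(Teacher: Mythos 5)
Your overall architecture is exactly the paper's: Corollary \ref{mycorollary13} disposes of $d\geq 13$, the case $d=12$ is handled by $K_{5}^{\star 11}$ (connected, $\Delta=12$), and the reduction to the subgraph $H=K_{5}^{\star 11}-w_{1}-\cdots-w_{11}$ (that is, $K_{5}$ with one pendant edge $v_{1}u$), the count $t^{\prime}\leq \vert E(H)\vert=11<12\leq t$, and the cut-and-relabel step converting the restricted cyclic coloring into an interval $t^{\prime}$-coloring of $H$ are all correct and coincide with the paper's argument, including your (justified) remark that a single color unused on $E(H)$ suffices to linearize every cyclic spectrum simultaneously.

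The genuine gap is in your justification of the final contradiction, i.e.\ of $H\notin\mathfrak{N}$. You claim $\chi^{\prime}(K_{5})=5=\Delta(K_{5})$ and that every proper $5$-edge-coloring of $K_{5}$ uses all five colors at each vertex, and conclude $\chi^{\prime}(H)=6>\Delta(H)$. This is false: $\Delta(K_{5})=4$, so each vertex of $K_{5}$ sees only four of the five colors and misses exactly one; in the round-robin coloring that assigns $v_{i}v_{j}$ the color $i+j \pmod 5$ the missed colors are pairwise distinct, so the pendant edge at $v_{1}$ can simply receive the color missing there, giving $\chi^{\prime}(H)=5=\Delta(H)$. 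Consequently Theorem \ref{mytheorem1} yields no obstruction whatsoever, and $H\notin\mathfrak{N}$ cannot be deduced from the chromatic index. It is a strictly deeper fact --- the result of \cite{b8} that $K_{2n+1}$ with a single pendant edge attached is not interval colorable (equivalently, that the deficiency of $K_{2n+1}$ exceeds $1$ for $n\geq 2$) --- and the paper invokes it by citation in the proof of Theorem \ref{mytheorem22}. Your argument is repaired by replacing your derivation of $\chi^{\prime}(H)=6$ with that citation; as written, the last step of your proof rests on false statements.
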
\

\section{Problems and Conjectures}\

In this section we collected different problems and conjectures that
arose in previous sections. Our first conjectures concern the
parameters $w_{c}(G)$ and $W_{c}(G)$ of an interval cyclically
colorable graph $G$. In section \ref{part1}, we proved that if $G$
is a connected triangle-free graph with at least two vertices and
$G\in \mathfrak{N}_{c}$, then $W_{c}(G)\leq \vert V(G)\vert
+\Delta(G)-2$. However, we think that the maximum degree in the
upper bound can be omitted; more precisely we believe that the
following is true:

\begin{conjecture}\label{myconjecture1}
If $G$ is a connected triangle-free graph and $G\in
\mathfrak{N}_{c}$, then $W_{c}(G)\leq \vert V(G)\vert$.
\end{conjecture}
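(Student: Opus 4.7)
The plan is to refine the proof of Theorem~\ref{mytheorem6} by replacing its edge-removal step with a vertex-splitting construction. Let $\alpha$ be an interval cyclic $t$-coloring of $G$ with $t=W_{c}(G)$, and let $S\subseteq V(G)$ denote the set of wrap-around vertices, i.e., those $v$ with $S(v,\alpha)=[1,k_{v}]\cup[t-l_{v}+1,t]$ for some $k_{v},l_{v}\geq 1$. If $S=\emptyset$, then $\alpha$ is itself an interval coloring and Theorem~\ref{mytheorem2} immediately yields $t\leq |V(G)|-1$.

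Assume $S\neq\emptyset$. Form an auxiliary graph $G^{*}$ by splitting each $v\in S$ into two new vertices $v^{+}$ (incident to the edges at $v$ colored in $[1,k_{v}]$) and $v^{-}$ (incident to the edges at $v$ colored in $[t-l_{v}+1,t]$), leaving all other vertices and edges unchanged. Three easy observations: (i) the restriction of $\alpha$ is a proper interval $t$-coloring of $G^{*}$, so $G^{*}\in\mathfrak{N}$; (ii) $|V(G^{*})|=|V(G)|+|S|$; and (iii) $G^{*}$ is triangle-free, because any triangle through $v^{\pm}$ in $G^{*}$ lifts to a triangle through $v$ in $G$. Applying Lemma~\ref{mylemma1} to $G^{*}$ then gives
\[
t \;\leq\; |V(G^{*})|-c(G^{*}) \;=\; |V(G)|+|S|-c(G^{*}),
\]
so the conjecture reduces to the structural claim $c(G^{*})\geq |S|$.

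The main obstacle is that $c(G^{*})\geq |S|$ is \emph{not} true for an arbitrary coloring: one can exhibit interval cyclic $5$-colorings of the theta-graph $\Theta_{3,3,3}$ in which both branching vertices wrap but $G^{*}$ is connected. To circumvent this, I would optimize over $\alpha$ and over cyclic rotations of its colors, choosing the representative that minimizes $|S|$. A simple averaging over the $t$ possible rotations shows that some rotation has $|S|\leq (2|E(G)|-|V(G)|)/t$, which is at most $1$ whenever $t>2|E(G)|-|V(G)|$; in that regime the splitting construction immediately yields $t\leq |V(G)|$. The hard case is the complementary regime $|V(G)|<t\leq 2|E(G)|-|V(G)|$, which in particular forces $|E(G)|>|V(G)|$. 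Here one must exploit triangle-freeness more substantially, for example via Kempe-type exchanges across the ``cut'' colors that either reduce $|S|$ or create extra components in $G^{*}$, or by a shortest-path argument in the spirit of Theorem~\ref{mytheorem8} applied simultaneously to edges of colors $1$ and $t$ around the cyclic color wheel. Turning either of these heuristics into a uniform argument across all values of $|S|$ is the technical heart of the proof, and I would expect this to be the main obstacle.
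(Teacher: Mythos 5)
This statement is Conjecture~\ref{myconjecture1} of the paper: the authors state it as an open problem and offer no proof (the best they establish is the weaker bound $W_{c}(G)\leq \vert V(G)\vert+\Delta(G)-2$ of Theorem~\ref{mytheorem6}), so there is no argument in the paper to compare yours against. Judged on its own, your proposal is not a proof. The sound parts are the setup: the vertex-splitting construction does produce a triangle-free graph $G^{*}$ with an interval $t$-coloring and $\vert V(G^{*})\vert=\vert V(G)\vert+\vert S\vert$, the reduction via Lemma~\ref{mylemma1} to the inequality $c(G^{*})\geq \vert S\vert$ is correct, and so is the rotation-averaging identity $\sum_{r}\vert S_{r}\vert=\sum_{v}(d_{G}(v)-1)=2\vert E(G)\vert-\vert V(G)\vert$ (indeed, when $t>2\vert E(G)\vert-\vert V(G)\vert$ some rotation has $S=\emptyset$ and Theorem~\ref{mytheorem2} finishes immediately).

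The gap is the entire regime $\vert V(G)\vert<t\leq 2\vert E(G)\vert-\vert V(G)\vert$, and this is not a boundary case but the heart of the matter: for dense triangle-free graphs such as $K_{n,n}$ the quantity $2\vert E(G)\vert-\vert V(G)\vert$ is quadratic in $\vert V(G)\vert$ while the conjectured bound is linear, so the averaging step disposes of only a thin slice of the problem. In the remaining range your key inequality $c(G^{*})\geq\vert S\vert$ is, as you yourself observe, simply false for an arbitrary coloring, and minimizing $\vert S\vert$ over rotations does not repair it --- a rotation minimizing $\vert S\vert$ need not maximize $c(G^{*})$, and no argument is given that any rotation (or any recoloring reachable by Kempe exchanges) achieves $\vert V(G^{*})\vert-c(G^{*})\leq\vert V(G)\vert$. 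The two remedies you sketch, ``Kempe-type exchanges across the cut colors'' and ``a shortest-path argument in the spirit of Theorem~\ref{mytheorem8},'' are named techniques rather than arguments: the first has no specified invariant showing that an exchange either shrinks $S$ or disconnects $G^{*}$, and the second, as used in Theorem~\ref{mytheorem8}, bounds $W_{c}$ by degree sums along paths, which does not obviously compare to $\vert V(G)\vert$. So what you have is a correct reduction plus a correct treatment of very large $t$, with the main case left open --- consistent with the statement's status as a conjecture.
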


Note that if Conjecture \ref{myconjecture1} is true, then this upper
bound cannot be improved, since $W_{c}(K_{m,n})\geq m+n$
($\min\{m,n\}\geq 2$), by Corollary \ref{mycorollary10}. We also
proved that if $G$ is a connected graph with at least two vertices
and $G\in \mathfrak{N}_{c}$, then $W_{c}(G)\leq 2\vert V(G)\vert
+\Delta(G)-4$. We again think that the maximum degree in this upper
bound can be omitted; more precisely we believe that the following
is true:

\begin{conjecture}\label{myconjecture2}
If $G$ is a connected graph with at least two vertices and $G\in
\mathfrak{N}_{c}$, then $W_{c}(G)\leq 2\vert V(G)\vert-3$.
\end{conjecture}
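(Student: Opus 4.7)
The plan is to sharpen the proof of Theorem~\ref{mytheorem7} by exploiting the full cyclic symmetry of the coloring. Let $\alpha$ be an interval cyclic $W_{c}(G)$-coloring of $G$ and set $t=W_{c}(G)$. If some rotation of $\alpha$ has no wrapping vertex, then $\alpha$ can be re-read as an ordinary interval $t$-coloring of $G$, so Theorem~\ref{mytheorem3} (or Theorem~\ref{mytheorem4} when $|V(G)|\geq 3$) immediately yields $t\leq 2|V(G)|-3$. Thus we may assume that every one of the $t$ rotations of $\alpha$ forces some vertex to wrap, and among those rotations we single out one that minimizes the maximum wrap $l^{\star}=\max_{v}l_{v}$. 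It is to this minimizing rotation that we apply the Theorem~\ref{mytheorem7} construction.

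As in that proof, let $H$ be the spanning subgraph obtained by deleting every edge whose color lies in $[t-l^{\star}+1,t]$; the restriction of $\alpha$ to $H$ is an interval $(t-l^{\star})$-coloring of $H$, so Lemma~\ref{mylemma2} gives
\[
W_{c}(G)-l^{\star}\;\leq\; 2|V(G)|-3\,c(H).
\]
The desired bound $W_{c}(G)\leq 2|V(G)|-3$ is therefore equivalent to the purely structural inequality
\[
3\bigl(c(H)-1\bigr)\;\geq\; l^{\star},
\]
i.e., deleting the top $l^{\star}$ colors from $G$ must produce at least $1+\lceil l^{\star}/3\rceil$ components. The whole proof of the conjecture reduces to this claim.

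The genuine difficulty lies in this last step, and I expect it to be the main obstacle. The intuition is that if $l^{\star}$ cannot be reduced by any further rotation, then each of the $l^{\star}$ deleted color classes must contain an edge crossing the cyclic gap at a distinct region of $G$, so the deleted edges cannot all concentrate in a single component of $H$: otherwise a shortest-path telescoping argument in the spirit of Theorem~\ref{mytheorem8}, performed inside that component, would exhibit a rotation with strictly smaller $l^{\star}$, contradicting minimality. Making this precise will almost certainly require a hybrid of the shortest-path technique of Theorems~\ref{mytheorem8} and~\ref{mytheorem9} with a color-exchange argument along alternating walks that reduces $l^{\star}$ whenever $c(H)$ is too small. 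A reasonable fallback, if the clean inequality $3(c(H)-1)\geq l^{\star}$ proves too strong in full generality, is to verify the conjecture first under $\Delta(G)\leq C$ for small constants $C$, where Theorem~\ref{mytheorem7} is already close to the target, and then to extend by induction on $\Delta(G)$ or by a structural analysis of the extremal cases (noting that the bound is tight only for $K_{2}$ and $K_{3}$, which severely restricts the possible extremal graphs).
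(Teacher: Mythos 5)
This statement is one of the paper's open conjectures: the authors prove only the weaker bound $W_{c}(G)\leq 2\vert V(G)\vert+\Delta(G)-4$ (Theorem~\ref{mytheorem7}) and explicitly leave Conjecture~\ref{myconjecture2} unresolved, so there is no proof in the paper to compare against. Your proposal does not close it either. The reduction in your first two steps is sound arithmetic: with $H$ obtained by deleting the top $l^{\star}$ colors, $W_{c}(G)-l^{\star}\leq W(H)\leq 2\vert V(H)\vert-3c(H)$ does yield the conjecture provided $l^{\star}\leq 3\bigl(c(H)-1\bigr)$. But that inequality is exactly where the entire difficulty sits, you state it without proof, and you acknowledge as much. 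The heuristic offered --- that the $l^{\star}$ deleted color classes must ``cross the cyclic gap at distinct regions'' and hence disconnect $H$ --- is not supported by any argument and is not obviously true: a single vertex $v_{0}$ of degree $\Delta(G)$ can by itself force $l_{v_{0}}=l^{\star}=\Delta(G)-1$, with every edge colored in $[t-l^{\star}+1,t]$ incident to $v_{0}$ or to its immediate neighborhood, in which case deleting those edges can easily leave $H$ connected ($c(H)=1$) while $l^{\star}$ is large. Minimizing $l^{\star}$ over rotations does not visibly rule this out, because a rotation changes which vertices wrap globally, and showing that every rotation is ``blocked'' in a different component of $H$ is a substantially new structural statement, not a corollary of the telescoping arguments in Theorems~\ref{mytheorem8} and~\ref{mytheorem9}.

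There is also a technical flaw in the reduction itself: Lemma~\ref{mylemma2} (applied componentwise) requires each component to have at least two vertices, since the per-component bound $2n_i-3$ is negative for an isolated vertex while such a component contributes nothing to the color span. Vertices $v$ with $S(v,\alpha)\subseteq[t-l^{\star}+1,t]$ (possible whenever $d_{G}(v)\leq l^{\star}$) become isolated in $H$ and inflate $c(H)$ without helping, so the inequality you actually need is $l^{\star}\leq 3(c'-1)$ where $c'$ counts only the components of $H$ containing edges --- a strictly harder claim. The fallback you mention (induction on $\Delta(G)$ or analysis of extremal cases) is a research program, not a proof. In short: the proposal is a plausible reduction plus an unproven key lemma, and the conjecture remains open.
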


It is worth noting that there exists a connection between Conjecture
\ref{myconjecture1} and Conjecture \ref{myconjecture2}. If one can
prove that Conjecture \ref{myconjecture1} is true, then we able to
show that $W_{c}(G)\leq 2\vert V(G)\vert-1$ for an interval
cyclically colorable connected graph $G$.\\

It is known that all regular graphs $G$ are interval cyclically
colorable and $w_{c}(G)=\chi^{\prime}(G)$. Moreover, if $G$ is
interval colorable, then $G$ is interval cyclically colorable and
$w_{c}(G)=\chi^{\prime}(G)=\Delta(G)$. On the other hand, in section
\ref{part1}, it was shown that there are many interval cyclically
colorable graphs $G$ for which $w_{c}(G)>\chi^{\prime}(G)$. So, it
is interesting to investigate the following

\begin{problem}\label{myproblem1}
Characterize all interval cyclically colorable graphs $G$ for which
$w_{c}(G)=\chi^{\prime}(G)$.
\end{problem}

In section \ref{part1}, we also investigated the feasible sets of
interval cyclically colorable graphs. In particular, we proved that
if $G$ is interval colorable, then
$\left[\Delta(G),W(G)\right]\subseteq F(G)$. On the other hand, we
gave some examples of interval cyclically graphs $G$ for which
$F(G)$ is not gap-free. So, it is interesting to investigate the
following

\begin{problem}\label{myproblem2}
Characterize all interval cyclically colorable graphs $G$ for which
$F(G)$ is gap-free.
\end{problem}

For example, we know that if $T$ is a tree, then $F(T)$ is gap-free
\cite{b14}, but we also strongly believe that for any $m,n\in
\mathbb{N}$, $F(K_{2n})$, $F(K_{m,n})$ and $F(Q_{n})$ are
gap-free.\\

In sections \ref{part2} and \ref{part3}, we investigated interval
cyclic colorings of complete, complete bipartite and tripartite
graphs, but the following problems are still open:

\begin{problem}\label{myproblem3}
What is the exact value of $W_{c}\left(K_{n}\right)$ for any $n\in
\mathbb{N}$?
\end{problem}

\begin{problem}\label{myproblem4}
What are the exact values of $w_{c}\left(K_{l,m,n}\right)$ and
$W_{c}\left(K_{m,n}\right)$, $W_{c}\left(K_{l,m,n}\right)$ for any
$l,m,n\in \mathbb{N}$?
\end{problem}

In sections \ref{part2} and \ref{part3}, we proved that all complete
bipartite and tripartite graphs are interval cyclically colorable,
but we think that a more general result is true:

\begin{conjecture}\label{myconjecture3}
All complete multipartite graphs are interval cyclically colorable.
\end{conjecture}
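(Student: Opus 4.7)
The plan is to give a direct construction of an interval cyclic edge-coloring of $K_{n_{1},\ldots,n_{k}}$ that generalizes the additive schemes behind Theorems \ref{mytheorem18} and \ref{mytheorem19}, and then, if necessary, to compress the palette so that every color is used. Set $N=n_{1}+\cdots+n_{k}$, put $N_{s}=n_{1}+\cdots+n_{s}$ with $N_{0}=0$, write $V_{s}=\{v^{(s)}_{1},\ldots,v^{(s)}_{n_{s}}\}$, and assign each vertex a \emph{position} $\mathrm{pos}(v^{(s)}_{i})=N_{s-1}+i$ in $\{1,\ldots,N\}$, so that each part $V_{s}$ occupies the contiguous block $[N_{s-1}+1,N_{s}]$. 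The candidate coloring is
\[
\alpha(uv)=\bigl((\mathrm{pos}(u)+\mathrm{pos}(v)-2)\bmod N\bigr)+1
\]
for every edge $uv$ of $K_{n_{1},\ldots,n_{k}}$.

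Next I would verify properness and the cyclic interval property at every vertex. Properness is immediate: at any vertex $u$ with $\mathrm{pos}(u)=p$, the color assigned to the incident edge $uv$ depends injectively on $\mathrm{pos}(v)$ modulo $N$. For the cyclic interval property at $u\in V_{s}$, note that the neighbors of $u$ are exactly the vertices whose positions lie in $\{1,\ldots,N\}\setminus[N_{s-1}+1,N_{s}]$, i.e.\ in the complement of a contiguous block in $\mathbb{Z}/N\mathbb{Z}$; this complement is itself a cyclic interval of length $N-n_{s}=d(u)$. Since the map $q\mapsto((p+q-2)\bmod N)+1$ is a cyclic-order-preserving bijection of $\mathbb{Z}/N\mathbb{Z}$, it sends this cyclic interval to a cyclic interval in $\{1,\ldots,N\}$ of length $d(u)$. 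Hence the colors appearing at $u$ form $d(u)$ consecutive colors modulo $N$, as required.

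The last step is to shrink the palette. Small cases such as $K_{2,2}$ already show that some colors in $\{1,\ldots,N\}$ may be unused, so $\alpha$ is not literally an interval cyclic $N$-coloring in the sense of the paper. Let $C=\{c_{1}<c_{2}<\cdots<c_{t'}\}$ be the set of colors actually appearing under $\alpha$, and define $\tilde\alpha(e)=i$ whenever $\alpha(e)=c_{i}$. The key observation is that the map $c_{i}\mapsto i$ is a cyclic-order-preserving bijection from $C\subseteq\mathbb{Z}/N\mathbb{Z}$ onto $\mathbb{Z}/t'\mathbb{Z}$, so any subset of $C$ that is a cyclic interval of $\{1,\ldots,N\}$ is sent to a cyclic interval of $\{1,\ldots,t'\}$. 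Since the colors at each vertex automatically lie in $C$ and already form such a cyclic interval, $\tilde\alpha$ is an interval cyclic $t'$-coloring of $K_{n_{1},\ldots,n_{k}}$, whence $K_{n_{1},\ldots,n_{k}}\in\mathfrak{N}_{c}$.

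The main obstacle is the compression step: one must verify that compressing the palette genuinely preserves the cyclic interval property, and this uses precisely the fact that at each vertex the colors already form a cyclic interval inside $\{1,\ldots,N\}$ that lies entirely in $C$. Once this is established the rest of the argument is essentially bookkeeping about the affine map $q\mapsto((p+q-2)\bmod N)+1$ and the contiguous-block layout of the parts.
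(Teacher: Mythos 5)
Your statement is Conjecture~\ref{myconjecture3}: the paper gives no proof of it, establishing only the bipartite and tripartite cases (Theorems~\ref{mytheorem18} and~\ref{mytheorem19}) by separate explicit constructions, so there is nothing to compare against and your argument must stand on its own. As far as I can check it does, and it would settle the conjecture affirmatively while subsuming both of those theorems. Properness holds because at a fixed vertex the color is an injective affine function of the neighbor's position over a complete residue system mod $N$; the neighborhood of a vertex of $V_{s}$ occupies the complement of the block $[N_{s-1}+1,N_{s}]$, which is a cyclic interval of length $d(u)=N-n_{s}$, and the translation $q\mapsto q+(p-1)$ of $\mathbb{Z}/N\mathbb{Z}$ carries it to a cyclic interval of colors. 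The one delicate step is the compression, and your justification is the right one, though you should state the key lemma explicitly: if a cyclic interval $S$ of $\{1,\ldots,N\}$ is contained in $C$, then any two integers of $S$ that are consecutive mod $N$ are also consecutive in the cyclic order of $C$ (no element of $C$ can lie strictly between them), so the rank map sends $S$ to $|S|$ consecutive residues mod $t'$; the wrap-around case, where $S=[a,N]\cup[1,b]$ maps onto $\{t'-N+a,\ldots,t'\}\cup\{1,\ldots,b\}$, is the case a reader will want spelled out. Since $d(u)=\vert S(u,\alpha)\vert\leq\vert C\vert=t'$ for every $u$, no vertex runs out of colors after compression. I checked the construction on $K_{1,1,3}$ (it reproduces an interval cyclic $5$-coloring), on $K_{2,2,2}$, and on $K_{2,2}$ (color $2$ is unused and compression yields an interval cyclic $3$-coloring of $C_{4}$, consistent with Theorem~\ref{mytheorem13}). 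As a byproduct you obtain $w_{c}(K_{n_{1},\ldots,n_{k}})\leq n_{1}+\cdots+n_{k}$, matching the bound of Theorem~\ref{mytheorem19}.
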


In section \ref{part4}, we investigated interval cyclic colorings of
hypercubes $Q_{n}$ and proved that
$W_{c}\left(Q_{n}\right)=O(n^{2})$, but the following problem
remains open:

\begin{problem}\label{myproblem5}
What is the exact value of $W_{c}\left(Q_{n}\right)$ for any $n\in
\mathbb{N}$?
\end{problem}

In \cite{b23}, Nadolski showed that if $G$ is a connected graph with
$\Delta(G)=3$, then $G\in \mathfrak{N}_{c}$ and $w_{c}(G)\leq 4$.
From here and taking into account that all simple paths and cycles
are interval cyclically colorable, we obtain that all subcubic
graphs are interval cyclically colorable. On the other hand, in
section \ref{part5}, we proved that for any integer $d\geq 12$,
there exists a connected graph $G$ such that $G\notin
\mathfrak{N}_{c}$ and $\Delta(G)=d$. So, it is naturally to consider
the following

\begin{problem}\label{myproblem6}
Is there a connected graph $G$ such that $4\leq \Delta(G)\leq 11$
and $G\notin \mathfrak{N}_{c}$?
\end{problem}

\begin{acknowledgement}
We would like to thank the organizers of 7-th Cracow conference on
Graph Theory \textquotedblleft Rytro' 14\textquotedblright for the
nice environment and working atmosphere at the conference.
\end{acknowledgement}


\begin{thebibliography}{99}

\bibitem{b1} A.S. Asratian, R.R. Kamalian, Interval colorings of edges of a
multigraph, Appl. Math. 5 (1987) 25-34 (in Russian).

\bibitem{b2} A.S. Asratian, R.R. Kamalian, Investigation on interval
edge-colorings of graphs, J. Combin. Theory Ser. B 62 (1994) 34-43.

\bibitem{b3} A.S. Asratian, T.M.J. Denley, R. Haggkvist, Bipartite Graphs and
their Applications, Cambridge University Press, Cambridge, 1998.

\bibitem{b4} M.A. Axenovich, On interval colorings of planar graphs, Congr.
Numer. 159 (2002) 77-94.

\bibitem{b5} K. Giaro, M. Kubale, Consecutive edge-colorings of complete and
incomplete Cartesian products of graphs, Congr, Numer. 128 (1997)
143-149.

\bibitem{b6} K. Giaro, M. Kubale, Compact scheduling of zero-one time
operations in multi-stage systems, Discrete Appl. Math. 145 (2004)
95-103.

\bibitem{b7} K. Giaro, M. Kubale, M. Malafiejski, On the deficiency of
bipartite graphs, Discrete Appl. Math. 94 (1999) 193-203.

\bibitem{b8} K. Giaro, M. Kubale, M. Malafiejski, Consecutive colorings of
the edges of general graphs, Discrete Math. 236 (2001) 131-143.

\bibitem{b9} A. Grzesik, H. Khachatrian, Interval edge-colorings of
$K_{1,m,n}$, Discrete Appl. Math. 174 (2014) 140-145.

\bibitem{b10} H.M. Hansen, Scheduling with minimum waiting periods, Master's
Thesis, Odense University, Odense, Denmark, 1992 (in Danish).

\bibitem{b11} D. Hanson, C.O.M. Loten, B. Toft, On interval colorings of
bi-regular bipartite graphs, Ars Combin. 50 (1998) 23-32.

\bibitem{b12} R.R. Kamalian, Interval colorings of complete bipartite graphs
and trees, preprint, Comp. Cen. of Acad. Sci. of Armenian SSR,
Erevan, 1989 (in Russian).

\bibitem{b13} R.R. Kamalian, Interval edge colorings of graphs, Doctoral
Thesis, Novosibirsk, 1990.

\bibitem{b14} R.R. Kamalian, On cyclically-interval edge colorings of trees,
Buletinul of Academy of Sciences of the Republic of Moldova,
Matematica 1(68) (2012) 50-58.

\bibitem{b15} R.R. Kamalian, On a number of colors in cyclically interval
edge colorings of simple cycles, Open J. Discrete Math. 3 (2013)
43-48.

\bibitem{b16} R.R. Kamalian, A.N. Mirumian, Interval edge colorings of
bipartite graphs of some class, Dokl. NAN RA, 97 (1997) 3-5 (in
Russian).

\bibitem{b17} R.R. Kamalian, P.A. Petrosyan, Interval colorings of some
regular graphs, Math. Probl. Comput. Sci. 25 (2006) 53-56.

\bibitem{b18} R.R. Kamalian, P.A. Petrosyan, A note on interval edge-colorings of graphs,
Math. Probl. Comput. Sci. 36 (2012) 13-16.

\bibitem{b19} R.R. Kamalian, P.A. Petrosyan, A note on upper bounds for the
maximum span in interval edge-colorings of graphs, Discrete Math.
312 (2012) 1393-1399.

\bibitem{b20} A. Kotzig, $1$-Factorizations of cartesian products of regular
graphs, J. Graph Theory 3 (1979) 23-34.

\bibitem{b21} M. Kubale, Graph Colorings, American Mathematical Society, 2004.

\bibitem{b22} M. Kubale, A. Nadolski, Chromatic scheduling in a
cyclic open shop, European J. Oper. Res. 164 (2005) 585-591.

\bibitem{b23} Nadolski, Compact cyclic edge-colorings of graphs, Discrete
Math. 308 (2008) 2407-2417.

\bibitem{b24} P.A. Petrosyan, Interval edge-colorings of complete graphs and
$n$-dimensional cubes, Discrete Math. 310 (2010) 1580-1587.

\bibitem{b25} P.A. Petrosyan, Interval edge colorings of some products of graphs,
Discuss. Math. Graph Theory 31(2) (2011) 357-373.

\bibitem{b26} P.A. Petrosyan, H.H. Khachatrian, H.G. Tananyan, Interval
edge-colorings of Cartesian products of graphs I, Discuss. Math.
Graph Theory 33(3) (2013) 613-632.

\bibitem{b27} P.A. Petrosyan, H.H. Khachatrian, L.E. Yepremyan, H.G. Tananyan,
Interval edge-colorings of graph products, in: Proceedings of the
CSIT Conference (2011) 89-92.

\bibitem{b28} P.A. Petrosyan, H.H. Khachatrian, Interval non-edge-colorable
bipartite graphs and multigraphs, J. Graph Theory 76 (2014) 200-216.

\bibitem{b29} S.V. Sevast'janov, Interval colorability of the edges of a
bipartite graph, Metody Diskret. Analiza 50 (1990) 61-72 (in
Russian).

\bibitem{b30} V.G. Vizing, On an estimate of the chromatic class of a $p$-graph,
Disc. Analiz 3 (1964) 25-30 (in Russian).

\bibitem{b31} V.G. Vizing, The chromatic class of a multigraph, Kibernetika 3
(1965) 29-39 (in Russian).

\bibitem{b32} D. de Werra, Ph. Solot, Compact cylindrical chromatic scheduling,
SIAM J. Disc. Math, Vol. 4, N4 (1991) 528-534.

\bibitem{b33} D.B. West, Introduction to Graph Theory, Prentice-Hall, New
Jersey, 2001.
\end{thebibliography}
\end{document}